\numberwithin{equation}{section}
\definecolor{lbcolor}{rgb}{0.9,0.9,0.9}
\theoremstyle{plain}
\newtheorem{theorem}{Theorem} \numberwithin{theorem}{section}
\newtheorem{prop}[theorem]{Proposition}
\newtheorem{lemma}[theorem]{Lemma}
\theoremstyle{definition}
\newtheorem{remark}[theorem]{Remark}
\newtheorem{definition}[theorem]{Definition}
\newtheorem{assumption}[theorem]{Assumption}
\definecolor{mydarkblue}{rgb}{0,0.08,0.45}
\newcommand\Eb{\mathbb{E}}
\newcommand\Pb{\mathbb{P}}
\newcommand\Rb{\mathbb{R}}
\newcommand{\zerobf}[0]{\bm{0}}
\newcommand{\Abf}[0]{\bm{A}}
\newcommand{\ybf}[0]{\bm{y}}
\newcommand{\Jc}{\mathcal{J}}
\newcommand{\range}[0]{\operatorname{range}}
\newcommand{\orth}[0]{\operatorname{orth}} 
\DeclareMathOperator*{\argmin}{arg\,min}
\newcommand{\Uniform}{\operatorname{Uniform}}
\DeclareMathOperator*{\Ind}{\operatorname{Ind}}
\newcommand{\defeq}{\stackrel{\text{\tiny \textnormal{def}}}{=}}  
\definecolor{mypink1}{rgb}{0.858, 0.188, 0.478}
\definecolor{mypink2}{RGB}{219, 48, 122}
\definecolor{mypink3}{cmyk}{0, 0.7808, 0.4429, 0.1412}
\definecolor{mygray}{gray}{0.6}
\newlength{\leftstackrelawd}
\newlength{\leftstackrelbwd}
\def\leftstackrel#1#2{\settowidth{\leftstackrelawd}%
{${{}^{#1}}$}\settowidth{\leftstackrelbwd}{$#2$}%
\addtolength{\leftstackrelawd}{-\leftstackrelbwd}%
\leavevmode\ifthenelse{\lengthtest{\leftstackrelawd>0pt}}%
{\kern-.5\leftstackrelawd}{}\mathrel{\mathop{#2}\limits^{#1}}}
\def\R{{\mathbb R}}
\def\PP{{\mathcal P}}
\def\N{{\mathbb N}}
\def\E{{\mathbb E}}
\def\R{{\mathbb R}}
\def\P{{\mathbb P}}
\def\V{{\mathbb V}}
\def\cor{{\textup{corr}}}
\def\bfqs{{\mathsf {BFB}}}
\def\e{{\varepsilon}}
\def\u{{\mathbf{u}}}
\newcommand{\bs}[1]{\boldsymbol{#1}}
\newlength{\dhatheight}
\newcommand{\doublehat}[1]{%
    \settoheight{\dhatheight}{\ensuremath{\hat{#1}}}%
    \addtolength{\dhatheight}{-0.35ex}%
    \hat{\vphantom{\rule{1pt}{\dhatheight}}%
    \smash{\hat{#1}}}}
\title{Quadrature Sampling of Parametric Models with Bi-fidelity Boosting}
\author[1]{Nuojin Cheng\thanks{Equal contribution.}}
\author[2]{Osman Asif Malik$^*$}
\author[3]{Yiming Xu$^*$}
\author[1]{Stephen Becker}
\author[4]{Alireza Doostan}
\author[5]{Akil Narayan}
\affil[1]{Department of Applied Mathematics, University of Colorado Boulder (\href{mailto:nuojin.cheng@colorado.edu}{nuojin.cheng@colorado.edu}, \href{mailto:stephen.becker@colorado.edu}{stephen.becker@colorado.edu})}
\affil[2]{Applied Mathematics \& Computational Research Division, Lawrence Berkeley National Laboratory (\href{mailto:oamalik@lbl.gov}{oamalik@lbl.gov})}
\affil[3]{Corporate Model Risk, Wells Fargo (\href{mailto:yiming.xu@wellsfargo.com}{yiming.xu@wellsfargo.com})}
\affil[4]{Smead Aerospace Engineering Sciences Department, University of Colorado Boulder (\href{mailto:alireza.doostan@colorado.edu}{alireza.doostan@colorado.edu})}
\affil[5]{Scientific Computing and Imaging Institute, and Department of Mathematics, University of Utah (\href{mailto:akil@sci.utah.edu}{akil@sci.utah.edu})}
\date{}
\begin{document}

\maketitle

\begin{abstract}

Least squares regression is a ubiquitous tool for building emulators (a.k.a.\ surrogate models) of problems across science and engineering for purposes such as design space exploration and uncertainty quantification. When the regression data are generated using an experimental design process (e.g., a quadrature grid) 
involving computationally expensive models, or when the data size is large, sketching techniques have shown promise to reduce the cost of the construction of the regression model while ensuring accuracy comparable to that of the full data. However, random sketching strategies, such as those based on leverage scores, lead to regression errors that are random and may exhibit large variability. To mitigate this issue, we present a novel boosting approach that leverages cheaper, lower-fidelity data of the problem at hand to identify the {\it best} sketch among a set of candidate sketches. This in turn specifies the sketch of the intended high-fidelity model and the associated data. We provide theoretical analyses of this bi-fidelity boosting (BFB) approach and discuss the conditions the low- and high-fidelity data must satisfy for a successful boosting. In doing so, we derive a bound on the residual norm of the BFB sketched solution relating it to its ideal, but computationally expensive, high-fidelity boosted counterpart. Empirical results on both manufactured and PDE data corroborate the theoretical analyses and illustrate the efficacy of the BFB solution in reducing the regression error, as compared to the non-boosted solution. 

\end{abstract}

\section{Introduction}\label{sec:intro}

Computational models are becoming central tools in analysis, design, and prediction. In these models, input parameters are often modeled as a random vector $\bs{p}$ to account for either uncertainty in precise values of these parameters, or as a means to model variability of parameters in order to assess robustness of an output \cite{le2010spectral,smith2013uncertainty}. We consider such types of models given a (possibly non-linear) parameter-to-output map,
\begin{align*}
  b &= \mathcal{T}(\bs{p}), & \mathcal{T}: \Rb^q \rightarrow \R.
\end{align*}
A canonical example is when $\mathcal{T}$ is a measurement functional (e.g., the spatial average) operating on the solution to an elliptic partial differential equation (PDE) whose formulation contains random variables that, e.g., parameterize the diffusion coefficient. Hence, $\mathcal{T}$ is the composition of a measurement functional with the solution map of a parametric PDE. By placing a probability distribution on $\bs{p}$ that reflects a model of uncertainty, the goal of forward uncertainty quantification (UQ) is to quantify the resulting randomness in $b(\bs{p})$, 
frequently via statistics.
Since explicit formulas revealing the dependence of $b$ on $\bs{p}$ are typically not available, one resorts to approximations. One such
sampling-based 
approach that we focus on is that of polynomial chaos (PC) methods \cite{ghanem2003stochastic,xiuwiener--askey2002} using variants of stochastic collocation \cite{xiuhigh-order2005}. 

In this paper we consider building emulators for forward UQ via a non-intrusive least squares-based PC strategy. More precisely, we assume an \textit{a priori} form for an emulator $b_V$:
\begin{align}
\label{eq:function-approximation}
  b(\bs{p}) \approx b_V(\bs{p}) &\coloneqq \sum_{j=1}^d x^\ast_j \psi_j(\bs{p}), & V &\coloneqq \mathrm{span}\{\psi_1, \ldots, \psi_d\},
\end{align}
where $\psi_j$ are fixed, known functions (in PC approaches they are multivariate polynomial functions of $\bs{p}$), and the coefficients $x^\ast_j$ must be determined. We identify these coefficients through data collected from evaluating $b$ on a prescribed quadrature rule $\{(\bs{p}_n, w_n)\}_{n =1}^N$, with quadrature nodes $\bs{p}_n$ and positive weights $w_n$. The coefficients $x^\ast_j$ are then chosen as the solution to a quadrature-based least squares problem,
\begin{align}\label{myleastsquares}
  \bs{x}^\ast &= \argmin_{\bs{x} \in \R^d} \| \bs{A} \bs{x} - \bs{b} \|_2^2, & \bm A(n,j) &= \sqrt{w}_n \psi_j(\bs{p}_n), & \bm b(n) &= \sqrt{w}_n b(\bs{p}_n),
\end{align}
where $\bs{A}\in\R^{N\times d}$ is referred to as the \emph{design matrix} of the problem. 
Once $\bs{x}^\ast$ is computed, the emulator $b_V$ is easily manipulated and computationally analyzed to compute (approximate) statistics for $b$ or the sensitivity of $b$ to each entry of $\bs{p}$. The challenge with this approach is that when $\dim \bs{p} = q \gg 1$, then designing an appropriately accurate quadrature rule requires $N \gg 1$ samples of $b$, which is prohibitively expensive when such evaluations amount to PDE solutions. (For example a $q$-dimensional tensorized Gaussian quadrature rule with $n$ points per dimension requires $N = n^q$ points.) 

In this paper, we describe one strategy to mitigate this cost via a procedure that combines statistical boosting ideas from theoretical computer science (see, e.g., \citep[Sec.~7.2]{mahoney2011RandomizedAlgorithms} and \citep[Sec.~2.3]{woodruff2014SketchingTool}) with bi-fidelity strategies in UQ. More precisely, our approach boosts on the randomness of a \textit{sketching} operator $\bs{S} \in \R^{m \times N}$ that is used to approximately solve \eqref{myleastsquares}:
\begin{align*}
  \doublehat{\bs{x}} &= \argmin_{\bs{x} \in \R^d} \| \bs{S} \bs{A} \bs{x} - \bs{S} \bs{b} \|_2^2.
\end{align*}
Without \textit{a priori} knowledge of $\bs{b}$, a deterministic sketch with $m < N$ generally is not robust to adversarial vectors $\bs{b}$ that result in a large residual for $\doublehat{\bs{x}}$ relative to the residual for $\bs{x}^\ast$. However, in general scenarios one can identify constructive \textit{probabilistic} models for $\bs{S}$ where sketches of near-optimal size, $m \gtrsim d\log d/(\epsilon \delta)$, ensure
\begin{align*}
  \|\bs{A} \doublehat{\bs{x}} - \bs{b} \|^2 \leq (1 + \epsilon) \|\bs{A} \bs{x}^\ast - \bs{b} \|^2 \hskip 5pt \textrm{with probability } \geq 1- \delta.
\end{align*}
We provide a more detailed discussion of existing sketching guarantees in section \ref{ssec:sketching}, in particular for \textit{row sketches}, for which computing $\bs{S} \bs{b}$ requires knowledge of only $m$ entries of $\bs{b}$, rather than all $N$ entries. While random sketching provides attractive guarantees when $m \ll N$, it is still random and hence is subject to randomness in performance, and ``failure'' events can occur with nonzero probability $\delta$. Naive statistical boosting mitigates this issue by generating several (say $L$) sketches and choosing the one that yields the smallest residual. However, this requires generating $L m$ entries of $\bs{b}$, which can be computationally expensive when each evaluation is an expensive PDE solve. Our approach attacks this problem in the sketch selection boosting phase by replacing $\bs{b}$ with an approximate, low-fidelity version from which collecting $L m$ samples is computationally feasible. Once a ``good'' sketch is identified in the boosting phase, we solve the sketched least squares problem using the corresponding sketch of the original data $\bs{b}$.

Thus, we assume availability of and leverage a \textit{low-fidelity} model $\widetilde{b}(\bs{p})$. For example, $\widetilde{b}$ may correspond to using a discretized PDE solver with a mesh coarser than the one which produces accurate realizations of $b$, or to model approximations such as Reynolds-averaged Navier Stokes solvers, or to solutions computed with arithmetic in lower precision compared to samples for $b$. Although $\widetilde{b}$ may be untrusted as a replacement for $b$, it can be used to extract some useful information about $b$, as is done in by-now standard multi-fidelity approaches \cite{peherstorfersurvey2018}. Throughout this paper, we assume the bi-fidelity setup, i.e., two levels of fidelity, and also that the cost of evaluating $\widetilde{b}$ is much less than the corresponding cost for $b$; both of these are common practical assumptions \cite{doostanstochastic2007,narayanstochastic2014,zhucomputational2014,fairbanks2020bi,newberry2022bi}.

\subsection{Contributions of this article}
The contributions of this article are as follows:
\begin{itemize}
  \item We propose a new bi-fidelity boosting  (BFB) algorithm to compute an approximation to $\bs{x}^\ast$. The procedure, given in Algorithm \ref{alg:BFQS}, computes the solution of a \textit{sketched} least squares problem, where the sketch matrix is identified by a boosting procedure on a low-fidelity data vector $\widetilde{\bs{b}}$. The sketching approach reduces the required sample complexity from $N$ evaluations of $b$ to $\sim d \log d$ samples of $b$, which can be a significant saving. The boosting procedure requires $\sim L d \log d$ evaluations of the low-fidelity model $\widetilde{b}$, where, in the language of statistical learning, $L$ is the number of weak learners used in the boosting procedure. When $\widetilde{b}$ costs substantially less than $b$, this cost for collecting the boosting data is negligible.
  
  \item We provide a theoretical analysis of BFB under certain assumptions, which provides quantitative bounds on the residual of the BFB solution $\hat{\bs{x}}_{\bfqs}$ relative to the full, computationally expensive solution $\bs{x}^\ast$ (see Theorems~\ref{thm:optimality} and \ref{thm:BFQS-error}).
  We also provide some asymptotic bounds on the correlation between the low- and high-fidelity solutions in a certain sense (see Theorem~\ref{thm:Gaussian}).
  Finally, we provide concrete computational strategies to ensure that the required assumptions of BFB hold (see Theorem~\ref{thm:sketches}). 
  
  \item We investigate the numerical performance of BFB when combined with several different sampling strategies and compare the performance to the corresponding sampling strategies without boosting.
  We also demonstrate using real-world problems that the assumptions required for BFB's theoretical analysis frequently hold in practice.
\end{itemize}

The idea of sketching for least squares solutions has a substantial history in the computer science and numerical linear algebra communities \cite{mahoney2011RandomizedAlgorithms, woodruff2014SketchingTool}. Our use of sparse row sketches of size $\sim d$ is identical to existing methods for leverage score-based \cite{mahoney2011RandomizedAlgorithms}, Gaussian-sketch based \cite{martinsson2020RandNLA}, and volume-maximizing sketching \cite{derezinski2018ReverseIterative,derezinski2018LeveragedVolume}. In addition, boosting for least squares problems is also not a new idea \cite{haberstichboosted2022}. However our combination of these approaches in a bi-fidelity setting is new to our knowledge, and our analysis in this bi-fidelity context provides novel, non-trivial insight into the algorithm performance.

The rest of this manuscript is organized as follows. 
Section~\ref{sec:preliminaries} introduces the notation we use and provides some background material on various sketching approaches in least squares approximation.
Section~\ref{sec:method} presents the BFB algorithm along with its theoretical analysis. 
Section \ref{sec:numerical-experiments} contains numerical experiments which illustrate various aspects of the BFB approach. 
We conclude the present study in Section \ref{sec:conclusion}.
The paper also contains several appendices.
Appendix~\ref{sec:lev-score-sampling-alg} provides a brief introduction to the sampling approach that we proposed in \citep{malik2022FastAlgorithms} and which we make use of in this paper.
Appendices~\ref{app:a} and \ref{app:a0} contain some proofs that have been left out of the main text.

\section{Preliminaries} \label{sec:preliminaries}

For the interest of clarity and completeness, we next introduce the notation used throughout the manuscript and introduce four sampling strategies to sketch the least squares problem (\ref{myleastsquares}), namely, sampling via column-pivoted QR, leverage scores, volume maximization, and Gaussian distribution.

\subsection{Notation} \label{sec:preliminaries-notation}

Matrices are denoted by bold upper-case letters (e.g., $\bs{A}$), vectors are denoted by bold lower-case letters (e.g., $\bs{x}$) and scalars by lower case regular and Greek letters (e.g., $a$ and $\alpha$).
Entries of matrices and vectors are indicated in parentheses. 
For example, $\bs{A}(i,j)$ is the entry on position $(i,j)$ in $\bs{A}$ and $\bs{a}(i)$ is the $i$th entry in $\bs{a}$.
A colon is used to denote all entries along a mode of a matrix. 
For example, $\bs{A}(i,:)$ is the $i$th row of $\bs{A}$ represented as a row vector.
For a set of indices $\Jc$, $\bs{A}(\Jc,:)$ denotes the submatrix $(\bs{A}(j, :))_{j \in \Jc}$ and $\bs{a}(\Jc)$ denotes the subvector $(\bs{a}(j))_{j \in \Jc}$. 

The \emph{compact SVD} of a matrix $\bs{A}$ takes the form $\bs{A} = \bs{U} \bs{\Sigma} \bs{V}^\top$, where $\bs{U}$ and $\bs{V}$ have $\rank(\bs{A})$ columns and $\bs{\Sigma}$ is of size $\rank(\bs{A}) \times \rank(\bs{A})$.
The \emph{pseudoinverse} of $\bs{A}$ is denoted by $\bs{A}^\dagger \defeq \bs{V} \bs{\Sigma}^{-1} \bs{U}^\top$.
For a matrix $\bs{U}$ with orthonormal columns, we use $\bs{U}_\perp$ to denote an orthonormal complement of $\bs{U}$, i.e., $\bs{U}_\perp$ is any matrix such that $[\bs{U},\;\; \bs{U}_\perp]$ is square and has orthonormal columns. We use $\bs{P}_{\bs{A}} \defeq \bs{A} \bs{A}^\dagger = \bs{U} \bs{U}^\top$ to denote the \emph{orthogonal projection} onto $\range(\bs{A})$,
where $\bs{U}=\orth(\bs A)$ is a(ny) matrix whose columns are an orthonormal basis for $\range(\bs A)$, e.g., via  the compact SVD or QR decomposition of $\bs A$. The determinant of $\bs{A}$ is denoted by $\det(\bs{A})$.
For a positive integer $n$, we use the notation $[n] \defeq \{1,2,\ldots,n\}$.
We use $\bs{a}_{\PP}$ to denote a vector $\bs{a} \neq \zerobf$ rescaled to unit length:
\begin{equation}
	\bm a_\PP = \frac{\bm a}{\|\bm a\|_2}.
\end{equation} 

We also introduce two notions of correlation: for given deterministic vectors $\bs{a}, \bs{b} \neq \bs{0}$, we define the correlation between them as the cosine of the angle separating them:
\begin{align*}
  \cor(\bs{a}, \bs{b}) \defeq \frac{\left\langle \bs{a}, \bs{b} \right\rangle}{\|\bs{a}\|_2\|\bs{b}\|_2},
\end{align*}
where $\langle\cdot,\cdot\rangle$ denotes the Euclidean inner product. We will also require Pearson's correlation coefficient, which is widely used in statistics.
For two (non-constant) random variables $X$ and $Y$ with bounded second moments defined on the same probability space, their correlation is defined as 
\begin{equation}
	\cor(X, Y) \defeq \frac{\E[(X-\E[X])(Y-\E[Y])]}{\sqrt{\V[X]\V[Y]}},
\end{equation} 
where $\E[\cdot]$ and $\V[\cdot]$ are, respectively, the mathematical expectation and variance operators. 
Note that our notation $\cor(\cdot,\cdot)$ is overloaded, operating differently on vectors and (random) scalars. The context of use in what follows should make it clear which definition above is used.

We will use the following notation to denote the minimum of the least squares objective in \eqref{myleastsquares}:
\begin{equation} \label{eq:r-definition}
	r(\bs{A}, \bs{b}) 
	\defeq \min_{\bs{x}} \| \bs{A} \bs{x} - \bs{b} \|_2 
	= \| \bs{A} \bs{x}^* - \bs{b} \|_2,
\end{equation}
where $\bs{x}^*$ is defined as in \eqref{myleastsquares}.

\subsection{Sketching of least squares problems}\label{ssec:sketching}

Solving the problem \eqref{myleastsquares} using standard methods (e.g., via the QR decomposition) costs $\mathcal{O}(Nd^2)$\footnote{In our context, we have $N > d$; see Assumption \ref{ass:b-range}.}.
When $N$ is large, this may be prohibitively expensive.
A popular approach to address this issue is to apply a \emph{sketch operator} $\bs{S} \in \R^{m \times N}$ where $m \ll N$ to both $\bs{A}$ and $\bs{b}$ in \eqref{myleastsquares} in order to reduce the size of the problem:
\begin{equation} \label{eq:xast-def}
  \hat{\bs{x}} \defeq \argmin_{\bs{x} \in \Rb^d} \left\| \bs{S} \bs{A} \bs{x} - \bs{S} \bs{b} \right\|_2.
\end{equation}
This approach has two benefits: (i) If $\bs{S}$ is a row-sketch, i.e., has only a small number of non-zero columns, then $\bs{S} \bs{b}$ requires knowledge of only a small number of entries of $\bs{b}$, and (ii) the cost of solving this smaller problem is $\mathcal{O}(md^2)$, a substantial reduction from $\mathcal{O}(Nd^2)$ when $m \ll N$.
Analogously to \eqref{eq:r-definition}, we will use the following to denote the least squares objective value for the approximate solution:
\begin{equation} \label{eq:r_S}
	r_{\bs{S}}(\bs{A}, \bs{b}) \defeq \| \bs{A} \hat{\bs{x}} - \bs{b} \|_2.
\end{equation}
The goal is for the approximation $\hat{\bs{x}}$ to yield a residual ``close'' to the optimal residual of the full problem \eqref{myleastsquares},
\begin{equation} \label{eq:rS-def}
  r(\bs{A}, \bs{b}) \approx r_{\bs{S}}(\bs{A}, \bs{b}) ,
\end{equation}
which is typically achieved if $m$ is ``large enough''. 
The following definition makes this more precise.
\begin{definition}[$(\e,\delta)$ pair condition] \label{def:epsilon-delta-condition}
	Let $\bm S\in\R^{m\times N}$ be a random matrix. 
	Given $\bm A\in\R^{N\times d}$, $\bm b \in\R^N$, and $\e, \delta>0$, the distribution of $\bm S$ is said to satisfy an $(\e, \delta)$ pair condition for $(\bm A, \bm b)$ if, with probability at least $1-\delta$, both conditions,
	\begin{equation} \label{mypair}
          \mathrm{rank}(\bm S \bm A) = \mathrm{rank}(\bm A) \hskip 10pt \textrm{and} \hskip 10pt r_{\bs{S}} (\bs{A}, \bs{b}) \leq (1+\varepsilon) \, r(\bs{A}, \bs{b}),
	\end{equation}
        hold simultaneously, where $r(\bs{A}, \bs{b})$ and $r_{\bs{S}} (\bs{A}, \bs{b})$ are defined as in \eqref{eq:r-definition} and \eqref{eq:r_S}, respectively.
\end{definition}

Note that one can only ask for the above condition with probability less than 1: For any sketch with $m < N$, there are vectors $\bs{b}$ for which the residual bound condition in \eqref{mypair} can be violated. Such a condition can be satisfied with $m < N$ samples; see sections \ref{sssec:lev_score_sampling}, \ref{sec:leveraged-volume-sampling}, and \ref{sec:gaussian-sketch}. Sketching operators $\bs{S}$ that sample a subset of the rows are of particular
interest in UQ since $\bs{S} \bs{b}$ in \eqref{eq:xast-def} then requires knowledge of only a subset of entries in the vector $\bs{b}$, meaning that fewer samples need to be collected.
In this paper, we consider three different sketching operators of this type, one of which is deterministic and two of which are random.
These are described in Sections~\ref{sssec:qr_sampling}--\ref{sec:leveraged-volume-sampling}.
Another popular sketching operator is the Gaussian sketching operator whose entries are appropriately scaled i.i.d.\ normal random variables.
Applying such a random matrix to $\bs{b}$ requires knowledge of all entries in $\bs{b}$.
While this makes the Gaussian sketch unsuitable for use in practice for quadrature sampling, we still consider it in some of our theoretical results since it is easier to analyze than the sampling-based sketches. Furthermore, since it is known to have excellent guarantees, it provides a nice baseline.
We introduce the Gaussian sketch in Section~\ref{sec:gaussian-sketch}.

Much research has been conducted over the last two decades on randomized algorithms in numerical linear algebra, including the problem of solving least squares problems.
We only cover the basics that are relevant for this paper.
For a more in-depth discussion, we refer the reader to the surveys in \cite{halko2011FindingStructure, mahoney2011RandomizedAlgorithms, woodruff2014SketchingTool, martinsson2020RandNLA} and the references therein.

\subsubsection{Sampling via column-pivoted QR decomposition} \label{sssec:qr_sampling}

Let $\bs{A}^\top \bs{P} = \bs{A}(\mathcal{J}, :)^\top = \bs{Q} \bs{R}$ be a column-pivoted QR (CPQR) decomposition where $\mathcal{J}$ is a length-$N$ permutation vector.
A simple deterministic heuristic for sampling $m$ rows from $\bs{A}$ is to simply choose those rows corresponding to the first $m$ entries in $\mathcal{J}$, i.e., $\bs{A}(\mathcal{J}(1:m), :)$.
This corresponds to applying a sketch $\bs{S} = (\bs{P}(:, 1:m))^\top$ to $\bs{A}$.
Such an approach has been used to sub-sample points from either tensor product quadratures~\cite{seshadri2017EffectivelySubsampled} or from random samples (approximate D-optimal design)~\cite{hadigol2018least,diaz2018sparse,guo_weighted_2018} in the context of least squares polynomial approximation. 

Recall that $\bs{A}$ is an $N \times d$ tall-and-skinny matrix.
When $m\le d$, the subsample is straightforward and just takes the first $m$ entries in $\mathcal J$ since the list $\mathcal J$ contains the entries in decreasing order of importance (as approximated by the column-pivoting algorithm). When $m>d$, the situation is more subtle since 
the remaining entries $\mathcal{J}(d+1:N)$ 
have no particular meaning and will 
not be useful in our row-sampling procedure.
To get around this, we use the heuristic in Algorithm~\ref{alg:QR-sampling-heuristic} in order to sample $m > d$ rows.
The heuristic chooses the first $d$ rows indices to be the entries in $\mathcal{J}(1:m)$ where $\mathcal{J}$ comes from the column-pivoted QR decomposition of $\bs{A}^\top$.
The rows with indices in $\mathcal{J}(1:m)$ are then removed from $\bs{A}$.
Another column-pivoted QR decomposition is then computed for the updated $\bs{A}^\top$, and the next set of $d$ rows is chosen to be the rows of $\bs{A}$ corresponding to the top-$d$ entries in the new permutation vector $\mathcal{J}$.
Once again, the chosen rows are removed from $\bs{A}$.
This procedure is repeated until $m$ rows have been chosen.
It is straightforward to formulate a sampling matrix $\bs{S}$ such that $\bs{S}\bs{A} = \bs{A}_s$, where $\bs{A}_s$ is the output of Algorithm~\ref{alg:QR-sampling-heuristic}. 

\begin{algorithm}[ht]
	\caption{Heuristic for sampling via column-pivoted QR decomposition\label{alg:QR-sampling-heuristic}}
	\KwIn{$\bs{A}$: design matrix; $m$: desired number of row samples}
	\KwOut{$\bs{A}_s$: matrix containing $m$ rows of $\bs{A}$}
	\begin{algorithmic}[1]
		
		\STATE Initialize $\bs{A}_s$ to an empty matrix: $\bs{A}_s = [\,]$
		\WHILE{$m > 0$}{
			\STATE Compute column-pivoted QR of $\bs{A}^\top$: $\bs{A}(\mathcal{J}, :)^\top = \bs{Q} \bs{R}$
			\STATE Let $k = \min(d,m)$
			\STATE Append top-$k$ rows from $\bs{A}$ to $\bs{A}_s$: $\bs{A}_s = [\bs{A}_s; \, \bs{A}(\mathcal{J}(1:k), :)]$
			\STATE Remove top-$k$ rows from $\bs{A}$: $\bs{A} = \bs{A}(\mathcal{J}(k+1:\text{end}), :)$
			\STATE $m = m - k$
		}
		\ENDWHILE
		\RETURN $\bs{A}_s$
	\end{algorithmic}
\end{algorithm}

Since the approach in Algorithm~\ref{alg:QR-sampling-heuristic} is deterministic, it cannot satisfy guarantees of the form in Definition~\ref{def:epsilon-delta-condition}.
However, for the case $m=d$ it is possible to prove bounds on the condition number of $\bs{A}(\mathcal{J}(1:d),:)$; see Lemma~2.1 in \citep{seshadri2017EffectivelySubsampled} for details.

\subsubsection{Leverage score sampling} \label{sssec:lev_score_sampling}

Let $\bs{A} = \bs{U} \bs{\Sigma} \bs{V}^\top$ be a compact SVD.
The \emph{leverage scores} of $\bs{A}$ are defined as 
\begin{equation} \label{eq:leverage-scores-definition}
	\ell_i(\bs{A}) \defeq \|\bs{U}(i,:)\|_2^2 \;\;\;\; \text{for } i \in [N].
\end{equation}
They take values in the range $\ell_i(\bs{A}) \in [d/N, 1]$ and indicate how important each row of $\bs{A}$ is in a certain sense.
The matrix $\bs{U}$ can be replaced with any matrix whose columns form an orthonormal basis for $\range(\bs{A})$ without impacting the definition in \eqref{eq:leverage-scores-definition} \citep[Sec.\ 2.4]{woodruff2014SketchingTool}.
The \emph{coherence} of $\bs{A}$ is defined as
\begin{equation} \label{eq:coherence}
	\gamma(\bs{A}) \defeq \max_{i \in [N]} \ell_i(\bs{A}).
\end{equation}
It takes values in the range $\gamma(\bs{A}) \in [d/N, 1]$; it is maximal when one of the leverage scores is 1 and minimal when all leverage scores are equal to $d/N$.
Let $r\defeq \sum_i \ell_i(\bs{A})$. The leverage score sampling distribution of $\bs{A}$ is defined as
\begin{equation}
	p_i(\bs{A}) \defeq \frac{\ell_i(\bs{A})}{r} \;\;\;\; \text{for } i \in [N],
\end{equation}
which is indeed a probability distribution as $\ell_i(\bs{A})>0$. Let $f : [m] \rightarrow [N]$ be a random map such that each $f(j)$ is independent and $\Pb\{ f(j) = i \} = p_i(\bs{A})$ for each $j \in [m]$.
The leverage score sampling sketch $\bs{S} \in \Rb^{m \times N}$ is defined elementwise via
\begin{equation} \label{eq:lev-score-sketch}
	\bs{S}_{ji} = \frac{\Ind\{ f(j) = i \}}{\sqrt{m p_{f(j)} (\bs{A})}} \;\;\;\; \text{for } (j,i) \in [m] \times [N],
\end{equation}
where $\Ind \{A\}$ is the indicator function which is 1 if the random event $A$ occurs and zero otherwise.
Algorithms and theory for leverage score sampling have been developed in a number of papers; see e.g., \citep{drineas2006SamplingAlgorithms, drineas2008RelativeerrorCUR, drineas2011FasterLeast, mahoney2011RandomizedAlgorithms, larsen2020PracticalLeverageBased} and references therein.
The distribution for the leverage score sketch in \eqref{eq:lev-score-sketch} satisfies an $(\varepsilon, \delta)$ condition for $(\bs{A}, \bs{b})$ if 
\begin{equation} \label{eq:leverage-score-sampling-complexity}
	m \gtrsim d \log(d / \delta) + d / (\varepsilon \delta);
\end{equation}
see Theorem~\ref{thm:sketches} for a more detailed and slightly stronger statement.

Choosing $p_i(\bs{A}) = 1/N$ results in \emph{uniform sampling}.
For general matrices, there are no useful guarantees when sampling uniformly in this fashion.
However, if $\bs{A}$ has low coherence, then uniform sampling will be close to the leverage score sampling distribution and guarantees similar to those for leverage score sampling hold.
More precisely, if $\ell_i(\bs{A}) \leq C d/N$ for some constant $C \geq 1$, then uniform sampling satisfies an $(\varepsilon, \delta)$ condition for $(\bs{A}, \bs{b})$ if $m$ is chosen as in \eqref{eq:leverage-score-sampling-complexity} (this is a direct consequence of, e.g., Theorem~6 in \citep{larsen2020PracticalLeverageBased}). 
Notice that the difference from sampling according to the exact leverage scores is that there now is an additional constant $C$ hidden in the lower bound on $m$.

In addition to a parsimonious sampling of $\bs{b}$, the computational complexity of the sketched least squares approach in \eqref{eq:xast-def} is a consideration. Direct sampling of the leverage score distribution via the formula \eqref{eq:leverage-scores-definition} requires a matrix decomposition (e.g., QR or SVD), which costs $\mathcal{O}(Nd^2)$ effort, the same effort required to solve the original least squares problem.
\citet{drineas2012FastApproximation} propose a procedure for computing leverage score estimates with cost $\mathcal{O}(N d \log N)$ for any matrix $\bs{A}$.
When $\bs{A}$ has particular structure it is possible to improve this considerably.
\citet{malik2022FastAlgorithms} propose such a method for the case when the multivariate basis functions $\psi_j$ in \eqref{eq:function-approximation} are certain products of one-dimensional functions, which corresponds to impose certain structure on the subspace $V$.
In the polynomial approximation setting, those structural conditions are satisfied by a large family of subspaces, including the popular tensor product, total degree, and hyperbolic cross spaces.
For example, if the multivariate basis polynomials for $q$-dimensional inputs correspond to polynomials of at most degree $k$ in each dimension and use $n$ grid points per dimension (in which case $\bs{A}$ has $N=n^q$ rows), then the total cost of our method is at most $\mathcal{O}(q n k^2 + m q)$ for drawing $m$ samples.
This sampling approach is an ingredient in our method, so we describe the key aspects of how this sampling approach works in Appendix~\ref{sec:lev-score-sampling-alg} and refer the reader to \citep{malik2022FastAlgorithms} for a more comprehensive treatment.

\subsubsection{Leveraged volume sampling} \label{sec:leveraged-volume-sampling} \label{sssec:lev_vol_sampling}

Volume sampling is a technique that samples a set $\Jc \subset [N]$ of $m$ row indices of $\bs{A}$ with probability proportional to the squared volume of the parallelepiped spanned by the columns of the submatrix $\bs{A}(\Jc,:)$, i.e., $\Pb(\Jc) \propto \det(\bs{A}(\Jc,:)^\top \bs{A}(\Jc,:))$.
This means that, unlike for leverage score sampling, the rows are not sampled independently. 
This has several benefits, including that the sketched least square solution $\bs{A}(\Jc,:)^\dagger \bs{b}(\Jc)$ is correct in expectation \citep[Prop.~7]{derezinski2017UnbiasedEstimates}: $\Eb[\bs{A}(\Jc,:)^\dagger \bs{b}(\Jc)] = \bs{A}^\dagger \bs{b}$. 
Leverage score sampling, by contrast, may produce a biased estimate of the solution vector.
Despite the apparent issue of sampling from a combinatorial number of subsets of $[N]$, there are algorithms for volume sampling that run in polynomial time.
\citet{derezinski2018ReverseIterative} propose two such algorithms, RegVol and FastRegVol. 
RegVol runs in $\mathcal O((N-m+d)Nd)$ time, and FastRegVol runs in $\mathcal O((N+\log(N/d) \log(1/\delta))d^2)$ time with probability at least $1-\delta$.
The dependence on $N$ can be prohibitive in quadrature sampling since the number of (tensor-product) quadrature points $N$ is exponential in the number of variables.

\citet{derezinski2018LeveragedVolume} propose \emph{leveraged} volume sampling which improves on standard volume sampling in several ways. 
Importantly, it still retains the correctness in expectation but allows for more efficient sampling.
In particular, the cost of sampling does not depend on $N$.
Unlike standard volume sampling, the sketch distribution satisfies an $(\varepsilon, \delta)$ condition for $(\Abf, \ybf)$ if $m \gtrsim d \log(d / \delta) + d / (\varepsilon \delta)$, which is on par with what leverage score sampling requires for such guarantees.
Leveraged volume sampling has two stages.
In the first stage, $\mathcal O(d^2)$ rows are chosen from $\bs{A}$ using a combination of leverage score sampling and rejection sampling.
After that, the $\mathcal O(d^2)$ subset is further reduced to $\mathcal{O}(d \log(d / \delta) + d / (\varepsilon \delta))$ via standard volume sampling.
In the experiments, we use FastRegVol from \citep{derezinski2018ReverseIterative} for the second step.
When FastRegVol is used, the cost of leveraged volume sampling is $\mathcal O(((d^2+m) d^2 + m C_\text{samp}) \log(1/\delta))$, where $C_\text{samp}$ is the cost of drawing one row index of $\bs{A}$ using leverage score sampling.
As discussed in Section~\ref{sssec:lev_score_sampling}, the the cost $C_\text{samp}$ of leverage score sampling can be reduced drastically in our setting by using the structured sampling techniques from \cite{malik2022FastAlgorithms}.

\subsubsection{Gaussian sketching operator} \label{sec:gaussian-sketch}

The Gaussian sketching operator $\bs{S} \in \Rb^{m \times N}$ has entries that are i.i.d.\ Gaussian random variables with mean zero and variance $1/m$.
The Gaussian sketch satisfies an $(\varepsilon, \delta)$ condition if $m \gtrsim (d/\varepsilon) \log(d / \delta)$.
These results also extend to the case when the entries of $\bs{S}$ are sub-Gaussian; see Theorem~\ref{thm:sketches} for further details.

The main benefit of the Gaussian sketching operator is that it allows for simple and precise theoretical analysis of procedures that use sketching as a subroutine \citep[Remark~8.2]{martinsson2020RandNLA}.
This is our motivation for considering the Gaussian sketch in this paper. 
Computationally, it is not efficient to use Gaussian sketching for least squares problems.
The reason is that computing $\bs{S} \bs{A}$ costs $\mathcal{O}(mNd)$ which is more than the $\mathcal{O}(N d^2)$ cost of solving the original least squares problem (recall that $m > d$).
As discussed earlier, an additional issue in bi-fidelity estimation is that computing $\bs{S} \bs{b}$ requires knowledge of all elements of $\bs{b}$ which is prohibitively expensive when that vector contains high-fidelity data.

\subsection{Bi-fidelity problems}\label{ssec:bi-fidelity}

The main goal of this paper is to propose a strategy that improves the accuracy of sketching via a boosting procedure that employs a full vector $\tilde{\bs{b}}$ corresponding to an inexpensive low-fidelity approximation to $\bs{b}$.

Bi-fidelity frameworks assume the availability of a low-fidelity simulation $\widetilde{\mathcal{T}}$; that is, a map $\widetilde{\mathcal{T}}: \Rb^q \rightarrow \Rb$ such that $\widetilde{\mathcal{T}}$ is parameterically correlated with $\mathcal{T}$ in some sense, but need not be close to $\mathcal{T}$ in terms of sampled values. Such properties arise, for example, in parametric PDE contexts when $\widetilde{\mathcal{T}}$ arises as the discretized PDE solution operator on a spatial mesh that is coarser (and hence less trusted) than the mesh corresponding to $\mathcal{T}$. The decreased accuracy/trustworthiness of $\widetilde{\mathcal{T}}$ is balanced by its decreased cost, so that employment of $\widetilde{\mathcal{T}}$ may not furnish precise high-fidelity information, but may provide useful knowledge in terms of dependence on the parameter $\bs{p}$ with substantially reduced cost.

In the context of constructing our emulator \eqref{myleastsquares}, our core assumption is that the low-fidelity operator $\widetilde{\mathcal{T}}$ is cheap enough so that full exploration of the response over the sampled parameter set $\{\bs{p}_i\}_{i\in [N]}$ is more computationally feasible, resulting in a vector $\tilde{\bs{b}} \in \Rb^{N}$ with low-fidelity entries 
\begin{equation} \label{eq:vector-bt-construction}
	\tilde{\bs{b}}(n) = \sqrt{w_n} \widetilde{\mathcal{T}}(\bs{p}_n).
\end{equation}
Of course, one may propose constructing the emulator $\mathcal{T}$ in \eqref{myleastsquares} by simply replacing $\bs{b}$ by $\tilde{\bs{b}}$, but this restricts the accuracy of the emulator $\mathcal{T}$ to the potentially bad accuracy of $\widetilde{\mathcal{T}}$. 
In this paper, we propose a more sophisticated use of $\tilde{\bs{b}}$, in conjunction with a single sparse sketch of $\bs{b}$, that retains some accuracy characteristics of $\bs{x}^*$.

\section{Bi-fidelity boosting (BFB) in sketched least squares problems}\label{sec:method}

In practice, one often requires the probability of successfully obtaining a good approximation $\bs{x}^\ast$ associated with a random sketch from section \ref{ssec:sketching} to be sufficiently close to $1$, and one way to achieve this with fixed sketch size is through a boosting procedure. 
Assuming the availability of a collection of sketching matrices $\{\bs{S}_{\ell}\in\R^{m\times N}\}_{\ell \in [L]}$, one computes the residual for the $\bs{S}_\ell$-sketched solution (i.e., $\|\bs{A} (\bs{S}_\ell \bs{A})^\dagger (\bs{S}_\ell \bs{b})  - \bs{b}\|_2$) for each $\bm S_\ell$ and then selects the one that yields the smallest residual for use. 
Even if each sketch is sparse, this straightforward procedure inflates the required sampling cost of the forward model $\mathcal{T}$ by the factor $L$, which may be computationally prohibitive. 
To ameliorate this boosting cost, we employ a bi-fidelity strategy.

In Section~\ref{sec:proposed-algorithm} we present our proposed algorithm for quadrature sampling which leverages sketching BFB.
Sections \ref{ssec:theory-pre} and \ref{ssec:theory-asymp} give our pre-asymptotic and asymptotic analysis results, respectively. We collect some preliminary technical results in section \ref{sec:theoretical-analysis-prelims}, and prove our pre-asymptotic results in section \ref{sec:theoretical-analysis-optimality-coeff}. The asymptotic result is proven in Appendix \ref{app:a}. We end with section \ref{sec:sketch-guarantees} that provides results for random sketches achieving the $(\epsilon, \delta)$ condition in Definition \ref{def:epsilon-delta-condition}.

\subsection{Proposed algorithm} \label{sec:proposed-algorithm}
 
A distinguishing feature of the least squares problem in our setup is that full information of the high-fidelity data $\bm b$ is unaffordable due to computational restrictions; instead, we can only afford to generate a small number of entries of $\bs{b}$.
Meanwhile, the low-fidelity data vector $\tilde{\bm b}\in\R^N$ that exhibits some type of correlation with $\bm b$ is readily available for repeated use. (This correlation-like condition is quantifying through the parameter $\nu$ introduced in Theorem \ref{thm:optimality}.)
We propose a modified boosting procedure, where the boosting phase of a sketched least squares problem replaces high-fidelity data with low-fidelity data to find the ``best'' sketching operator and then employs this best sketch directly with high-fidelity data to compute an approximate least squares solution. 
This procedure is outlined in Algorithm~\ref{alg:BFB}.

\begin{algorithm}[ht]
		\DontPrintSemicolon
		\caption{Bi-Fidelity Quadrature Boosting (BFB)\label{alg:BFB}}
		\KwIn{design matrix $\bm A$,  low-fidelity vector $\tilde{\bm b}$, method for computing entries of the high-fidelity vector $\bm b$, collection of sketches for boosting $\{\bs{S}_{\ell}\}_{\ell\in [L]}$}
		\KwOut{an approximate solution $\hat{\bm x}_\bfqs$ to \eqref{myleastsquares}}
		\begin{algorithmic}[1]
			\FOR{$\ell\in [L]$}{
			\STATE compute the $\ell$-th sketched solution $\hat{\bm x}_\ell$ using the low-fidelity data: 
			\begin{equation}
				\hat{\bm x}_\ell = \argmin_{\bm x\in\R^d}\left\| \bm S_\ell\bs{A} \bs{x} - \bm S_\ell\tilde{\bs{b}} \right\|_2 
			\end{equation}
			}
			\ENDFOR
			\STATE find the best low-fidelity sketch index $\ell^*$ using boosting:
			\begin{equation}
				\ell^* = \argmin_{\ell\in [L]}\|\bm A\hat{\bm x}_\ell - \tilde{\bs{b}}\|_2			
			\end{equation} \label{line:check-low-fid-sketch}
			\STATE use sketch $\bm S_{\ell^*}$ to compute an approximate solution to \eqref{myleastsquares}: 
			\begin{equation}
				\hat{\bm x}_\bfqs = \argmin_{\bm x\in\R^d}\left\| \bm S_{\ell^*}\bs{A} \bs{x} - \bm S_{\ell^*}\bs{b} \right\|_2
				\;\;\;\; \text{\tcp*{Requires computing $m$ entries of $\bs{b}$}}
			\end{equation}
		\end{algorithmic}
		 
		\label{alg:BFQS}
\end{algorithm}

The oracle sketch in this scenario is the one identified by the boosting strategy operating directly on the high-fidelity least squares problem, which is computationally unaffordable:
\begin{equation} \label{eq:l_starstar}
  \ell^{**} = \argmin_{\ell\in [L]}\|\bm{A} \doublehat{\bm x}_\ell-\bm{b}\|_2^2, \;\;\;\; \text{where } \doublehat{\bm x}_\ell = \argmin_{\bm x\in\R^d}\|\bm S_\ell\bm A\bm x-\bm S_\ell\bm b\|_2.
\end{equation}
In the coming sections 
we will theoretically investigate the sketch transferability between high- and low-fidelity boosting, i.e., when the residual associated to $\hat{\bm x}_\bfqs$, the solution produced by Algorithm \ref{alg:BFQS}, is comparable to the residual associated to $\hat{\bm x}_{\ell^{**}}$.

We divide our analysis into two cases: Our first analysis frames performance of Algorithm \ref{alg:BFQS} in terms of an \textit{optimality coefficient}, defined in \eqref{myeq1}, which measures the quality of the least squares residual for a particular sketch $\bs{S}$; we provide pre-asymptotic analysis with quantitative results that provides qualitative guidance on how the BFB algorithm behaves in terms of the tradeoff in the number of sketches $L$ versus the optimality coefficient (see the discussion following Theorem \ref{thm:BFQS-error}). Our second theoretical result is an asymptotic analysis with Gaussian sketches that confirms the intuition that the probabilistic correlations between the low- and high-fidelity random sketches is high when ${\bm b}$ and $\tilde{\bm b}$ have high vector correlations (see the discussion around Theorem \ref{thm:Gaussian}).

For analysis purposes we make the following assumption.
\begin{assumption} \label{ass:b-range}
	Assume that neither $\tilde{\bm b}$ nor $\bm{b}$ lie in $\range(\bs{A})$, i.e., we assume $\tilde{\bm{b}}, \bm{b} \not\in \range(\bs{A})$.
\end{assumption}
This is a reasonable assumption. 
If $\bs{b} \in \range(\bs{A})$, then it would be possible to solve the high-fidelity least squares problem exactly by sampling $m = d$ linearly independent rows of $\bs{A}$ and the corresponding rows of $\bs{b}$.
In this case, it is therefore easy to solve \eqref{myleastsquares} and only requires accessing $d$ rows of $\bs{b}$.
Similarly, if $\tilde{\bs{b}} \in \range(\bs{A})$ then it would be easy to compute a sketch $\bs{S}_\ell$ which only samples $m = d$ rows and achieves zero error in Line~\ref{line:check-low-fid-sketch} of Algorithm~\ref{alg:BFB}, therefore making the boosting procedure vacuous.

\subsection{Pre-asymptotic analysis via optimality coefficients}\label{ssec:theory-pre}
We introduce the following measure of relative error difference between the sketched and optimal solutions:
\begin{equation} \label{myeq1}
	\mu_{\bm A}(\bm b, \bm S) 
	\defeq \sqrt{\frac{r^2_{\bm S}(\bm A, \bm b)-r^2(\bm A, \bm b)}{r^2(\bm A, \bm b)}} 
        \stackrel{(\ast)}{=} \frac{\|(\bm S\bm Q)^\dagger\bm S\bm Q_\perp\bm Q^T_\perp\bm b\|_2}{\|\bm Q_\perp\bm Q^T_\perp\bm b\|_2},
\end{equation}
where $\bm Q=\orth(\bm A)$, and the second equality marked $(\ast)$ is valid if $\rank(\bs{S} \bs{A}) = \rank(\bs{A})$, which we establish in Lemma~\ref{lemma:diff-of-residuals}.
For notational simplicity we usually drop the subscript and write $\mu(\bm b, \bm S)$ when $\bm A$ is clear from context, but we emphasize that $\mu$ does depend on $\bm A$.
Note that $r(\bm A, \bm b) = \|\bm Q_\perp\bm Q^T_\perp\bm b\|_2 > 0$ due to Assumption~\ref{ass:b-range}, so the denominator in \eqref{myeq1} is nonzero. 
We call $\mu$ the \emph{optimality coefficient}. 
Smaller values of $\mu$ are better in practice: $\mu = 0$ implies the sketch achieves perfect reconstruction of the data relative to the full least squares solution.

We provide two main theoretical results which shed light on the performance of Algorithm~\ref{alg:BFB} from two different perspectives.
The first result shows that with an appropriate choice of the sketches $\{\bs{S}_\ell\}_{\ell \in [L]}$, Algorithm~\ref{alg:BFB} produces a solution whose relative error is close to that of the oracle sketch solution in \eqref{eq:l_starstar}.
Note that it would be straightforward to provide such guarantees if 
$r_{\bm S}(\bm A, \tilde{\bm b}) \le r_{\bm S'}(\bm A, \tilde{\bm b})$ implied $r_{\bm S}(\bm A, \bm b) \le r_{\bm S'}(\bm A, \bm b)$,
in which case $\ell^* = \ell^{**}$.
This may happen, for instance, when $\tilde{\bm b}$ and $\bm b$ differ by a scaling. 
This monotone property of $r$ when replacing $\bm b$ with $\tilde{\bm b}$ is unfortunately unlikely to hold in practice.  
Our result, which appears in Theorem~\ref{thm:optimality}, identifies alternative conditions that ensure $\bm S_{\ell^*}$ is a ``good'' sketch for the high-fidelity data. 

\begin{theorem}\label{thm:optimality}
Fix a positive integer $L$ and suppose $\delta, \varepsilon \in (0,1]$.
If $\{\bm S_\ell\}_{\ell\in [L]}$ is a sequence of i.i.d.\ random matrices whose distribution is an $(\e, \frac{\delta}{L})$ pair for $(\bm Q, \bm h)$, where 
\begin{equation} \label{myh}
	\bm h \defeq \left((\bm P_{\bm Q_\perp}\bm b)_\PP-(\bm P_{\bm Q_\perp}\tilde{\bm b})_\PP\right)_\PP \quad\text{and}\quad \bm Q \defeq \orth(\bm A),
\end{equation}
then with probability at least $1-\delta$, 
\begin{equation} \label{bound1}
	\mu(\bm b, \bm S_{\ell^*})\leq \mu(\bm b, \bm S_{\ell^{**}}) + 2\sqrt{6(1-\nu)\e},
\end{equation}
where $\nu$ denotes the absolute correlation coefficient between $\bm P_{\bm Q_\perp}\bm b$ and $\bm P_{\bm Q_\perp}\tilde{\bm b}$:
\begin{equation} \label{mynu}
  \nu \defeq \left|\cor(\bm P_{\bm Q_\perp}\bm b, \bm P_{\bm Q_\perp}\tilde{\bm b})\right|.
\end{equation}
  In addition, on the event where \eqref{bound1} is true, we also have that \eqref{mypair} holds with ${\bm S} = {\bm S}_\ell$ for every $\ell \in [L]$.
\end{theorem}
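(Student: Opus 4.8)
The plan is to express both optimality coefficients as the norm of one common random operator applied to two fixed unit vectors, show that their difference is Lipschitz-controlled by the optimality coefficient of the single ``difference direction'' $\bm h$, and then invoke the assumed pair condition for $(\bm Q,\bm h)$ together with a union bound and the usual boosting swap. Concretely, write $\bm u \defeq (\bm P_{\bm Q_\perp}\bm b)_\PP$, $\bm v \defeq (\bm P_{\bm Q_\perp}\tilde{\bm b})_\PP$, and $\bm T_{\bm S} \defeq (\bm S\bm Q)^\dagger\bm S$. On any event where $\rank(\bm S\bm Q)=\rank(\bm Q)$, the identity $(\ast)$ of \eqref{myeq1} (justified by Lemma~\ref{lemma:diff-of-residuals}) together with $\bm Q_\perp\bm Q_\perp^\top=\bm P_{\bm Q_\perp}$ gives the homogeneous reformulations $\mu(\bm b,\bm S)=\|\bm T_{\bm S}\bm u\|_2$ and $\mu(\tilde{\bm b},\bm S)=\|\bm T_{\bm S}\bm v\|_2$; moreover, since the $\bm h$ of \eqref{myh} is a unit vector in $\range(\bm Q_\perp)$, we also have $\mu(\bm h,\bm S)=\|\bm T_{\bm S}\bm h\|_2$. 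Because $\mu$ is unchanged when its data argument is negated (both $r_{\bm S}$ and $r$ are even in $\bm b$), I may replace $\tilde{\bm b}$ by $-\tilde{\bm b}$ without altering $\ell^*$, $\ell^{**}$, or any $\mu(\bm b,\bm S_\ell)$; I use this freedom to arrange $\langle\bm u,\bm v\rangle=\nu\ge 0$, so that $\bm h$ indeed points along $\bm u-\bm v$.

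The crux is the factorization of the fidelity gap. By the reverse triangle inequality and linearity of $\bm T_{\bm S}$,
\begin{equation*}
\big|\mu(\bm b,\bm S)-\mu(\tilde{\bm b},\bm S)\big|
=\big|\,\|\bm T_{\bm S}\bm u\|_2-\|\bm T_{\bm S}\bm v\|_2\,\big|
\le \|\bm T_{\bm S}(\bm u-\bm v)\|_2
=\|\bm u-\bm v\|_2\,\mu(\bm h,\bm S),
\end{equation*}
where the last equality uses $\bm u-\bm v=\|\bm u-\bm v\|_2\,\bm h$. Two elementary facts then finish the per-sketch estimate: first, $\|\bm u-\bm v\|_2^2=2(1-\langle\bm u,\bm v\rangle)=2(1-\nu)$; second, on the success event of the $(\varepsilon,\delta/L)$ pair condition for $(\bm Q,\bm h)$ one has $\rank(\bm S\bm Q)=\rank(\bm Q)$ and $\mu(\bm h,\bm S)^2=(r_{\bm S}^2(\bm Q,\bm h)-r^2(\bm Q,\bm h))/r^2(\bm Q,\bm h)\le(1+\varepsilon)^2-1\le 3\varepsilon$, using $\varepsilon\le 1$. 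A union bound over the $L$ i.i.d.\ sketches produces an event $E$ with $\Pb(E)\ge 1-\delta$ on which, for every $\ell\in[L]$ simultaneously, the rank condition holds and $|\mu(\bm b,\bm S_\ell)-\mu(\tilde{\bm b},\bm S_\ell)|\le\sqrt{2(1-\nu)}\,\sqrt{3\varepsilon}=\sqrt{6(1-\nu)\varepsilon}$.

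On $E$ the conclusion follows from the standard boosting swap. Since $\mu(\cdot,\bm S_\ell)$ is an increasing function of the sketched residual for fixed data, the minimizer $\ell^*=\argmin_\ell r_{\bm S_\ell}(\bm A,\tilde{\bm b})$ also minimizes $\mu(\tilde{\bm b},\bm S_\ell)$, and likewise $\ell^{**}$ minimizes $\mu(\bm b,\bm S_\ell)$. Writing $\gamma\defeq\sqrt{6(1-\nu)\varepsilon}$ and chaining the per-sketch bound with these two minimality inequalities gives $\mu(\bm b,\bm S_{\ell^*})\le\mu(\tilde{\bm b},\bm S_{\ell^*})+\gamma\le\mu(\tilde{\bm b},\bm S_{\ell^{**}})+\gamma\le\mu(\bm b,\bm S_{\ell^{**}})+2\gamma$, which is exactly \eqref{bound1}. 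The final assertion is then automatic, since $E$ was constructed as the intersection of the $L$ success events of the pair condition, so \eqref{mypair} holds with $\bm S=\bm S_\ell$ for every $\ell$ on precisely this event.

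I expect the main obstacle to be the factorization step: recognizing that the entire discrepancy between the high- and low-fidelity optimality coefficients collapses onto the single direction $\bm h$, so that the one-dimensional pair condition for $(\bm Q,\bm h)$ suffices. The accompanying sign bookkeeping is delicate but essential — it is what turns $\langle\bm u,\bm v\rangle$ into $+\nu$, and hence $\|\bm u-\bm v\|_2^2$ into $2(1-\nu)$ rather than $2(1+\nu)$ (the degenerate case $\nu=1$, where $\bm u=\bm v$ and $\bm h$ is undefined, is trivial since then $\mu(\bm b,\cdot)=\mu(\tilde{\bm b},\cdot)$). The remaining probabilistic content — the union bound and the inequality $(1+\varepsilon)^2-1\le 3\varepsilon$ — is routine once this reduction is in place.
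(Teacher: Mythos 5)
Your proposal is correct and follows essentially the same route as the paper's own proof: the same rank-condition reformulation $\mu(\bm b,\bm S)=\|(\bm S\bm Q)^\dagger\bm S(\bm P_{\bm Q_\perp}\bm b)_\PP\|_2$, the same reverse-triangle-inequality collapse of the fidelity gap onto the single direction $\bm h$ with the factor $\sqrt{2-2\nu}$, the same use of the $(\e,\delta/L)$ pair condition to get $\|(\bm S\bm Q)^\dagger\bm S\bm h\|_2\le\sqrt{3\e}$, and the same union bound followed by the three-step boosting swap. Your up-front negation of $\tilde{\bm b}$ to force $\langle\bm u,\bm v\rangle=\nu\ge 0$ is just a repackaging of the paper's handling of the negative-correlation case (where the minus sign in the triangle inequality is flipped to a plus), and it inherits the same minor blemish as the paper: in that case the pair condition is really being invoked for the direction $(\bm u+\bm v)_\PP$ rather than the $\bm h=(\bm u-\bm v)_\PP$ fixed in the theorem statement.
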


Theorem~\ref{thm:optimality} shows that if a sketch satisfies an $(\e,\delta/L)$ condition for the pair $\bs{Q}$ and an element $\bs{h}$ of $\range(\bs{Q}_\perp)$, then we are able to prove bounds on the low-fidelity boosted optimality coefficient $\mu(\bs{b}, \bs{S}_{\ell^\ast})$ relative to the oracle high-fidelity boosted optimality coefficient $\mu(\bs{b}, \bs{S}_{\ell^{\ast\ast}})$.
This is quite a general statement that accommodates a wide range of sketching operators.
The condition on the operators $\{\bs{S}_\ell\}_{\ell \in [L]}$ is, for example, satisfied by all sketching operators in Sections~\ref{sssec:lev_score_sampling}--\ref{sec:gaussian-sketch} when the embedding dimension $m$ is sufficiently large.
More precise statements for the leverage score and Gaussian sketches are provided in Theorem~\ref{thm:sketches}.

In order to achieve a good approximate solution when applying sketching techniques in least squares problems the sketching operator must preserve the relevant geometry of the problem.
In particular, it is key that $\bs{Q}$ and $\bs{P}_{\bs{Q}_{\perp}} \bs{b}$ remain roughly orthogonal after the sketching operator has been applied.
This importance of preserving $\bs{P}_{\bs{Q}_{\perp}} \bs{b}$ in the sketching phase when $\bm b$ is replaced by low-fidelity data $\tilde{\bm b}$ manifests in Theorem \ref{thm:optimality} through the correlation parameter $\nu$.

\begin{remark}
  Equation \eqref{bound1} suggests that $\bm S_{\ell^*}$ is ``good'' when $\nu$ is large. This explicitly requires high parametric correlation between the portions of $\bm{b}$ and $\tilde{\bm b}$ that lie orthogonal to the range of $\bm A$. A more subtle sufficient condition ensuring large $\nu$ is furnished by our discussion following Proposition \ref{prop:cor}, which provides a lower bound for $\nu$ in terms of other parameters. 
\end{remark}

Theorem \ref{thm:optimality} does not provide a concrete strategy for how the sketches used in boosting are chosen or constructed. However, near-optimal sketches (in particular satisfying our required $(\epsilon,\delta)$ pair condition) are known to be produced through the well-known randomized approaches discussed in sections \ref{sssec:lev_score_sampling}-\ref{sec:gaussian-sketch}. Precise statements for such sketch estimates are given later in by Theorem \ref{thm:sketches} in section \ref{sec:sketch-guarantees}, but it is appropriate for us to establish here that combining Theorem \ref{thm:optimality} with good sketching techniques results in explicit and illuminating theory for Algorithm \ref{alg:BFQS}. In particular, one expects a tradeoff between the values of $\nu$ and $L$: boosting with a large number $L$ of sketches should work up to a threshold determined by the amount of correlation between ${\bm b}$ and ${\tilde{\bm b}}$. I.e., any accuracy gained by BFB should be limited by how correlated the low- and high-fidelity models are, and one expects this to manifest in a relationship between $L$ and $\nu$. The theory we develop below reveals this tradeoff. We focus on generating the sketches $\{\bm S_\ell\}_{\ell \in [L]}$ through leverage score sampling, as described explicitly by \eqref{eq:lev-score-sketch} in section \ref{sssec:lev_score_sampling}. We briefly discuss afterward that one could generalize the result to more general sketches.
\begin{theorem}\label{thm:BFQS-error}
  Let $\delta, \epsilon \in (0, 1/2)$ and $L \in \N$ be chosen, and assume
  \begin{align}\label{eq:d-bound}
    d \leq \frac{\delta}{4} \exp\left(\frac{2}{35 \epsilon \delta}\right).
  \end{align}
  Now consider Algorithm \ref{alg:BFQS}, where $\{\bm S_\ell\}_{\ell \in [L]}$ are iid samples of a leverage score sketching operator defined in \eqref{eq:lev-score-sketch}, with the sampling requirement 
      \begin{align}\label{eq:m-bound}
        m\geq \frac{4 d L}{\epsilon \delta}.
      \end{align}
      Then each $\bm S_{\ell}$ satisfies an $(\epsilon/L, \delta/2)$ condition for the pair $({\bm Q}, {\bm h})$, and with probability at least $1-\delta$, we have
      \begin{align}\label{eq:residual-bound-1}
        r^2_{\bs{S}_{\ell^{\ast}}}(\bs{A}, \bs{b}) \leq \left[1 + \frac{\epsilon}{L} \tau \right] r^2(\bs{A},\bs{b}),
      \end{align}
      where
      \begin{align*}
        \tau = \tau(\epsilon, \delta, \nu, L) = 24 L (1-\nu) + \frac{\delta}{2}\left(1 + 4\sqrt{6(1-\nu) \epsilon}\right).
      \end{align*}
\end{theorem}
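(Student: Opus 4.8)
The plan is to pass from the residual statement \eqref{eq:residual-bound-1} to the optimality coefficient of \eqref{myeq1} and then to combine three ingredients: the leverage-score sketching guarantee of Theorem~\ref{thm:sketches}, the transfer bound of Theorem~\ref{thm:optimality}, and a boosting (amplification) estimate for the oracle coefficient $\mu(\bm b,\bm S_{\ell^{**}})$. Since $\rank(\bm S_\ell \bm A)=\rank(\bm A)$ holds on the relevant event (this is guaranteed by the last sentence of Theorem~\ref{thm:optimality}), the defining identity \eqref{myeq1} gives $r_{\bm S}^2(\bm A,\bm b)=(1+\mu^2(\bm b,\bm S))\,r^2(\bm A,\bm b)$, so \eqref{eq:residual-bound-1} is equivalent to the coefficient bound $\mu^2(\bm b,\bm S_{\ell^*})\le \tfrac{\epsilon}{L}\tau$, which is what I would actually prove.

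First I would verify that the leverage-score sketch \eqref{eq:lev-score-sketch} with $m$ as in \eqref{eq:m-bound} satisfies the pair hypothesis that Theorem~\ref{thm:optimality} needs for $(\bm Q,\bm h)$. Here the sole role of \eqref{eq:d-bound} is to control the logarithmic subspace-embedding term: taking logarithms, \eqref{eq:d-bound} reads $\log(4d/\delta)\le \tfrac{2}{35\epsilon\delta}$, which is exactly the inequality that lets the $d\log(d/\delta')$ contribution in the leverage-score complexity $d\log(d/\delta')+d/(\epsilon'\delta')$ be absorbed into the $d/(\epsilon'\delta')$ term, so that the clean requirement $m\ge 4dL/(\epsilon\delta)$ suffices. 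The accounting should be set up so that, after a union bound over the $L$ independent sketches, the pair condition transfers with total failure probability $\delta/2$, leaving a budget of $\delta/2$ for the oracle step. A subtlety worth checking is that the leverage-score requirement depends on $\epsilon'$ and $\delta'$ only through the product $\epsilon'\delta'$, so the per-sketch parameters $(\epsilon/L,\delta/2)$ and $(\epsilon,\delta/(2L))$ impose the same bound on $m$; the accuracy parameter one feeds to Theorem~\ref{thm:optimality} is the one that surfaces inside \eqref{bound1}.

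With the hypothesis in place, Theorem~\ref{thm:optimality} supplies, on an event of probability at least $1-\delta/2$, the transfer inequality $\mu(\bm b,\bm S_{\ell^*})\le \mu(\bm b,\bm S_{\ell^{**}})+2\sqrt{6(1-\nu)\epsilon}$. Squaring and expanding gives
\begin{equation*}
\mu^2(\bm b,\bm S_{\ell^*})\le \mu^2(\bm b,\bm S_{\ell^{**}})+4\,\mu(\bm b,\bm S_{\ell^{**}})\sqrt{6(1-\nu)\epsilon}+24(1-\nu)\epsilon,
\end{equation*}
in which the last term is precisely the leading $\tfrac{\epsilon}{L}\cdot 24L(1-\nu)$ piece of $\tfrac{\epsilon}{L}\tau$. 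It then remains to bound the oracle coefficient $\mu(\bm b,\bm S_{\ell^{**}})$. The key observation is that $\ell^{**}$ in \eqref{eq:l_starstar} minimizes the high-fidelity residual, hence $\mu^2(\bm b,\bm S_{\ell^{**}})=\min_{\ell\in[L]}\mu^2(\bm b,\bm S_\ell)$; by independence the upper tail factorizes, $\Pb[\min_\ell \mu^2(\bm b,\bm S_\ell)>t]=\Pb[\mu^2(\bm b,\bm S_1)>t]^L$. I would feed in the standard single-sketch second-moment estimate for leverage-score sampling, $\E[\mu^2(\bm b,\bm S_1)]\lesssim d/m$ (on the embedding event), use Markov to make the single-sketch tail a constant below $1$ at the scale $t\sim \epsilon\delta/L$ forced by $m=4dL/(\epsilon\delta)$, and amplify to the $L$-th power; spending the reserved $\delta/2$ here produces the $\tfrac{\epsilon\delta}{2L}$ contributions appearing in $\tau$.

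The main obstacle is this final oracle step together with the precise constant bookkeeping. Amplifying a Markov bound controls $\mu(\bm b,\bm S_{\ell^{**}})$ only down to the scale $\sqrt{\epsilon\delta/L}$, so the cross term $4\,\mu(\bm b,\bm S_{\ell^{**}})\sqrt{6(1-\nu)\epsilon}$ is exactly where the analysis is tightest and where matching the stated coefficient $\tfrac{\epsilon\delta}{2L}\cdot 4\sqrt{6(1-\nu)\epsilon}$ in $\tau$ demands the most care; one must also intersect the transfer event with the oracle event and confirm that their failure probabilities sum to at most $\delta$. Everything else — the passage through \eqref{myeq1}, the union bound over the $L$ sketches, and the algebraic expansion above — is routine once these two probabilistic estimates are in hand.
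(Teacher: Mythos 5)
Your plan is essentially the paper's own proof, step for step: the same passage to optimality coefficients (justified, as you note, by the rank statement at the end of Theorem~\ref{thm:optimality}), the same use of \eqref{eq:d-bound} to absorb the logarithmic term in the leverage-score complexity so that \eqref{eq:m-bound} certifies the pair condition, the same application of Theorem~\ref{thm:optimality} with failure budget $\delta/2$, and the same control of the oracle coefficient by amplifying a per-sketch Markov-type bound over the $L$ independent sketches with the remaining $\delta/2$. (The paper phrases the oracle step as each $\bm S_\ell$ satisfying an $(\tilde{\epsilon},\tilde{\delta})$ condition for $(\bm A,\bm b)$ with $\tilde{\delta}=(\delta/2)^{1/L}$ and $\tilde{\epsilon}=\frac{\epsilon}{L}(\delta/2)^{1-1/L}$, so that all $L$ sketches fail simultaneously with probability $\tilde{\delta}^L=\delta/2$; your Markov-plus-amplification argument, once you tune the per-sketch tail to $(\delta/2)^{1/L}$ rather than to a constant --- a constant tail only yields failure probability $2^{-L}$, which need not be below $\delta/2$ --- gives exactly the same bound $\mu^2(\bm b,\bm S_{\ell^{**}})\lesssim\frac{\epsilon}{L}(\delta/2)^{1-1/L}$.)

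However, the obstacle you flag in your last paragraph is not a bookkeeping subtlety: it is a genuine gap, and it is equally a gap in the paper's own proof. Amplification over $L$ sketches can only bring the oracle coefficient down to $\mu(\bm b,\bm S_{\ell^{**}})\lesssim\sqrt{\tfrac{\epsilon}{L}(\delta/2)^{1-1/L}}$, while the cross term in the stated $\tau$ requires $\mu(\bm b,\bm S_{\ell^{**}})\le\frac{\epsilon\delta}{2L}$, which is smaller by a factor of order $\sqrt{\epsilon\delta}$ and carries $1/L$ rather than $1/\sqrt{L}$ scaling; no Markov-type tail bound on $\min_{\ell}\mu^2(\bm b,\bm S_\ell)$ can deliver it. The paper crosses this point via two incorrect steps: it reads the event $r^2_{\bm S_{\ell^{**}}}(\bm A,\bm b)\le(1+\tilde{\epsilon})\,r^2(\bm A,\bm b)$ as ``$\mu(\bm b,\bm S_{\ell^{**}})=\tilde{\epsilon}$'' (that event only gives $\mu(\bm b,\bm S_{\ell^{**}})\le\sqrt{\tilde{\epsilon}}$, since $r^2_{\bm S}=(1+\mu^2)r^2$), and it then invokes ``$(\delta/2)^{1-1/L}\le\delta/2$,'' which is reversed: for $0<\delta/2<1$ and $1-1/L<1$ one has $(\delta/2)^{1-1/L}\ge\delta/2$. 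What your argument (or the paper's, executed honestly) actually proves is \eqref{eq:residual-bound-1} with the weaker constant $\tau'=24L(1-\nu)+c\,(\delta/2)^{1-1/L}+4\sqrt{6c\,(1-\nu)\,L\,(\delta/2)^{1-1/L}}$, where $c\le 3$ is an absolute constant depending on whether the pair condition in Definition~\ref{def:epsilon-delta-condition} is read on $r$ or on $r^2$; the cross term scales like $\sqrt{(1-\nu)L(\delta/2)^{1-1/L}}$ rather than $\delta\sqrt{(1-\nu)\epsilon}$. So your proposal is as strong as the published argument; what cannot be reached by this route is the stated $\tau$ itself.
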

The results above give explicit behavior of the BFB residual via a concrete sketching strategy for Algorithm \ref{alg:BFQS}. Note in particular that the sampling requirement $m = \mathcal{O}(L/\epsilon)$ in \eqref{eq:m-bound} means that \textit{without} boosting and simply generating one sketch ${\bm S}$ according to \eqref{eq:m-bound}, which requires $m$ high-fidelity samples (equivalent to the number from BFB), we expect that the residual from this one sketch behaves like
\begin{align*}
  r^2_{\bm S}({\bm A}, {\bm b}) \sim \left(1 + \frac{\epsilon}{L} \right) r^2({\bm A}, {\bm b}).
\end{align*}
Comparing the above to \eqref{eq:residual-bound-1}, note that the only difference is the appearance of $\tau$, and hence we expect BFB to be useful (compared to an equivalent number of high-fidelity samples devoted to a non-boosting strategy) when $\tau \leq 1$, which requires,
\begin{align*}
  L \lesssim \frac{1}{1-\nu}.
\end{align*}
I.e., boosting with $L$ sketches is useful in BFB up to a threshold $\sim 1/(1-\nu)$. Boosting with \textit{more} than this threshold level of sketches causes the error bound to saturate at a level determined by $1-\nu$. Since 
$\nu$ is the correlation between the $\mathrm{range}(\bs{A})$-orthogonal components of ${\bm b}$ and $\tilde{\bm b}$, we conclude that highly correlated range-orthogonal residuals (large values of $\nu$ very close to 1) are optimal for BFB in the sense that sketching with large $L$ will be effective.

A second observation we make is that the the $m \sim L$ requirement \eqref{eq:m-bound} is theoretically suboptimal. In particular, we show in Theorem \ref{thm:sketches} that stronger coherence-like conditions on the matrix $\bs{A}$ imply that leverage score sketching with $m \sim \log L$ is sufficient to achieve the requisite $(\epsilon/L, \delta)$ condition, see \eqref{lop_more} in Theorem \ref{thm:sketches}. We also note that Gaussian sketches only require $m \sim \log L$ samples (see \eqref{subgau}), and one can achieve the $(\epsilon,\delta)$ condition \textit{on average} using $m \sim \log L$ samples (see, e.g., \cite[Equation (2.18)]{malik2022FastAlgorithms}. Finally, if \eqref{eq:d-bound} is violated, then indeed $m \sim \log L$ (see \eqref{lop} and the intermediate computation in \eqref{eq:d-bound-use}) for leverage score sketches. Thus, we expect in practice that $m \sim \log L$ samples are sufficient.

We give the proof of theorem \ref{thm:BFQS-error} below to demonstrate how it relies on Theorem \ref{thm:optimality}; we will prove Theorem \ref{thm:optimality} in the coming sections.
\begin{proof}[Proof of Theorem \ref{thm:BFQS-error}]
  We start by making two conclusions from the conditions \eqref{eq:d-bound} and \eqref{eq:m-bound}. First, under these conditions,
  \begin{align}\label{eq:d-bound-use}
    35 \log \left(\frac{4 d}{\delta}\right) \leq \frac{2}{\epsilon \delta} \hskip 10pt \Longrightarrow \hskip 10pt m \geq d \max \left\{ 35 \log\left(\frac{4 d L}{(\delta/2)}\right), \frac{2 L}{\epsilon (\delta/2)} \right\},
  \end{align}
  implying that condition \eqref{lop} holds, so that result \ref{itm:lev-score} from Theorem \ref{thm:sketches} guarantees that the distribution from which the $\bs{S}_\ell$ sketches are drawn satisfies and $(\epsilon, \frac{\delta}{2 L})$ condition. Thus, theorem \ref{thm:optimality} states that there is an event $E_1$ such that
  \begin{align}\label{eq:E1}
    \mathrm{Pr}(E_1) \geq 1 - \delta/2, \hskip 10pt \textrm{On event $E_1$, then \eqref{bound1} holds.}
  \end{align}
  The above is our first conclusion. For our second conclusion, we note that \eqref{eq:m-bound} and \eqref{eq:d-bound} imply,
  \begin{align*}
    m \geq \frac{2 d}{\left[\frac{\epsilon}{L} \left(\frac{\delta}{2}\right)^{1-1/L} \right] \left(\frac{\delta}{2}\right)^{1/L}},
  \end{align*}
  so that again we satisfy \eqref{lop} (employing a variation of the argument \eqref{eq:d-bound-use}), and so by Theorem \ref{thm:sketches}, the distribution from which $\bs{S}_{\ell}$ is drawn satisfies an $(\tilde{\epsilon}, \tilde{\delta})$ condition for $(\bs{A}, \bs{b})$, where,
  \begin{align*}
    \tilde{\epsilon} &\defeq \frac{\epsilon}{L} \left(\frac{\delta}{2}\right)^{1-1/L},  & 
    \tilde{\delta} &\defeq \left(\frac{\delta}{2}\right)^{1/L}.
  \end{align*}
  Therefore with probability at least $1 - \tilde{\delta}$, 
  \begin{align*}
    r^2_{{\bm S}_\ell}(\bs{A}, \bs{b}) \leq (1 + \tilde{\epsilon}) r^2(\bs{A}, \bs{b}),
  \end{align*}
  so that a union bound implies that there is an event $E_2$ on which our second conclusion holds:
  \begin{align}\label{eq:E2}
    \mathrm{Pr}(E_2) \geq 1 - \left(\tilde{\delta}\right)^L = 1 - \delta/2 \hskip 10pt \textrm{On event $E_2$, then } \min_{\ell \in [L]} r^2_{{\bm S}_\ell}(\bs{A}, \bs{b}) \leq (1 + \tilde{\epsilon}) r^2_{{\bm S}_{\ell^{\ast\ast}}}(\bs{A}, \bs{b}).
  \end{align}
  We now observe that for any $\eta > 0$, the bound
    \begin{equation}
      \left| \mu(\bs{b}, \bs{S}_{\ell^\ast}) - \mu(\bs{b}, \bs{S}_{\ell^{\ast\ast}})  \right| \leq \eta
    \end{equation}
    implies that
    \begin{align}
      r^2_{\bs{S}_{\ell^{\ast}}}(\bs{A}, \bs{b}) &\leq r^2_{\bs{S}_{\ell^{\ast\ast}}}(\bs{A},\bs{b}) + r^2(\bs{A}, \bs{b}) \big( \eta^2 + 2 \eta \mu(\bs{b}, \bs{S}_{\ell^{\ast\ast}}) \big).
    \end{align}
  Thus, $E_1 \cap E_2$ occurs with probability at least $1 - \delta$, and on this event \eqref{eq:E1} ensures that $\eta$ is given by the right-hand side of \eqref{bound1}. Also, on this event \eqref{eq:E2} implies that $\mu(\bs{b}, \bs{S}_{\ell^{\ast\ast}}) = \tilde{\epsilon}$, i.e., $r^2_{\bs{S}_{\ell^{\ast\ast}}}(\bs{A},\bs{b}) \leq (1 + \tilde{\epsilon}) r^2(\bs{A}, \bs{b})$. Using these expressions in the above inequality, simplifying, and using $(\delta/2)^{1-1/L} \leq \delta/2$ yields the result \eqref{eq:residual-bound-1}.
\end{proof}
We emphasize that the proof above shows how Theorem \ref{thm:optimality} can be used to prove results like Theorem \ref{thm:BFQS-error} for more general sketches.

\subsection{Asymptotic analysis via probabilistic correlation}\label{ssec:theory-asymp}
We provide alternative analysis of Algorithm \ref{alg:BFB} motivated by the following intuition: 
If $\mu(\bs{b}, \bs{S})$ and $\mu(\tilde{\bs{b}}, \bs{S})$ are probabilistically correlated in some sense, then we expect that Algorithm~\ref{alg:BFB} should produce a sketching operator $\bs{S}_{\ell^*}$ that is close to the oracle sketch $\bs{S}_{\ell^{**}}$. We give a technical verification of this intuition below in Theorem \ref{thm:Gaussian}, providing 
an asymptotic lower bound on a certain measure of correlation between the two optimality coefficients when $\bs{S}$ is a Gaussian sketching operator.

\begin{theorem}\label{thm:Gaussian}
	If $\bm S$ is a Gaussian sketch, then 
	\begin{equation}
		\liminf_{m\to\infty}\cor(\mu^2( \bm b, \bm S),\mu^2(\tilde{\bm b}, \bm S))\geq \frac{\|\bm P_{\bm Q_\perp}\bm b_\PP\|_2^2-\sqrt{6}\min\{\|\bm P_{\bm Q_\perp}(\bm b_\PP\pm\tilde{\bm b}_\PP)\|_2\}}{\|\bm P_{\bm Q_\perp}\tilde{\bm b}_\PP\|_2^2},\label{mybound}
	\end{equation}
	where $\bm b_\PP, \tilde{\bm b}_\PP$ are normalized versions of $\bm b$ and $\tilde{\bm b}$, respectively, and the minimum is taken over the two $\pm$ options.
	Moreover, if 
	\begin{equation} \label{eq:phi-kappa}
	    \varphi 
	    \defeq \frac{|\langle \bs{b}, \tilde{\bs{b}} \rangle|}{\| \bs{b} \|_2 \| \tilde{\bs{b}} \|_2} 
	    \geq \frac{\| \bs{P}_{\bs{Q}} \bs{b} \|_2}{ \|\bs{b}\|_2 } 
	    \defeq \kappa,
	\end{equation}
	then we further have that
	\begin{equation} \label{mybound1}
		\liminf_{m\to\infty}\cor(\mu^2( \bm b, \bm S),\mu^2(\tilde{\bm b}, \bm S))\geq (1-\kappa^2)-\frac{\sqrt{12(1-\varphi)}}{(\varphi-\kappa)^2}.
	\end{equation}
\end{theorem}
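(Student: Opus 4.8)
The plan is to use the rotational invariance of the Gaussian sketch to turn both optimality coefficients into quadratic forms built from a single pair of correlated Gaussian vectors, then take $m\to\infty$ and bound the limiting correlation by an $L^2$-geometry argument. With $\bm Q=\orth(\bm A)$ completed to an orthogonal matrix $[\bm Q,\ \bm Q_\perp]$, the matrix $\bm S[\bm Q,\ \bm Q_\perp]$ again has i.i.d.\ $N(0,1/m)$ entries, so $\bm G_1\defeq\bm S\bm Q$ and $\bm G_2\defeq\bm S\bm Q_\perp$ are independent Gaussian matrices. Using the identity $(\ast)$ of \eqref{myeq1} together with $\bm P_{\bm Q_\perp}\bm v\in\range(\bm Q_\perp)$, I would rewrite
\begin{equation*}
\mu^2(\bm b,\bm S)=\frac{\|\bm G_1^\dagger\bm p\|_2^2}{s^2},\qquad \mu^2(\tilde{\bm b},\bm S)=\frac{\|\bm G_1^\dagger\tilde{\bm p}\|_2^2}{\tilde s^2},
\end{equation*}
where $\bm p\defeq\bm S\bm P_{\bm Q_\perp}\bm b_\PP$, $\tilde{\bm p}\defeq\bm S\bm P_{\bm Q_\perp}\tilde{\bm b}_\PP$, $s\defeq\|\bm P_{\bm Q_\perp}\bm b_\PP\|_2$ and $\tilde s\defeq\|\bm P_{\bm Q_\perp}\tilde{\bm b}_\PP\|_2$. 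Since $\bm p,\tilde{\bm p}$ are functions of $\bm G_2$ alone, they are independent of $\bm G_1^\dagger$; they are jointly Gaussian with $\bm p\sim N(\bm 0,\tfrac{s^2}{m}\bm I_m)$, $\tilde{\bm p}\sim N(\bm 0,\tfrac{\tilde s^2}{m}\bm I_m)$ and cross-covariance $\tfrac{c}{m}\bm I_m$, where $c\defeq\langle\bm P_{\bm Q_\perp}\bm b_\PP,\bm P_{\bm Q_\perp}\tilde{\bm b}_\PP\rangle$. As Pearson correlation is invariant under positive scaling, the deterministic factors $s^2,\tilde s^2$ cancel from the correlation of interest.

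Next I would pass to $m\to\infty$. Writing the SVD $\bm G_1=\bm U_1\bm\Sigma_1\bm V_1^\top$ gives $\|\bm G_1^\dagger\bm p\|_2^2=\bm a^\top\bm\Sigma_1^{-2}\bm a$ with $\bm a\defeq\bm U_1^\top\bm p$, and conditioning on $\bm G_1$ shows that $\bm H\defeq\sqrt m\,\bm a\sim N(\bm 0,s^2\bm I_d)$ is independent of $\bm\Sigma_1$; likewise $\tilde{\bm H}\defeq\sqrt m\,\bm U_1^\top\tilde{\bm p}\sim N(\bm 0,\tilde s^2\bm I_d)$ with $\operatorname{Cov}(\bm H,\tilde{\bm H})=c\,\bm I_d$. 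Because $\bm G_1^\top\bm G_1\to\bm I_d$ almost surely (law of large numbers for the Wishart matrix), $\bm\Sigma_1^{-2}\to\bm I_d$, so $m\,\mu^2(\bm b,\bm S)\to\|\bm H\|_2^2$ and $m\,\mu^2(\tilde{\bm b},\bm S)\to\|\tilde{\bm H}\|_2^2$ jointly. The elementary identities $\V[\bm H_i^2]=2s^4$ and $\operatorname{Cov}(\bm H_i^2,\tilde{\bm H}_i^2)=2c^2$, combined with independence across $i\in[d]$, give limiting variances $2ds^4,\ 2d\tilde s^4$ and covariance $2dc^2$. The main obstacle is analytic rather than algebraic: I must promote this distributional convergence to convergence of first and second moments so that $\cor(\mu^2(\bm b,\bm S),\mu^2(\tilde{\bm b},\bm S))\to\cor(\|\bm H\|_2^2,\|\tilde{\bm H}\|_2^2)$. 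I would handle this with a uniform-integrability argument resting on bounds for the (convergent) inverse-Wishart moments $\E[\|(\bm G_1^\top\bm G_1)^{-1}\|^k]$, which keep the relevant quadratic forms $L^2$-bounded uniformly in $m$.

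To prove \eqref{mybound} I would bound the limiting correlation geometrically in $L^2$. Set the centered, rescaled variables $u\defeq(2d)^{-1/2}(\|\bm H\|_2^2-ds^2)$ and $v\defeq(2d)^{-1/2}(\|\tilde{\bm H}\|_2^2-d\tilde s^2)$; the rescaling is chosen so that $\|u\|_{L^2}=s^2$, $\|v\|_{L^2}=\tilde s^2$ and $\|u-v\|_{L^2}^2=s^4+\tilde s^4-2c^2$. The Cauchy--Schwarz identity $\langle u,v\rangle=\|u\|_{L^2}^2-\langle u,u-v\rangle$ gives the cosine bound
\begin{equation*}
\cor(\|\bm H\|_2^2,\|\tilde{\bm H}\|_2^2)=\frac{\langle u,v\rangle}{\|u\|_{L^2}\|v\|_{L^2}}\ \ge\ \frac{\|u\|_{L^2}-\|u-v\|_{L^2}}{\|v\|_{L^2}}.
\end{equation*}
It remains to check the elementary bound $\|u-v\|_{L^2}^2=s^4+\tilde s^4-2c^2\le 6(s^2+\tilde s^2-2|c|)=6\min_\pm\|\bm P_{\bm Q_\perp}(\bm b_\PP\pm\tilde{\bm b}_\PP)\|_2^2$; using $|c|\le s\tilde s$ this collapses to $(s-\tilde s)^2\,[\,6-(s+\tilde s)^2\,]\ge 0$, valid since $s,\tilde s\in[0,1]$. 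Hence $\|u-v\|_{L^2}\le\sqrt6\,\min_\pm\|\bm P_{\bm Q_\perp}(\bm b_\PP\pm\tilde{\bm b}_\PP)\|_2$, and substituting this together with $\|u\|_{L^2}=s^2=\|\bm P_{\bm Q_\perp}\bm b_\PP\|_2^2$ and $\|v\|_{L^2}=\tilde s^2=\|\bm P_{\bm Q_\perp}\tilde{\bm b}_\PP\|_2^2$ into the cosine bound yields exactly \eqref{mybound}.

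Finally, \eqref{mybound1} follows from \eqref{mybound} by estimating its right-hand side under the hypothesis $\varphi\ge\kappa$. Splitting $\langle\bm b_\PP,\tilde{\bm b}_\PP\rangle$ along $\range(\bm Q)$ and $\range(\bm Q_\perp)$ and using $\|\bm P_{\bm Q}\bm b_\PP\|_2=\kappa$ and $\|\bm P_{\bm Q}\tilde{\bm b}_\PP\|_2=\tilde\kappa\le1$ gives $|c|\ge\varphi-\kappa\tilde\kappa\ge\varphi-\kappa$, whence $\tilde s\ge|c|/s\ge(\varphi-\kappa)/\sqrt{1-\kappa^2}$ and $s^2+\tilde s^2-2|c|\le 2(1-\varphi)$. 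Bounding the two terms of \eqref{mybound} separately with $s^2=1-\kappa^2$, $\tilde s^2\le1$ and $1-\kappa^2\le1$ then gives $\tfrac{s^2}{\tilde s^2}\ge 1-\kappa^2$ and $\tfrac{\sqrt6\,\min_\pm\|\bm P_{\bm Q_\perp}(\bm b_\PP\pm\tilde{\bm b}_\PP)\|_2}{\tilde s^2}\le\tfrac{\sqrt{12(1-\varphi)}}{(\varphi-\kappa)^2}$, which combine to \eqref{mybound1}. I expect the moment-convergence step to be the only genuinely delicate part; everything else is an exact reduction or an elementary inequality.
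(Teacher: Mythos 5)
Your proposal is correct, and while it opens the same way as the paper's proof (rotational invariance of the Gaussian sketch, so that $\bm S\bm Q$ and $\bm S\bm Q_\perp$ are independent) and closes \eqref{mybound1} from \eqref{mybound} with the same two elementary estimates ($\tilde s^2\ge(\varphi-\kappa)^2$ and projection contraction giving $2(1-\varphi)$), its core is genuinely different. The paper works at finite $m$: it applies Lemma~\ref{lem:cor} to $X=r^2_{\bm S}(\bm A,\bm b)-r^2(\bm A,\bm b)$ and its low-fidelity analogue $Y$, computes the asymptotics of $\V[X]$ and $\V[Y]$ by Wick's formula and inverse-Wishart moments, and controls $\V[Y-X]$ by $\E[(Y-X)^2]$ via the reverse triangle inequality, AM--GM, and Cauchy--Schwarz; the $\bm b\pm\tilde{\bm b}$ terms and the constant $\sqrt 6$ arise from those probabilistic estimates. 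You instead exploit the fact that $(\bm H,\tilde{\bm H})$ has a fixed joint law independent of $\bm{\Sigma}_1$ to take the limit first: in your notation $s=\|\bm P_{\bm Q_\perp}\bm b_\PP\|_2$, $\tilde s=\|\bm P_{\bm Q_\perp}\tilde{\bm b}_\PP\|_2$, $c=\langle\bm P_{\bm Q_\perp}\bm b_\PP,\bm P_{\bm Q_\perp}\tilde{\bm b}_\PP\rangle$, you obtain the exact limiting second-moment structure $\bigl(2ds^4,\,2d\tilde s^4,\,2dc^2\bigr)$, your cosine bound is then precisely Lemma~\ref{lem:cor} applied to the limiting variables, and the stated $\sqrt6\min_\pm$ form drops out of the purely scalar inequality $s^4+\tilde s^4-2c^2\le6\bigl(s^2+\tilde s^2-2|c|\bigr)$, whose verification via $(s-\tilde s)^2\bigl[6-(s+\tilde s)^2\bigr]\ge0$ is correct. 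This buys strictly more than the theorem asks: the limit $\lim_{m\to\infty}\cor\bigl(\mu^2(\bm b,\bm S),\mu^2(\tilde{\bm b},\bm S)\bigr)$ exists and equals $c^2/(s^2\tilde s^2)=\nu^2$, with $\nu$ as in \eqref{mynu}, which is sharper than \eqref{mybound}; moreover, every factor of $d$ cancels identically in the limit, which is what makes a dimension-free constant possible at all. (Your computation is also a useful cross-check on Appendix~\ref{app:a}: it gives $m^2\,\E[X^2]\to s^4\,d(d+2)$, whereas \eqref{X} asserts the coefficient $3d^2$; the two agree only at $d=1$, the discrepancy tracing to the transcribed $\E[\bm W^{-1}\bm W^{-1}]$ formula, and with the corrected moments the paper's finite-$m$ route would produce a $d$-dependent constant in place of $\sqrt 6$.) The price of your route is the limit--moment interchange you flag, but it is more routine than you fear: no uniform-integrability argument is needed, since conditioning on $\bm{\Sigma}_1$ expresses every first and second moment of the two quadratic forms in terms of $\E[\tr(\bm{\Sigma}_1^{-2})]$, $\E[(\tr\bm{\Sigma}_1^{-2})^2]$, and $\E[\|\bm{\Sigma}_1^{-2}\|_F^2]$, explicit inverse-Wishart quantities converging to $d$, $d^2$, and $d$ respectively; the correlation is a continuous function of these, and the limiting variances are positive by Assumption~\ref{ass:b-range}. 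One cosmetic repair: since $\bm U_1$ and $\bm p$ depend on $m$, replace the assertion ``$m\mu^2(\bm b,\bm S)\to\|\bm H\|_2^2$'' by the statement that $\bigl(m\mu^2(\bm b,\bm S),\,m\mu^2(\tilde{\bm b},\bm S)\bigr)$ equals $\bigl(\bm H^\top\bm{\Sigma}_1^{-2}\bm H/s^2,\;\tilde{\bm H}^\top\bm{\Sigma}_1^{-2}\tilde{\bm H}/\tilde s^2\bigr)$ in distribution for each $m$, which is the form your moment computation actually uses.
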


In Theorem~\ref{thm:Gaussian} we restrict to Gaussian sketches and consider $\cor(\mu^2( \bm b, \bm S),\mu^2(\tilde{\bm b}, \bm S))$ (rather than the more natural quantity $\cor(\mu( \bm b, \bm S),\mu(\tilde{\bm b}, \bm S))$) in order to make analysis tractable.
In general $\cor(\mu(\bm b, \bm S),\mu(\tilde{\bm b}, \bm S))$ and $\cor(\mu^2(\bm b, \bm S),\mu^2(\tilde{\bm b}, \bm S))$ may have significantly different statistical properties.
However, if either of them is close to $1$, then that would indicate a monotonically increasing (although not necessarily linear) relationship between $\mu(\bm b, \bm S)$ and $\mu(\tilde{\bm b}, \bm S)$, and
when such a relationship holds we expect the boosting procedure in Algorithm~\ref{alg:BFQS} to work well.
While we restrict to Gaussian sketches, this probabilistic model is usually a good indicator of how other sketches perform \citep[Remark~8.2]{martinsson2020RandNLA}. I.e., we expect the result to carry over to the random sampling-based sketches (e.g., leverage scores) that we consider.
We verify this numerically in Section~\ref{sec:numerical-experiments}.
	
\begin{remark}\label{rmk:nu-bound-Gaussian}
  The lower bound in \eqref{mybound1} is useful only when the right-hand side is close to $1$, which roughly requires $\varphi$ to be large and $\kappa$ to be small. See Remark \ref{rmk:nu-bound} for how this condition relates to Theorem \ref{thm:optimality}.
\end{remark}

The rest of this section is organized as follows.
Section~\ref{sec:theoretical-analysis-prelims} derives some preliminary technical results.
Section~\ref{sec:theoretical-analysis-optimality-coeff} then proves Theorem~\ref{thm:optimality}.
Section~\ref{sec:sketch-guarantees} provides theoretical guarantees for when various sketches satisfy the $(\varepsilon, \delta)$ pair condition in Definition~\ref{def:epsilon-delta-condition} and discuss how this condition in turn ensures that those sketching operators satisfy the requirements in Theorem~\ref{thm:optimality}.
The proof of Theorem~\ref{thm:Gaussian} is given in Appendix~\ref{app:a}.

\subsection{Preliminary technical results} \label{sec:theoretical-analysis-prelims}

Our first task is to understand how the optimal residual $r(\bs{A}, \bs{b})$ compares to $r_{\bs{S}}(\bs{A}, \bs{b})$. Throughout this section let $\bm Q = \orth(\bm A)$.
\begin{lemma}\label{lemma:diff-of-residuals}
  Given a sketch matrix $\bm S$, assume $\ker(\bs{S}) \cap \range(\bs{A}) = \{\bs{0}\}$, or, equivalently, $\rank(\bs{S} \bs{A}) = \rank(\bs{A})$. Then we have,
  \begin{equation}
  	r^2_{\bm S}(\bm A, \bm b) = r^2(\bm A, \bm b) + \|(\bm S\bm Q)^\dagger\bm S\bm Q_\perp\bm Q^T_\perp\bm b\|_2^2.
  \end{equation}
\end{lemma}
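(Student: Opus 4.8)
The plan is to decompose the least squares residual using the orthogonal decomposition induced by $\bm Q = \orth(\bm A)$ and its complement $\bm Q_\perp$. First I would write $\bm b = \bm Q \bm Q^T \bm b + \bm Q_\perp \bm Q_\perp^T \bm b$, so that the optimal residual is $r(\bm A, \bm b) = \|\bm Q_\perp \bm Q_\perp^T \bm b\|_2$, since the least squares solution projects $\bm b$ onto $\range(\bm A) = \range(\bm Q)$ and the residual is exactly the $\bm Q_\perp$-component. The sketched problem $\min_{\bm x} \|\bm S \bm A \bm x - \bm S \bm b\|_2$ has solution $\hat{\bm x}$ with $\bm A \hat{\bm x} = \bm Q (\bm S \bm Q)^\dagger \bm S \bm b$ (using $\range(\bm A) = \range(\bm Q)$ and the rank assumption to guarantee $(\bm S \bm Q)^\dagger \bm S \bm Q = \bm I$), so the sketched predictor lives in $\range(\bm Q)$.

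The key computation is to evaluate $r_{\bm S}^2(\bm A, \bm b) = \|\bm A \hat{\bm x} - \bm b\|_2^2$ and show the extra term appears cleanly. I would substitute $\bm A \hat{\bm x} = \bm Q (\bm S \bm Q)^\dagger \bm S \bm b$ and expand $\bm b$ via the orthogonal decomposition. The difference $\bm A \hat{\bm x} - \bm b$ splits into a component in $\range(\bm Q)$, namely $\bm Q\big[(\bm S \bm Q)^\dagger \bm S \bm b - \bm Q^T \bm b\big]$, and the component $-\bm Q_\perp \bm Q_\perp^T \bm b$ orthogonal to it. By the Pythagorean theorem these two pieces add in squared norm, giving $r_{\bm S}^2 = \|\bm Q_\perp \bm Q_\perp^T \bm b\|_2^2 + \|(\bm S \bm Q)^\dagger \bm S \bm b - \bm Q^T \bm b\|_2^2$, where I used that $\bm Q$ has orthonormal columns in the second term.

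The remaining step is to simplify $(\bm S \bm Q)^\dagger \bm S \bm b - \bm Q^T \bm b$. Here I would again insert $\bm b = \bm Q \bm Q^T \bm b + \bm Q_\perp \bm Q_\perp^T \bm b$ and use the rank condition, which gives $(\bm S \bm Q)^\dagger \bm S \bm Q = \bm I$. Then the $\bm Q \bm Q^T \bm b$ contribution yields $(\bm S \bm Q)^\dagger \bm S \bm Q \bm Q^T \bm b = \bm Q^T \bm b$, which cancels the $-\bm Q^T \bm b$ term exactly, leaving $(\bm S \bm Q)^\dagger \bm S \bm Q_\perp \bm Q_\perp^T \bm b$. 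Substituting back recovers precisely the claimed identity $r_{\bm S}^2(\bm A, \bm b) = r^2(\bm A, \bm b) + \|(\bm S \bm Q)^\dagger \bm S \bm Q_\perp \bm Q_\perp^T \bm b\|_2^2$.

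The main obstacle is justifying the two facts that hinge on the rank assumption: that $\hat{\bm x}$ satisfies $\bm A \hat{\bm x} = \bm Q (\bm S \bm Q)^\dagger \bm S \bm b$ and that $(\bm S \bm Q)^\dagger \bm S \bm Q = \bm I$. Both follow because $\rank(\bm S \bm Q) = \rank(\bm Q) = \rank(\bm A)$ means $\bm S \bm Q$ has full column rank, so its pseudoinverse is a genuine left inverse; I would verify $\ker(\bm S) \cap \range(\bm A) = \{\bm 0\}$ is equivalent to this full-column-rank condition. The rest is bookkeeping with orthogonal projections, and the Pythagorean split is the conceptual heart that makes the cross terms vanish.
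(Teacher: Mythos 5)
Your proposal is correct and takes essentially the same route as the paper's proof: decompose $\bm b$ orthogonally into its $\range(\bm Q)$ and $\range(\bm Q_\perp)$ components, use the rank condition to show the sketched solution reproduces the $\range(\bm A)$-component of $\bm b$ exactly, and conclude by the Pythagorean theorem. The only cosmetic difference is that you reparametrize the sketched problem through $\bm Q$ and invoke the left-inverse identity $(\bm S\bm Q)^\dagger \bm S\bm Q = \bm I$ directly, whereas the paper phrases the same fact as the reproduction property $\bm A(\bm S\bm A)^\dagger \bm S\bm c = \bm c$ for all $\bm c \in \range(\bm A)$.
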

\begin{proof}
Under the assumption $\ker(\bs{S}) \cap \range(\bs{A}) = \{\bs{0}\}$, the sketched least squares problem reproduces elements of $\range(\bs{A})$:
For any $\bs{c} \in \range(\bs{A})$,
\begin{equation}\label{eq:sketch-reproduction}
  \bs{A} (\bm S\bm A)^\dagger\bm S \bs{c} = \bs{c}.
\end{equation}
The solution to the sketched least squares problem \eqref{eq:xast-def} is $(\bm S\bm A)^\dagger\bm S\bm b$. 
Combining this fact with \eqref{eq:r_S} and \eqref{eq:sketch-reproduction} yields
\begin{equation}
  r^2_{\bm S}(\bm A, \bm b) 
  = \|\bm b - \bm A(\bm S\bm A)^\dagger\bm S\bm b\|_2^2
  =  \|\bm b - \bm A(\bm S\bm A)^\dagger\bm S(\bm Q\bm Q^T+\bm Q_\perp\bm Q_\perp^T)\bm b\|_2^2 
  = r^2(\bm A, \bm b) + \|(\bm S\bm Q)^\dagger\bm S\bm Q_\perp\bm Q^T_\perp\bm b\|_2^2.
\end{equation}
\end{proof}

We conclude that $r_{\bm S}(\bm A, \bm b)$ is comparable to $r(\bs{A}, \bs{b})$ if and only if $\|(\bm S\bm Q)^\dagger\bm S\bm Q_\perp\bm Q^T_\perp\bm b\|_2^2$ is small. 

The quantities $\nu$, $\varphi$ and $\kappa$ defined in \eqref{mynu} and \eqref{eq:phi-kappa} are related by the following inequality.
\begin{prop}\label{prop:cor}
	Assume $\varphi\geq\kappa$. Then we have the two inequalities,
        \begin{align}
          \nu&\geq \varphi - \kappa\min\left\{1,\sqrt{2(1-\varphi+\kappa)}\right\}.\label{nu}
        \\\label{nu2}
                \nu&\geq \varphi - (\varphi\tilde{\kappa} + \sqrt{1-\varphi^2})\min\left\{1,\sqrt{2(1-\varphi+\varphi\tilde{\kappa} + \sqrt{1-\varphi^2})}\right\}. 
        \end{align}
        where 
	\begin{equation} \label{kshsy}
		\tilde{\kappa} \defeq \frac{\|\bs{P}_{\bs Q}\tilde{\bm b}\|_2}{\|\tilde{\bm b}\|_2},
	\end{equation}
        measures the relative energy of the \textit{low-fidelity} vector in the range of $\bs{A}$.
\end{prop}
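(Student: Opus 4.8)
The plan is to prove the sharper bound \eqref{nu} first and then deduce \eqref{nu2} from it by a monotonicity argument. Since $\nu$, $\varphi$, $\kappa$, $\tilde{\kappa}$ are all invariant under rescaling $\bm b$ and $\tilde{\bm b}$, I would first normalize so that $\|\bm b\|_2 = \|\tilde{\bm b}\|_2 = 1$, and replace $\tilde{\bm b}$ by $-\tilde{\bm b}$ if necessary so that $\langle \bm b, \tilde{\bm b}\rangle = \varphi \ge 0$ (this changes none of the four quantities). Writing $\bm u = \bm P_{\bm Q_\perp}\bm b$ and $\bm v = \bm P_{\bm Q_\perp}\tilde{\bm b}$, Assumption~\ref{ass:b-range} guarantees $\bm u, \bm v \ne \bm 0$, and the Pythagorean identity gives $\|\bm u\|_2 = \sqrt{1-\kappa^2}$ and $\|\bm v\|_2 = \sqrt{1-\tilde{\kappa}^2}$, so that $\nu = |\langle \bm u, \bm v\rangle|/(\|\bm u\|_2\|\bm v\|_2)$.

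Next I would use the orthogonal splitting $\bm I = \bm P_{\bm Q} + \bm P_{\bm Q_\perp}$ to write $\langle \bm u, \bm v\rangle = \langle \bm b, \tilde{\bm b}\rangle - \langle \bm P_{\bm Q}\bm b, \bm P_{\bm Q}\tilde{\bm b}\rangle$, and bound the second term by Cauchy--Schwarz, yielding $\langle \bm u, \bm v\rangle \ge \varphi - \kappa\tilde{\kappa}$. Because $\|\bm u\|_2\|\bm v\|_2 \le 1$, it follows that whenever the right-hand side is nonnegative, $\nu \ge \langle \bm u, \bm v\rangle \ge \varphi - \kappa\tilde{\kappa}$, and when it is negative the claimed bound holds trivially since $\nu \ge 0$. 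Thus \eqref{nu} reduces to the purely scalar claim $\tilde{\kappa} \le \min\{1, \sqrt{2(1-\varphi+\kappa)}\}$.

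This scalar claim is where the real work sits, and I expect it to be the main obstacle. The bound $\tilde{\kappa} \le 1$ is immediate. For the other piece, I would start from $\|\bm b - \tilde{\bm b}\|_2^2 = 2(1-\varphi)$ (expanding with both vectors of unit length), and apply the triangle inequality to $\bm P_{\bm Q}\tilde{\bm b} = \bm P_{\bm Q}\bm b - \bm P_{\bm Q}(\bm b - \tilde{\bm b})$ together with the fact that projections are contractions, obtaining $\tilde{\kappa} \le \kappa + \sqrt{2(1-\varphi)}$. After squaring, it suffices to verify $\kappa + 2\sqrt{2(1-\varphi)} \le 2$ in the regime where $\sqrt{2(1-\varphi+\kappa)} \le 1$, i.e.\ $\varphi - \kappa \ge 1/2$. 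In that regime $2(1-\varphi) \le 1 - 2\kappa$ (and $\kappa \le 1/2$), so $\kappa + 2\sqrt{2(1-\varphi)} \le \kappa + 2\sqrt{1-2\kappa}$, and a one-variable check (the map $\kappa \mapsto \kappa + 2\sqrt{1-2\kappa}$ is decreasing on $[0,1/2]$ with value $2$ at $\kappa=0$) closes the estimate. This delivers $\tilde{\kappa}^2 \le 2(1-\varphi+\kappa)$ precisely when the minimum equals $\sqrt{2(1-\varphi+\kappa)}$, which is all that is needed, and completes \eqref{nu}.

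Finally, to obtain \eqref{nu2} I would show $\kappa \le \varphi\tilde{\kappa} + \sqrt{1-\varphi^2} =: \alpha$ by decomposing $\bm b = \varphi\,\tilde{\bm b} + \bm w$ with $\bm w \perp \tilde{\bm b}$ and $\|\bm w\|_2 = \sqrt{1-\varphi^2}$, then applying $\bm P_{\bm Q}$ and the triangle inequality. Since $x \mapsto x\min\{1, \sqrt{2(1-\varphi+x)}\}$ is nondecreasing for $x \ge 0$, the map $x \mapsto \varphi - x\min\{1, \sqrt{2(1-\varphi+x)}\}$ is nonincreasing; evaluating \eqref{nu} at $\kappa$ and using $\alpha \ge \kappa$ gives $\nu \ge \varphi - \kappa\min\{1,\sqrt{2(1-\varphi+\kappa)}\} \ge \varphi - \alpha\min\{1,\sqrt{2(1-\varphi+\alpha)}\}$, which is exactly \eqref{nu2}. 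The hypothesis $\varphi \ge \kappa$ is not essential to the derivation but ensures the resulting bounds are non-vacuous.
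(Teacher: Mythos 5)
Your proof is correct, and its skeleton coincides with the paper's: after the same normalization, both arguments reduce \eqref{nu} to the estimate $\nu \geq \varphi - \kappa\tilde{\kappa}$ (orthogonal splitting of $\langle \bm b, \tilde{\bm b}\rangle$ plus Cauchy--Schwarz, using $\|\bm P_{\bm Q_\perp}\bm b\|_2 \|\bm P_{\bm Q_\perp}\tilde{\bm b}\|_2 \le 1$), and then to the scalar claim $\tilde{\kappa} \le \min\{1, \sqrt{2(1-\varphi+\kappa)}\}$; likewise, both deduce \eqref{nu2} from \eqref{nu} by proving $\kappa \le \varphi\tilde{\kappa} + \sqrt{1-\varphi^2}$ via the same rank-one decomposition of $\bm b$ along $\tilde{\bm b}$ and then invoking monotonicity of the right-hand side of \eqref{nu} in $\kappa$. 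Where you genuinely diverge is the proof of the scalar claim, which is indeed where the real work sits. The paper exploits the Pythagorean constraint on $\tilde{\bm b}$: from $\varphi \le \kappa + \sqrt{1-\tilde{\kappa}^2}$ (Cauchy--Schwarz on both components of $\langle \bm b, \tilde{\bm b}\rangle$) and the hypothesis $\varphi \ge \kappa$, it rearranges to the sharper bound $\tilde{\kappa} \le \sqrt{1-(\varphi-\kappa)^2} \le \sqrt{2(1-\varphi+\kappa)}$ in one step, with no case distinction. You instead prove the weaker intermediate bound $\tilde{\kappa} \le \kappa + \sqrt{2(1-\varphi)}$ (contractivity of $\bm P_{\bm Q}$ applied to $\bm P_{\bm Q}\tilde{\bm b} = \bm P_{\bm Q}\bm b - \bm P_{\bm Q}(\bm b - \tilde{\bm b})$), and must then split into regimes and run the one-variable check on $\kappa \mapsto \kappa + 2\sqrt{1-2\kappa}$ to close the square when the minimum equals the square root; I verified this check and it is sound. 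The paper's route is shorter and yields a stronger intermediate inequality, while yours buys a small dividend: it never squares the inequality $\varphi - \kappa \le \sqrt{1-\tilde{\kappa}^2}$, so, as you correctly note, the hypothesis $\varphi \ge \kappa$ is never actually invoked (your square-root branch already forces $\varphi - \kappa \ge 1/2$), whereas the paper's rearrangement step does rely on $\varphi \ge \kappa$ for the squaring to be legitimate.
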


\begin{proof}
  We first prove \eqref{nu}. Since correlation coefficients are scale-invariant, without loss of generality we assume $\|\bm b\|_2 = \|\tilde{\bm b}\|_2 = 1$. 
Write down the orthogonal decomposition of $\bm b$ and $\tilde{\bm b}$ in $\bm Q\oplus\bm Q_\perp$ as follows:
\begin{equation}
\begin{aligned}
	\bm b &= \underbrace{\bm P_{\bm Q}\bm b}_{\bm b_1}+\underbrace{\bm P_{\bm Q_\perp}\bm b}_{\bm b_2}, \\ 
	\tilde{\bm b} &= \underbrace{\bm P_{\bm Q}\tilde{\bm b}}_{\tilde{\bm b}_1}+\underbrace{\bm P_{\bm Q_\perp}\tilde{\bm b}}_{\tilde{\bm b}_2}.
\end{aligned}
\end{equation}
Notice that $\|\bm b_1\|_2^2 + \|\bm b_2\|_2^2 =\|\tilde{\bm b}_1 \|_2^2 + \|\tilde{\bm b}_2 \|_2^2 = 1$.
It follows from the Cauchy--Schwarz inequality and the definitions in \eqref{mynu} and \eqref{eq:phi-kappa} that
\begin{equation} \label{cbb}
  \nu 
  = \frac{|\langle\bm b_2, \tilde{\bm b}_2 \rangle|}{\|\bm b_2\|_2\|\tilde{\bm b}_2 \|_2} 
  \geq |\langle\bm b, \tilde{\bm b}\rangle-\langle\bm b_1, \tilde{\bm b}_1 \rangle| 
  \geq \varphi - \|\bm b_1\|_2\|\tilde{\bm b}_1 \|_2
  = \varphi - \kappa\|\tilde{\bm b}_1 \|_2
  \geq \varphi - \kappa.
\end{equation}
The last inequality can be replaced by a more accurate estimate for $\|\tilde{\bm b}_1 \|_2$:
\begin{equation} \label{bbc}
  	\varphi= |\langle \bm b, \tilde{\bm b}\rangle| = |\langle \bm b_1, \tilde{\bm b}_1 \rangle+\langle \bm b_2, \tilde{\bm b}_2 \rangle|
  	\leq \|\bm b_1\|_2\|\tilde{\bm b}_1 \|_2 + \|\bm b_2\|_2\|\tilde{\bm b}_2 \|_2 
	\leq \kappa + \|\tilde{\bm b}_2 \|_2 = \sqrt{1-\|\tilde{\bm b}_1 \|_2^2}+ \kappa,
\end{equation}
which can be reorganized as
\begin{equation} \label{ccc}
	\|\tilde{\bm b}_1 \|_2\leq\sqrt{1-(\varphi-\kappa)^2} = \sqrt{(1-\varphi+\kappa)(1+\varphi -\kappa)}\leq \sqrt{2(1-\varphi + \kappa)}.
\end{equation}
  Combining \eqref{cbb} and \eqref{ccc} finishes the proof of \eqref{nu}.

  To show \eqref{nu2}, we again assume $\|\bm b\|_2 = \|\tilde{\bm b}\|_2 = 1$, so that,
	\begin{equation}
		\kappa 
		= \|\bs{P}_{\bs Q}\bs{b}\|_2
		= \|\bs{P}_{\bs Q}(\bm P_{\tilde{\bm b}} \bm b + \bm b - \bm P_{\tilde{\bm b}} \bm b)\|_2
		\leq \varphi\|\bs P_{\bs Q}\tilde{\bm b}\|_2 + \|\bs b-\bm P_{\tilde{\bs b}}\bs b\|_2
		= \varphi\tilde{\kappa} +\sqrt{1-\varphi^2}.
	\end{equation}
  Plugging this into \eqref{nu} and noting that the right-hand side of \eqref{nu} is decreasing in $\kappa$ yields \eqref{nu2}.
\end{proof}

The main appeal of \eqref{nu2} is that the quantity $\tilde{\kappa}$ involves only low-fidelity data, and hence can be estimated. I.e., \eqref{nu2} gives a more practically computable lower bound for $\nu$, involving one quantity $\tilde{\kappa}$ that depends only on low-fidelity data $\tilde{\bm b}$, and the correlation $\varphi$ between $\bm b$ and $\tilde{\bm b}$.
\begin{remark}\label{rmk:nu-bound}
  Recall that our main convergence result, Theorem \ref{thm:optimality}, has more attractive bounds when $\nu$ is large. By \eqref{nu}, $\nu$ is large if $\varphi\approx 1$ and $\varphi\gg\kappa$, which coincides with sufficient conditions to ensure attractive bounds in \eqref{mybound1} in Theorem \ref{thm:Gaussian}. (Cf. Remark \ref{rmk:nu-bound-Gaussian}.) Thus, $\varphi \gg \kappa$ is a unifying condition under which both of our main theoretical results, Theorem \ref{thm:optimality} and Theorem \ref{thm:Gaussian}, provide useful bounds. The condition $\varphi \gg \kappa$ means that the correlation between $\bs{b}$ and $\tilde{\bs{b}}$ is high and strongly dominates the relative energy of $\bs{b}$ in $\range(\bs{A})$.
	This condition may seem counterintuitive as it requires the high-fidelity solution to have a relatively large residual.
	Since $\mu$ is defined relative to $r(\bm A, \bm b)$, a small $r_{\bm S_{\ell^*}}(\bm A, \bm b)$ may still result in a large $\mu (\bm b, \bm S_{\ell^*})$ even if $r_{\bm S_{\ell^*}}(\bm A, \bm b)$ is small but relatively large compared to $r(\bm A, \bm b)$.
\end{remark}

\subsection{Proof of Theorem~\ref{thm:optimality}} \label{sec:theoretical-analysis-optimality-coeff}

We first consider the case $\cor(\bm P_{\bm Q_\perp}\bm b, \bm P_{\bm Q_\perp}\tilde{\bm b})\geq 0$.
Fixing $\ell\in [L]$, $\bm S = \bm S_\ell$, consider the event $E$ of probability at least $1 - \delta/L$ where the rank condition in \eqref{mypair} holds. On this event, this rank condition with Lemma \ref{lemma:diff-of-residuals} implies that,
\begin{align*}
  r^2_{\bm S}(\bm A, \bm b) - r^2(\bm A, \bm b) = \|(\bm S\bm Q)^\dagger\bm S\bm Q_\perp\bm Q^T_\perp\bm b\|_2^2,
\end{align*}
allowing us to directly estimate the difference between $\mu(\bm b, \bm S)$ and $\mu(\tilde{\bm b}, \bm S)$ as follows:
\begin{equation}
\begin{aligned} \label{erf}
	|\mu(\bm b, \bm S) - \mu(\tilde{\bm b}, \bm S)|
	&=\left|\frac{\|(\bm S\bm Q)^\dagger\bm S\bm Q_\perp\bm Q^T_\perp\bm b\|_2}{\|\bm Q_\perp\bm Q^T_\perp\bm b\|_2}-\frac{\|(\bm S\bm Q)^\dagger\bm S\bm Q_\perp\bm Q^T_\perp\tilde{\bm b}\|_2}{\|\bm Q_\perp\bm Q^T_\perp\tilde{\bm b}\|_2}\right|\\
	&\leq \left\|(\bm S\bm Q)^\dagger\bm S\left((\bm P_{\bm Q_\perp}\bm b)_\PP-(\bm P_{\bm Q_\perp}\tilde{\bm b})_\PP\right)\right\|_2\\
	&= \|(\bm P_{\bm Q_\perp}\bm b)_\PP-(\bm P_{\bm Q_\perp}\tilde{\bm b})_\PP\|_2\|(\bm S\bm Q)^\dagger\bm S\bm h\|_2\\
	& =  \sqrt{\|(\bm P_{\bm Q_\perp}\bm b)_\PP\|^2_2+\|(\bm P_{\bm Q_\perp}\tilde{\bm b})_\PP\|^2_2-2\langle(\bm P_{\bm Q_\perp}\bm b)_\PP, (\bm P_{\bm Q_\perp}\tilde{\bm b})_\PP\rangle} \; \|(\bm S\bm Q)^\dagger\bm S\bm h\|_2\\
	& = \sqrt{2-2\nu}\cdot\|(\bm S\bm Q)^\dagger\bm S\bm h\|_2\\
	& = \sqrt{2-2\nu}\cdot\|\bm Q(\bm S\bm Q)^\dagger\bm S\bm h\|_2,
\end{aligned}
\end{equation}
where the first inequality follows from the reverse triangle inequality, the second to last equality follows \eqref{mynu}, and the final equality follows from unitary invariance of the operator norm.
The case $\cor(\bm P_{\bm Q_\perp}\bm b, \bm P_{\bm Q_\perp}\tilde{\bm b})< 0$ can be treated similarly by noting that the inequality on the second line of \eqref{erf} still holds if the minus sign on the right-hand side is changed to a plus sign.
The rest of the computation is then done similarly to the case with non-negative correlation.

Note that $(\bm S\bm Q)^\dagger\bm S\bm h$ is the $\bm S$-sketched least squares solution to $\min_{\bs{x}} \| \bm Q\bm x - \bm h \|_2$. 
Also, note that $\bs{h} \in \range(\bs{Q}_\perp)$.
Using the residual bound in \eqref{mypair},
the following also holds on our probabilistic event $E$:
\begin{equation}
	\|\bm Q(\bm S\bm Q)^\dagger\bm S\bm h\|_2^2 + \|\bm h\|_2^2 = \|\bm Q(\bm S\bm Q)^\dagger\bm S\bm h - \bm h\|_2^2\leq (1+\e)^2\min_{\bm x\in\R^d}\|\bm Q\bm x-\bm h\|^2_2 = (1+\e)^2\|\bm h\|_2^2.
\end{equation}
Rearranging terms and noting $\|\bm h\|_2 = 1$ yields $\|\bm Q(\bm S\bm Q)^\dagger\bm S\bm h\|\leq \sqrt{3\e}$, which is substituted into \eqref{erf}, implying that on an event $E$ with probability at least $1 - \delta/L$, we have
\begin{equation}
	|\mu(\bm b, \bm S) - \mu(\tilde{\bm b}, \bm S)|\leq \sqrt{6(1-\nu)\e}.
\end{equation}
Taking a union bound over $\ell\in [L]$ yields that, with probability at least $1-\delta$,
\begin{equation}
	\max_{\ell\in [L]}|\mu(\bm b, \bm S_\ell) - \mu(\tilde{\bm b}, \bm S_\ell)|\leq \sqrt{6(1-\nu)\e}.\label{goodevent}
\end{equation}
Conditioning on the probabilistic event in \eqref{goodevent} and using the definition of $\ell^*$ and $\ell^{**}$ finishes the proof: 
\begin{equation}
	\mu(\bm b, \bm S_{\ell^*}) \leq \mu(\tilde{\bm b}, \bm S_{\ell^*}) + \sqrt{6(1-\nu)\e}\leq \mu(\tilde{\bm b}, \bm S_{\ell^{**}}) + \sqrt{6(1-\nu)\e} \leq \mu(\bm b, \bm S_{\ell^{**}}) + 2\sqrt{6(1-\nu)\e}.
\end{equation}

\subsection{Achieving the \texorpdfstring{$(\e,\delta)$}{(epsilon,delta)} pair condition} \label{sec:sketch-guarantees}

We next show that, for a variety of random sketches of interest, the $(\e, \frac{\delta}{L})$ pair condition for $(\bm Q, \bm h)$ in Theorem \ref{thm:optimality} holds for sufficiently large $m$. 
We begin with a lemma that gives a sufficient condition for verification of the $(\e, \frac{\delta}{L})$ pair condition for $(\bm Q, \bm h)$, which can be deduced as a special case from \cite[Lemma 1]{drineas2011FasterLeast}:

\begin{lemma}[\citet{drineas2011FasterLeast}]\label{lem:Drineas2011}
Let $\bs{Q}$ and $\bs{h}$ be defined as in Theorem~\ref{thm:optimality}. 
The distribution of $\bm S$ is an $(\e, \frac{\delta}{L})$ pair for $(\bm Q, \bm h)$ if the following two conditions hold simultaneously with probability at least $1-\delta/L$:
\begin{equation} \label{mycond}
	\sigma_{\min}^2 (\bm S\bm Q) \geq \frac{\sqrt{2}}{2} 
	\;\;\;\; \text{and}	\;\;\;\; 
	\|\bm Q^T\bm S^T\bm S\bm h\|_2^2\leq\frac{\e}{2},
\end{equation}
where $\sigma_{\min}(\cdot )$ denotes the smallest singular value of a matrix. 
\end{lemma}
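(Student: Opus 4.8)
The plan is to verify directly, on the event of probability at least $1-\delta/L$ where both inequalities in \eqref{mycond} hold, the two requirements of the $(\e,\frac{\delta}{L})$ pair condition from Definition~\ref{def:epsilon-delta-condition} applied to the pair $(\bm Q,\bm h)$. The rank requirement comes for free: since $\bm Q=\orth(\bm A)$ has orthonormal columns it has full column rank, and the hypothesis $\sigma_{\min}^2(\bm S\bm Q)\geq \frac{\sqrt 2}{2}>0$ forces $\bm S\bm Q$ to have full column rank as well, so $\rank(\bm S\bm Q)=\rank(\bm Q)$. It then remains only to establish the residual inequality $r_{\bm S}(\bm Q,\bm h)\leq (1+\e)\,r(\bm Q,\bm h)$.

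For the residual bound I would first record two consequences of the definition of $\bm h$ in \eqref{myh}: it is a unit vector lying in $\range(\bm Q_\perp)$, so $\bm Q^T\bm h=\bm 0$ and $\bm Q_\perp\bm Q_\perp^T\bm h=\bm h$. The first of these shows the minimizer of $\min_{\bm x}\|\bm Q\bm x-\bm h\|_2$ is $\bm x=\bm 0$, whence $r(\bm Q,\bm h)=\|\bm h\|_2=1$. Applying Lemma~\ref{lemma:diff-of-residuals} with the design matrix taken to be $\bm Q$ itself (whose orthonormalization is again $\bm Q$), and using $\bm Q_\perp\bm Q_\perp^T\bm h=\bm h$, collapses the residual identity into the clean form $r_{\bm S}^2(\bm Q,\bm h)=1+\|(\bm S\bm Q)^\dagger\bm S\bm h\|_2^2$. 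Thus the entire task reduces to bounding the single scalar $\|(\bm S\bm Q)^\dagger\bm S\bm h\|_2$.

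To control that quantity I would use the full column rank of $\bm S\bm Q$ to write $(\bm S\bm Q)^\dagger=((\bm S\bm Q)^T\bm S\bm Q)^{-1}(\bm S\bm Q)^T$, so that $(\bm S\bm Q)^\dagger\bm S\bm h=((\bm S\bm Q)^T\bm S\bm Q)^{-1}\bm Q^T\bm S^T\bm S\bm h$. Submultiplicativity of the spectral norm then gives $\|(\bm S\bm Q)^\dagger\bm S\bm h\|_2\leq \|((\bm S\bm Q)^T\bm S\bm Q)^{-1}\|_2\,\|\bm Q^T\bm S^T\bm S\bm h\|_2$, where the first factor equals $\sigma_{\min}^{-2}(\bm S\bm Q)$. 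Inserting the two hypotheses $\sigma_{\min}^{-2}(\bm S\bm Q)\leq \sqrt 2$ and $\|\bm Q^T\bm S^T\bm S\bm h\|_2\leq \sqrt{\e/2}$ yields $\|(\bm S\bm Q)^\dagger\bm S\bm h\|_2\leq \sqrt 2\cdot\sqrt{\e/2}=\sqrt{\e}$. Feeding this back gives $r_{\bm S}^2(\bm Q,\bm h)\leq 1+\e\leq (1+\e)^2=(1+\e)^2\,r^2(\bm Q,\bm h)$, which is exactly the required residual bound; combined with the rank condition this certifies the $(\e,\frac{\delta}{L})$ pair condition on the stated event.

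The argument is largely bookkeeping once the reduction is in place; the step I regard as the crux is the correct invocation of Lemma~\ref{lemma:diff-of-residuals} for the auxiliary pair $(\bm Q,\bm h)$ rather than $(\bm A,\bm b)$, together with the observations $r(\bm Q,\bm h)=1$ and $\bm Q_\perp\bm Q_\perp^T\bm h=\bm h$ that produce the clean identity $r_{\bm S}^2(\bm Q,\bm h)=1+\|(\bm S\bm Q)^\dagger\bm S\bm h\|_2^2$. The only genuinely quantitative checkpoint is confirming that the constants $\frac{\sqrt 2}{2}$ and $\frac{\e}{2}$ in \eqref{mycond} are calibrated so the product of the two bounds lands at $\sqrt{\e}$, keeping the residual inflation within the factor $(1+\e)$.
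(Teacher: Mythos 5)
Your proposal is correct. Note, however, that the paper never proves this lemma in-text: it simply cites it as a special case of Lemma~1 in \citet{drineas2011FasterLeast}, so your argument is a genuinely self-contained reconstruction rather than a parallel of something written in the paper. What you do is essentially the Drineas--Mahoney--Muthukrishnan argument, but specialized and streamlined using the paper's own machinery: because $\bm h$ is a \emph{unit} vector lying in $\range(\bm Q_\perp)$ and the design matrix is already orthonormal, you get $r(\bm Q,\bm h)=1$ and $\bm Q_\perp\bm Q_\perp^T\bm h=\bm h$, so Lemma~\ref{lemma:diff-of-residuals} collapses to the identity $r_{\bm S}^2(\bm Q,\bm h)=1+\|(\bm S\bm Q)^\dagger\bm S\bm h\|_2^2$, and the two hypotheses in \eqref{mycond} feed through the normal-equations factorization $(\bm S\bm Q)^\dagger\bm S\bm h=((\bm S\bm Q)^T\bm S\bm Q)^{-1}\bm Q^T\bm S^T\bm S\bm h$ to give $\|(\bm S\bm Q)^\dagger\bm S\bm h\|_2\le\sqrt{2}\cdot\sqrt{\e/2}=\sqrt{\e}$, hence $r_{\bm S}^2\le 1+\e\le(1+\e)^2 r^2$. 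All steps check out, including the rank condition (full column rank of $\bm S\bm Q$ from $\sigma_{\min}^2(\bm S\bm Q)\ge\tfrac{\sqrt{2}}{2}>0$) and the calibration of the constants. What your route buys is a proof entirely internal to the paper's notation and lemmas, making explicit why the specific constants $\tfrac{\sqrt{2}}{2}$ and $\tfrac{\e}{2}$ suffice; what the paper's citation route buys is brevity and applicability of the general result (arbitrary $\bm A$, $\bm b$ not orthogonal to $\range(\bm A)$), of which this lemma is the special case $r(\bm Q,\bm h)=1$.
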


When the conditions in Lemma \ref{lem:Drineas2011} hold, one can directly bound \eqref{erf} using the submultiplicativity of operator norms instead of resorting to an $(\e, \delta)$ argument as in the proof of Theorem \ref{thm:optimality}, although the latter is more general. 
Theorem~\ref{thm:sketches} presents constructive strategies for generating sketch distributions -- based on sub-Gaussian random variables and leverage scores -- that achieve appropriate $(\e, \delta)$ pair conditions. 
We recall that a random variable $X$ is called sub-Gaussian if, for some $K > 0$ we have $\E \exp(X^2/K^2) \leq 2$
\citep[Def.~2.5.6]{vershynin2018HighdimensionalProbability}. 
The sub-Gaussian norm of $X$ is defined as 
$\Vert X\Vert_{\psi_2}\defeq\inf\left\{K> 0\;:\; \mathbb{E}\exp(X^2/K^2)\leq 2\right\}$
~\citep{vershynin2018HighdimensionalProbability}. A proof of Theorem~\ref{thm:sketches} is give in Appendix~\ref{app:a0}.
Variants of these results have appeared previously in the literature \cite{drineas2006SamplingAlgorithms, drineas2008RelativeerrorCUR, drineas2011FasterLeast, larsen2020PracticalLeverageBased}.

\begin{theorem}\label{thm:sketches}
Let $\bs{Q}$ and $\bs{h}$ be defined as in Theorem~\ref{thm:optimality}. 
Write $\bm Q$ and $\bm S^T$ as column vectors: 
\begin{equation}
	\bm Q = [\bm q_1 , \cdots, \bm q_d], \;\;\;\;  \bm S^T = [\bm s_1, \cdots, \bm s_m], 
\end{equation}
and denote by $q_{ij}\defeq \bm q_i(j)$ and $h_{j}\defeq \bm h(j)$ the $j$-th component of $\bm q_i$ and $\bm h$, respectively. 
\begin{enumerate}
\item Suppose $\bm S\in\R^{m\times N}$ is a dense sketch whose entries are i.i.d.\ sub-Gaussian random variables with mean 0 and variance $1/m$.
Assume the sub-Gaussian norm of each entry of $\sqrt{m}\bm S$ is bounded by $K\geq 1$. 
Then the distribution of $\bm S$ is an $(\e, \frac{\delta}{L})$ pair for $(\bm Q, \bm h)$ if 
\begin{equation}
	m \geq \frac{CK^4}{\e}d\log \left(\frac{4dL}{\delta}\right),\label{subgau}
\end{equation}
where $C$ is an absolute constant.  
\item\label{itm:lev-score} Suppose $\bm S\in\R^{m\times N}$ is a row sketch based on the leverage scores of $\bm A$, and $0<\e, \delta<1/2$; see Equation~\eqref{eq:lev-score-sketch}. Then the distribution of $\bm S$ is an $(\e, \frac{\delta}{L})$ pair for $(\bm Q, \bm h)$ if 
\begin{equation}
	m\geq \max\left\{35d\log\left(\frac{4dL}{\delta}\right), \frac{2dL}{\e\delta}\right\}.\label{lop}
\end{equation}
Moreover, if
\begin{equation}
 	\max_{i\in [d]}\max_{j\in [N]: \ell_j>0}\frac{d| q_{ij} h_j|}{\ell_j}\leq C, \;\;\;\; \ell_j = \sum_{k\in [d]}q^2_{kj}\label{pang}
\end{equation}
for some constant $C>0$, then the distribution of $\bm S$ is an $(\e, \frac{\delta}{L})$ pair for $(\bm Q, \bm h)$ if
\begin{equation}
	m\geq \max\left\{35, \frac{4C^2}{\e}\right\}d\log \left(\frac{4dL}{\delta}\right).\label{lop_more}
\end{equation}
\end{enumerate}
\end{theorem}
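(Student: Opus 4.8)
The plan is to verify, for each sketch family, the two sufficient conditions isolated in Lemma~\ref{lem:Drineas2011}: the approximate-isometry bound $\sigma_{\min}^2(\bm S\bm Q)\geq\sqrt2/2$ and the orthogonality-preservation bound $\|\bm Q^T\bm S^T\bm S\bm h\|_2^2\leq\e/2$. Since Lemma~\ref{lem:Drineas2011} requires that \eqref{mycond} hold \emph{simultaneously} with probability at least $1-\delta/L$, I would establish each of the two bounds separately with a suitable fraction of the failure budget $\delta/L$ and combine them by a union bound. The structural fact that drives the second bound is that $\bm h\in\range(\bm Q_\perp)$ by \eqref{myh}, so $\bm Q^T\bm h=\bm 0$; thus $\bm Q^T\bm S^T\bm S\bm h=\bm Q^T(\bm S^T\bm S-\bm I)\bm h$ is mean-zero, and the whole question becomes how well $\bm S$ preserves the orthogonality between $\range(\bm Q)$ and $\bm h$.

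For the isometry bound I would study $(\bm S\bm Q)^T(\bm S\bm Q)$, noting that $\E[(\bm S\bm Q)^T(\bm S\bm Q)]=\bm I_d$ for both sketch types. In the leverage-score case \eqref{eq:lev-score-sketch}, this matrix is a sum of $m$ independent positive semidefinite rank-one terms, each of operator norm exactly $d/m$ (because the sampling probabilities are proportional to the squared row norms of $\bm Q$); a matrix Chernoff bound then yields a \emph{constant}-distortion embedding, which suffices for the threshold $\sqrt2/2$, once $m\gtrsim d\log(4dL/\delta)$, producing the first term of the $\max$ in \eqref{lop} with the explicit constant $35$. In the sub-Gaussian case this is the standard subspace-embedding (matrix deviation) estimate, needing only $m\gtrsim K^4 d\log(4dL/\delta)$, which is dominated by the requirement \eqref{subgau} coming from the second condition.

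The orthogonality bound is where the three stated complexities diverge, and I expect it to be the main obstacle. For leverage scores I would write $\bm Q^T\bm S^T\bm S\bm h=\sum_{j=1}^m\bm X_j$ with $\bm X_j$ i.i.d.\ mean-zero, compute $\E\|\sum_j\bm X_j\|_2^2=\frac{d}{m}\|\bm h\|_2^2=d/m$ using $\|\bm h\|_2=1$, and conclude via Markov's inequality; driving the failure probability below the allotted fraction of $\delta/L$ gives the $m\gtrsim dL/(\e\delta)$ term in \eqref{lop}. The improvement \eqref{lop_more} comes from the extra hypothesis \eqref{pang}: it bounds each coordinate of $\bm X_j$ by $C/m$, which converts the lossy second-moment/Markov argument into a coordinatewise Bernstein inequality followed by a union bound over the $d$ coordinates, upgrading the $1/(\e\delta)$ factor to $\log(4dL/\delta)/\e$. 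For the sub-Gaussian sketch the coordinate $\bm q_i^T\bm S^T\bm S\bm h=\langle\bm S\bm q_i,\bm S\bm h\rangle$ has mean zero precisely because $\bm q_i\perp\bm h$, and I would control it with a Hanson--Wright-type bound, obtaining sub-exponential concentration at scale $K^2/\sqrt m$; a union bound over the $d$ coordinates together with the requirement that the squared norm be $\leq\e/2$ then yields \eqref{subgau}. The technical crux throughout is this second condition: handling the cross term $\langle\bm S\bm q_i,\bm S\bm h\rangle$ (the whitening and independence bookkeeping in the sub-Gaussian step), and, for leverage scores, checking that \eqref{pang} is exactly the boundedness hypothesis needed to pass from Markov to Bernstein while tracking the explicit constants $35$ and $4C^2$.
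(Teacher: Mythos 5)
Your proposal is correct and follows essentially the same route as the paper's proof: verify the two conditions of Lemma~\ref{lem:Drineas2011} separately on a split failure budget $\delta/(2L)$ each, combine by a union bound, and exploit $\bm h\perp\range(\bm Q)$ to center the cross term $\bm Q^T\bm S^T\bm S\bm h$. The only deviations are cosmetic substitutions of equivalent concentration tools — Hanson--Wright in place of the paper's Bernstein bound for sums of sub-exponential products, Bernstein in place of Hoeffding under \eqref{pang}, and an explicit matrix Chernoff plus second-moment/Markov argument where the paper cites Lemma~A.1 of \cite{malik2022FastAlgorithms} and omits the Markov details — so the architecture and resulting sample-complexity bounds coincide.
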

The scalar $\ell_j$ in \eqref{pang} is the leverage score associated to row $j$ of $\bs{A}$, and $(\ell_j)_{j \in [N]}$ defines a (discrete) probability distribution over the row indices $[N]$ of $\bs{A}$; see \eqref{eq:lev-score-sketch}. 
\begin{remark}
When $\bm Q$ is incoherent, i.e., when its leverage scores satisfy $\ell_i = \mathcal{O}(d/N)$, the entries $q_{ij}$ satisfy $q_{ij} = \mathcal{O}(1/\sqrt{N})$. 
For any $\bm h$ such that $\max_{j\in [N]}|h_j| \lesssim\mathcal{O}(1/\sqrt{N})$, the condition in \eqref{pang} is satisfied with $C = \mathcal{O}(1)$:  
\begin{align*}
\max_{i\in [d]}\max_{j\in [N]: \ell_j>0}\frac{d|q_{ij} h_j|}{\ell_j}\lesssim \frac{d\cdot\frac{1}{\sqrt{N}}\cdot\frac{1}{\sqrt{N}}}{\frac{d}{N}} = 1.
\end{align*} 
\end{remark}

\begin{remark}
    As noted in Section~\ref{sssec:lev_vol_sampling}, leveraged volume sampling requires $m \gtrsim d \log(d / \delta) + d / (\varepsilon \delta)$ samples to satisfy the $(\varepsilon,\delta)$ pair condition.
    This result appears in Corollary 10 of \cite{derezinski2018LeveragedVolume}.
\end{remark}

\section{Numerical experiments} \label{sec:numerical-experiments}

In this section we illustrate various aspects of the BFB approach using both manufactured data as well as data obtained from PDE solutions. The codes used to generate the results of this section are available from the GitHub repository {\tt \href{https://github.com/CU-UQ/BF-Boosted-Quadrature-Sampling}{{https://github.com/CU-UQ/BF-Boosted-Quadrature-Sampling}}}. 

\subsection{Verification of theoretical results on synthetic data} \label{sec:synthetic-experiments}
 
We first verify the theoretical results in Theorems \ref{thm:optimality} and \ref{thm:Gaussian}. 
We do this by simulating different values for $\bm S$, $\bm b$ and $\tilde{\bm b}$. 
We generate a design matrix $\bm A\in\R^{1000\times 50}$ (i.e., $N = 1000$ and $d=50$) with i.i.d.\ standard normal entries and fix it in the rest of the simulations. 
For sketching matrices $\bm S$, we choose the embedding dimension to be $m = 100$ and consider both the Gaussian and leverage score sampling sketches. 
We generate multiple different versions of the vectors $\bs{b}$ and $\tilde{\bs{b}}$ that correspond to different values of $\kappa$ and $\varphi$.
Recall that these parameters control how much of $\bs{b}$ is in the range of $\bs{A}$ and the absolute value of the correlation between $\bs{b}$ and $\tilde{\bs{b}}$, respectively.
The vectors are generated via
\begin{equation}
\begin{aligned}
	\bs{b} &= \kappa \bs{Q} \bs{z}_1 + \sqrt{1 - \kappa^2} \bs{Q}_\perp \bs{z}_2, \\
	\tilde{\bs{b}} &= \varphi \bs{b} + \sqrt{1 - \varphi^2} \bs{b}_\perp \bs{z}_3,
\end{aligned}
\end{equation}
where $\bs{Q} = \orth(\bs{A})$, and $\bs{z}_1 \in \mathbb{R}^{d-1}$, $\bs{z}_2 \in \mathbb{R}^{N-d-1}$ and $\bs{z}_3 \in \mathbb{R}^{N-2}$ are generated by normalizing random vectors of appropriate length whose entries are i.i.d.\ standard normal.
In the experiment, the vectors $\bs{z}_1, \bs{z}_2, \bs{z}_3$ are drawn once and then kept fixed for the different choices of $\kappa$ and $\varphi$.

To check the upper bound in Theorem~\ref{thm:optimality}, we generate $\bs{b}$ and $\tilde{\bs{b}}$ using 9 equi-spaced values for $\varphi$ and $\kappa$ between 0 and 1, which will provide 81 plots for each sketching strategy. 
We use a sequence of $L=10$ independent sketching operators in our BFB approach.
After computing values of $\nu$ for every case, we evaluate the optimality coefficient difference $\mu(\bm b, \bm S_{\ell^*}) - \mu(\bm b, \bm S_{\ell^{**}})$. 
 Figure~\ref{fig:thm33bound} illustrates the relation between $\mu(\bm b, \bm S_{\ell^*}) - \mu(\bm b, \bm S_{\ell^{**}})$ and the bound $2\sqrt{6(1-\nu)\e}$. 
 Due to the unknown constants in \eqref{subgau} and \eqref{lop}, an exact value of $\e$ corresponding to $m=100$ is unavailable.
 Instead, we choose $\e$ to be 0.01 heuristically.
 We chose this particular value of $\e$ since it illustrates how the green curve's shape, which is independent with the scalar $\e$, separates most of the scatter plots from the rest of the area. 
The result shows our purposed BFB bound in 
 Theorem \ref{thm:optimality} is effective and non-vacuous for both Gaussian and leverage score sketchings. 
 It is noticeable that all the dots out of our proposed bound (green) are leverage score sketch spots (blue). 
 The reason is because we set $m=100$ for both sketch strategies, while leverage score sketch requires a higher $m$ to satisfy the  $(\e,\delta)$ pair condition, which leads to a higher deviation in $\mu$ with fixed $m$; see details in Theorem~\ref{thm:sketches}. 

\begin{figure}
    \centering
    \includegraphics[width=0.6\textwidth]{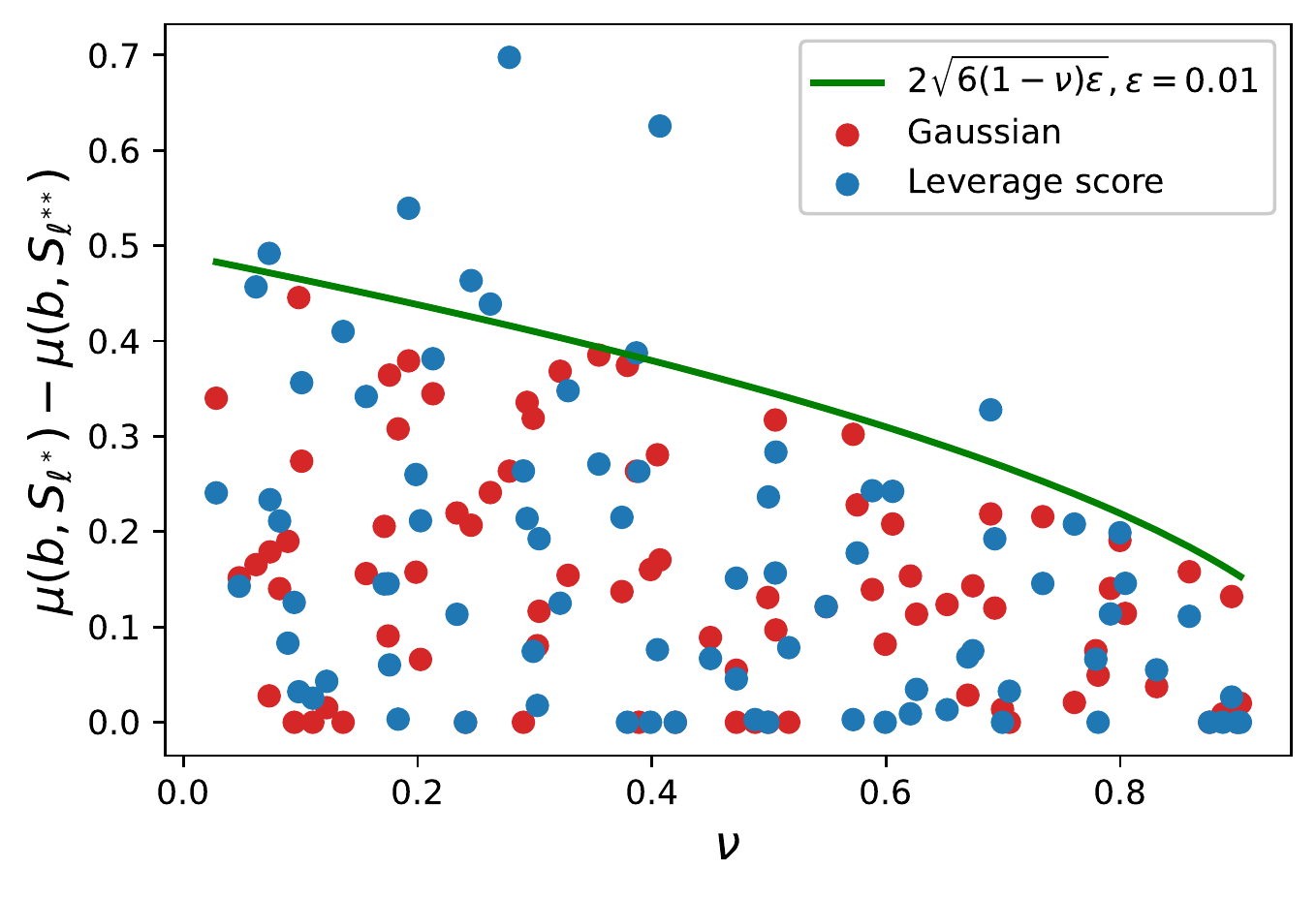}
    \caption{Scatter plots of $\mu(\bm b, \bm S_{\ell^*}) - \mu(\bm b, \bm S_{\ell^{**}})$ based on given values of $\nu$ for Gaussian sketch (red) and leverage score sketch (blue). The green curve is the bound we provide in Theorem~\ref{thm:optimality} with $\e=0.01$.}
    \label{fig:thm33bound}
\end{figure}

\begin{table}[ht]
	\centering
	\caption{Empirical correlation between $\mu^2(\bm A, \bm b)$ and $\mu^2(\bm A, \tilde{\bm b})$ for four different parameters setups and two different sketch types.
	}
	\begin{tabular}{S[table-format=1.2]S[table-format=1.2]cS[table-format=1.2]}
	\toprule
	$\kappa$ & $\varphi$ & Sketch type & {Correlation}  \\
	\midrule
	0.2 	& 0.3 	& Gaussian 			& 0.21 \\
	0.2 	& 0.95 	& Gaussian 			& 0.88 \\
	0.95 	& 0.3 	& Gaussian 			& 0.17 \\
	0.95 	& 0.95 	& Gaussian 			& 0.48 \\
	\midrule
	0.2 	& 0.3 	& Leverage score 	& 0.19 \\
	0.2 	& 0.95 	& Leverage score 	& 0.91 \\
	0.95 	& 0.3 	& Leverage score 	& 0.08 \\
	0.95 	& 0.95 	& Leverage score 	& 0.56 \\
	\bottomrule
	\end{tabular}
	\label{hghg}
\end{table}

\begin{figure}[htb]
	\begin{center}
	\includegraphics[width=0.99\textwidth]{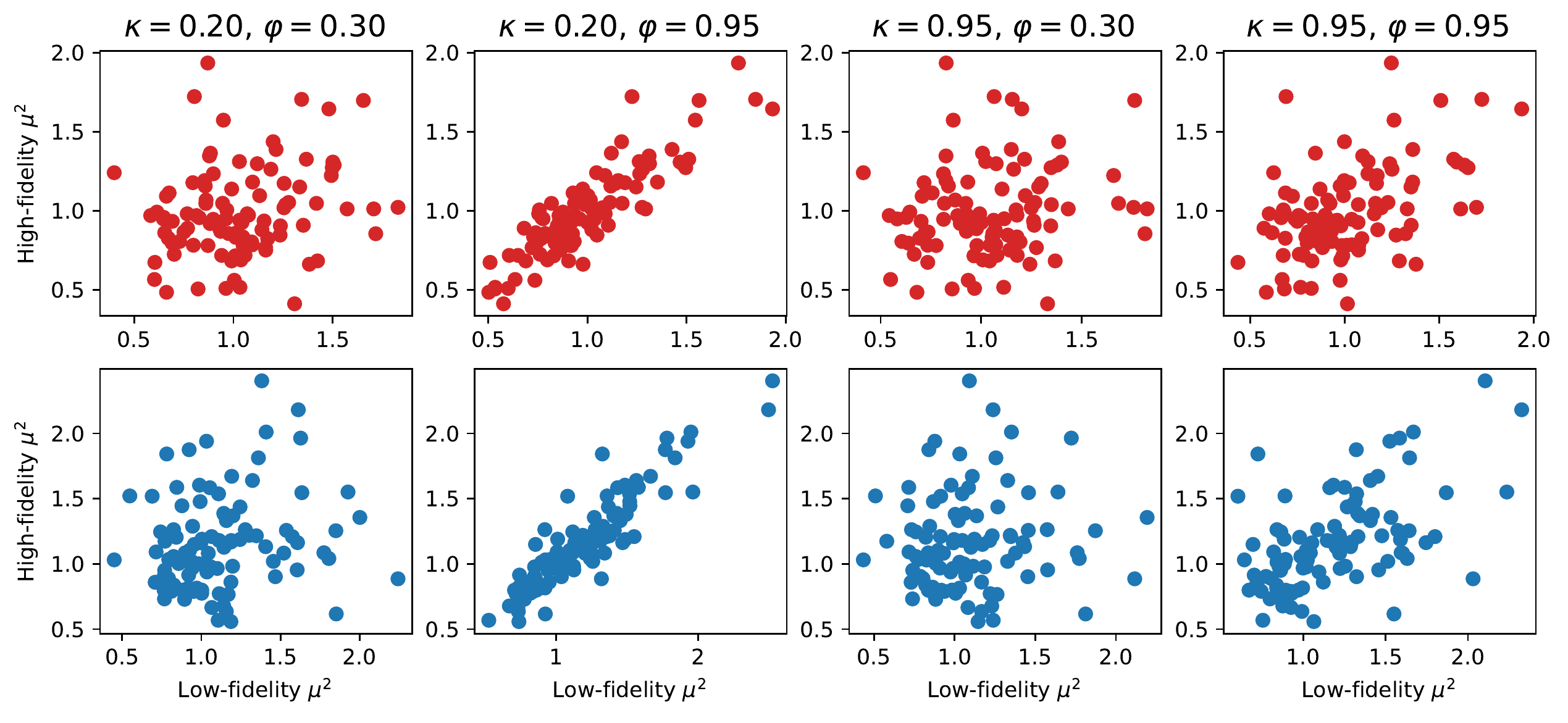}
  \end{center}
  		\caption{Scatter plots of the square of the optimality coefficient for high- and low-fidelity data for each of $100$ different sketches.
			Each point is equal to $(\mu^2(\tilde{\bm b}, \bm S), \mu^2(\bm b, \bm S))$ for one realization of the sketch $\bs{S}$.
			The top and bottom panels correspond to the sketches constructed using Gaussian and leverage score sampling sketches, respectively.\label{fig:beta}}	
\end{figure}

To further validate our theoretical results in Theorem~\ref{thm:Gaussian}, we consider four combinations of $\kappa$ and $\varphi$ as listed in Table~\ref{hghg}.
For both the Gaussian and leverage score sketches we draw 100 sketches randomly.
The same set of sketches are used for each pair of the vectors $\bs{b}$ and $\tilde{\bs{b}}$.
Figure~\ref{fig:beta} shows scatter plots of the squared optimality coefficients for the four different pairs of $\bs{b}$ and $\tilde{\bs{b}}$ and two different sketch types.

Table~\ref{hghg} provides the estimated correlations between $\mu^2(\bm b, \bm S)$ and $\mu^2(\tilde{\bm b}, \bm S)$ for each of the eight setups based on the data points in Figure~\ref{fig:beta}.
For both sketches, a small value of $\kappa$ and a large value of $\varphi$ together yield the highest positive correlation between $\mu^2(\bm b, \bm S)$ and $\mu^2(\tilde{\bm b}, \bm S)$.
In this case, the sketch that attains the smallest residual on the low-fidelity data also attains a near-minimal residual on the high-fidelity data. 
This is indicative of the desired sketch transferability between the low- and high-fidelity regression problems. 
These observations are consistent with the upper bound in \eqref{bound1} and the lower bound in \eqref{mybound1}, supporting the idea of BFB.

\subsection{Experiments on PDE datasets} \label{sec:experiments-pdf-datasets}

In this section we verify the accuracy of Algorithm \ref{alg:BFQS} on two PDE problems:
Thermally-driven cavity fluid flow (Section~\ref{sec:cavity-fluid-flow}) and simulation of a composite beam  (Section~\ref{sec:composite-beam}).
In doing so, we consider three random sketching strategies based on uniform, leverage score (Section~\ref{sssec:lev_score_sampling}), and leveraged volume (Section~\ref{sssec:lev_vol_sampling}) sampling.
As a baseline, we also present results based on deterministic sketching via column-pivoted QR decomposition (Section~\ref{sssec:qr_sampling}).

In both experiments, the high-fidelity solution operator takes uniformly distributed inputs $\bs{p} \in [-1,1]^q$.
We therefore consider approximations of the form in \eqref{eq:function-approximation} with $\psi_j : [-1, 1]^q \mapsto \Rb$ chosen to be products of $q$ univariate (normalized) Legendre polynomials. Specifically, let $\bm j=(j_1,\dots,j_q)$, $j_k\in\mathbb{N}\cup\{0\}$, be a vector of non-negative indices and $\psi_{j_k}(p_k)$ denote the Legendre polynomial of degree $j_k$ in $p_k$ such that $\mathbb{E}[\psi_{j_k}^2(p_k)]=1$. The multivariate Legendre polynomials are given by
\begin{equation} \label{eq:polynomial-product}
	\psi_{\bs j}(\bs{p}) = \prod_{k=1}^q \psi_{j_k}(p_k).
\end{equation}
The set of polynomials $\{\psi_{\bm j}\}$
is chosen so that it spans either a total degree or hyperbolic cross space. In the former case this means all polynomials satisfying $\sum_{k=1}^q j_k\le \zeta$, while in the latter case $\bm j$ is limited to multi-indices with $\prod_{k = 1}^q (j_k + 1) \leq \zeta+1$, for some predefined $\zeta\in\mathbb{N}\cup\{0\}$. 

In order to construct a design matrix $\bs{A}$ as in \eqref{myleastsquares}, and data vectors $\bs{b}$ and $\tilde{\bs{b}}$ in \eqref{myleastsquares} and \eqref{eq:vector-bt-construction}, respectively, we also need to choose pairs of quadrature points and weights $(\bs{p}_n, w_n)_{n\in[N]}$. While both deterministic and random rules are possible, we here choose these quantities to be deterministic and of the form
\begin{equation} \label{eq:product-Gauss-Legendre}
\begin{aligned}
	\bs{p}_n &= (p_{1,n_1}, p_{2,n_2}, \ldots, p_{q,n_q}), \\
	w_n &= \prod_{k=1}^{q} w_{k,{n_k}},
\end{aligned}
\end{equation}
where each sequence $(p_{k,n_k}, w_{k,n_k})_{n_k\in [N_k]}$ consists of node-weight pairs in the $N_k$-point Gauss--Legendre quadrature on $[-1,1]$.
The resulting sequence $(\bs{p}_n, w_n)_{n\in[N]}$ contains $N = \prod_{k=1}^q N_k$ pairs.
When $\bs{A}$ is constructed in this fashion, it is possible to sample rows of that matrix according to the exact leverage score using the efficient method by \citep{malik2022FastAlgorithms}.
Please see Appendix~\ref{sec:lev-score-sampling-alg} for details on how this is done.

To measure the final performance, we use the relative error defined as
\begin{equation} \label{eqn:rel_err}
   E \defeq \frac{\| \bm A \hat{\bs{x}}_\bfqs - \bm b \|_2}{\| \bm b \|_2},
\end{equation}
where $\hat{\bm x}_\bfqs$ is the output from Algorithm~\ref{alg:BFB}.

\subsubsection{Cavity fluid flow} 
\label{sec:cavity-fluid-flow}

Here we consider the case of temperature-driven fluid flow in a 2D cavity \citep{bc15,phd14,hd15,hd15_2,hd18}, with the quantity of interest being the heat flux averaged along the hot wall as Figure \ref{fig:cavity_scheme_color} shows. 
The wall on the left hand side is the hot wall with random temperature $T_h$, and the cold wall at the right hand side has temperature $T_c<T_h$. $\bar{T}_c$ is the constant mean of $T_c$. The horizontal walls are adiabatic. The reference temperature and the temperature difference are given by $T_{\textup{ref}}=(T_h+ \bar{T}_c)/2$ and $\Updelta T_{\textup{ref}}=T_h-\bar{T}_c$, respectively. 
The normalized governing equations are given by
\begin{equation}\label{eqn:cavity}
\begin{aligned}
	&\frac{\partial \u}{\partial t} + \u\cdot\nabla \u=-\nabla p +\frac{\text{Pr}}{\sqrt{\text{Ra}}}\nabla^2 \u+\text{Pr}\Theta\textbf{e}_y,\\
	&\nabla\cdot\u=0,\\
	&\frac{\partial \Theta}{\partial t}+\nabla\cdot(\u\Theta)=\frac{1}{\sqrt{\text{Ra}}}\nabla^2\Theta,
\end{aligned}
\end{equation}
 where $\textbf{e}_y$ is the unit vector $(0,1)$, $\u=(u,v)$ is the velocity vector field, $\Theta=(T-T_{\textup{ref}})/\Updelta T_{\textup{ref}}$ is normalized temperature, $p$ is pressure, and $t$ is time. We assume no-slip boundary conditions on the walls. 
 The dimensionless Prandtl and Rayleigh numbers are defined as $\text{Pr}=\nu_\text{visc}/\alpha$ and $\text{Ra}=g\tau\Updelta T_{\textup{ref}}W^3/(\nu_\text{visc}\alpha)$, respectively, 
where $W$ is the width of the cavity, $g$ is gravitational acceleration, $\nu_\text{visc}$ is kinematic viscosity, $\alpha$ is thermal diffusivity, and $\tau$ is the coefficient of thermal expansion. We set $g=10$, $W=1$, $\tau=0.5$, $\Updelta T_{\textup{ref}}=100$, $\text{Ra}=10^6$, and $\text{Pr}=0.71$. On the cold wall, we apply a temperature distribution with stochastic fluctuations as 
\begin{equation}
    T(x=1,y)=\bar{T}_c+\sigma_T\sum_{i=1}^{q} \sqrt{\lambda_i}\phi_i(y)\mu_i,
\end{equation}
where $\bar{T}_c=100$ is a constant, 
$\{\lambda_i\}_{i\in[q]}$ and $\{\phi_i(y)\}_{i\in[q]}$ are the $q$ largest eigenvalues and corresponding eigenfunctions of the kernel $k(y_1,y_2)=\exp(-|y_1-y_2|/0.15)$, and each $\mu_i \overset{\text{i.i.d.}}{\sim} U[-1, 1]$.
We let $q=2$ (though in general, this does not need to match the physical dimension) and $\sigma_T=2$. 
The vector $\bs{p} = (\mu_1, \, \mu_2)$ is the uncertain input of the model.

\begin{figure}[htb]
    \centering
    \includegraphics[width=0.4\textwidth]{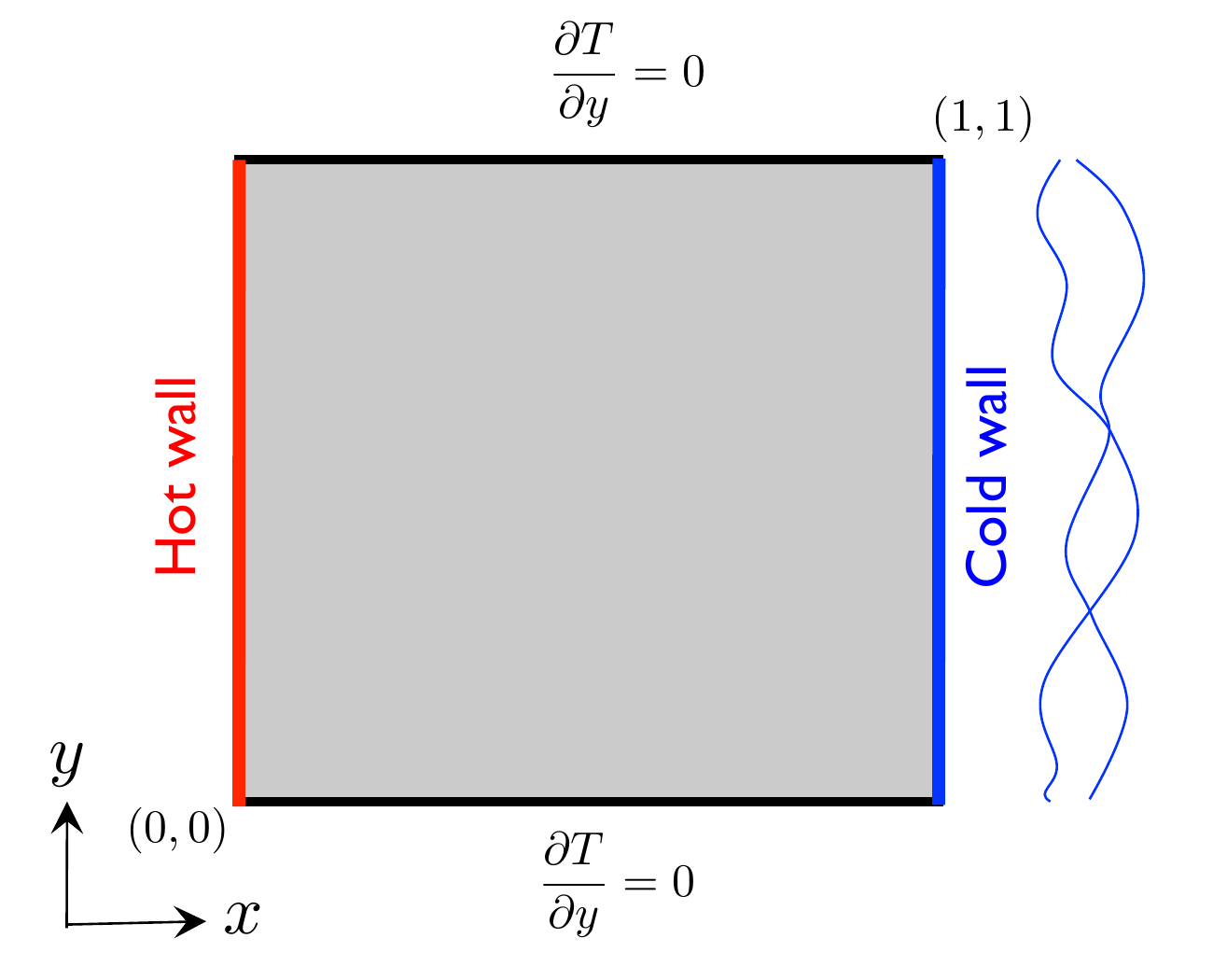}
    \caption{A figure of the temperature driven cavity flow problem, reproduced from Figure~5 of \cite{fdki17}.}
    \label{fig:cavity_scheme_color}
\end{figure}

In order to solve \eqref{eqn:cavity} we use the finite volume method with two different grid resolutions:
a finer grid of size $128 \times 128$ to produce the high-fidelity solution and a coarser grid of size $16 \times 16$ to produce the low-fidelity solution. For our surrogate model, we choose the basis set $\{\psi_j\}_{j\in[d]}$ based on the total degree and hyperbolic cross spaces of maximum order $\zeta=4$. The corresponding spaces have $d=15$ and $d=10$ basis functions, respectively.
The quadrature pairs $(\bs{p}_n, w_n)$ used to construct $\bs{A}$, $\bs{b}$, and $\tilde{\bs{b}}$ are defined as in \eqref{eq:product-Gauss-Legendre} and are based on the nodes and weights from a 10-point Gauss--Legendre rule, i.e., $N_1 = N_2 = 10$.

We first repeat the test we ran on synthetic data in Section~\ref{sec:synthetic-experiments}.
Figure~\ref{fig:corr_plots} shows the scatter plots of $(\mu^2(\tilde{\bm b}, \bm S), \mu^2(\bm b, \bm S))$ for the two different polynomial spaces and three different random sampling approaches.
Each plot is based on 100 sketches with $m=30$ and $m=20$ samples used for the total degree and hyperbolic cross spaces, respectively. Table~\ref{table:cavity-flow} presents the correlation coefficients between $\mu^2(\bm b,\bm S)$ and $\mu^2(\tilde{\bm b},\bm S)$ based on the points in Figure~\ref{fig:corr_plots}.
There is a discrepancy between the correlation observed for the total degree and hyperbolic cross spaces.
One possible explanation for this is that a greater portion of $\bs{b}$ is in the range of $\bs{A}$ for the total degree space than for the hyperbolic cross space, i.e., $\kappa$ (see \eqref{eq:phi-kappa}) is larger for the former space.
Theorem~\ref{thm:Gaussian} indicates that a larger $\kappa$ should be associated with lower correlation.

\begin{figure}[H]
    \centering
    \includegraphics[width=1.0\textwidth]{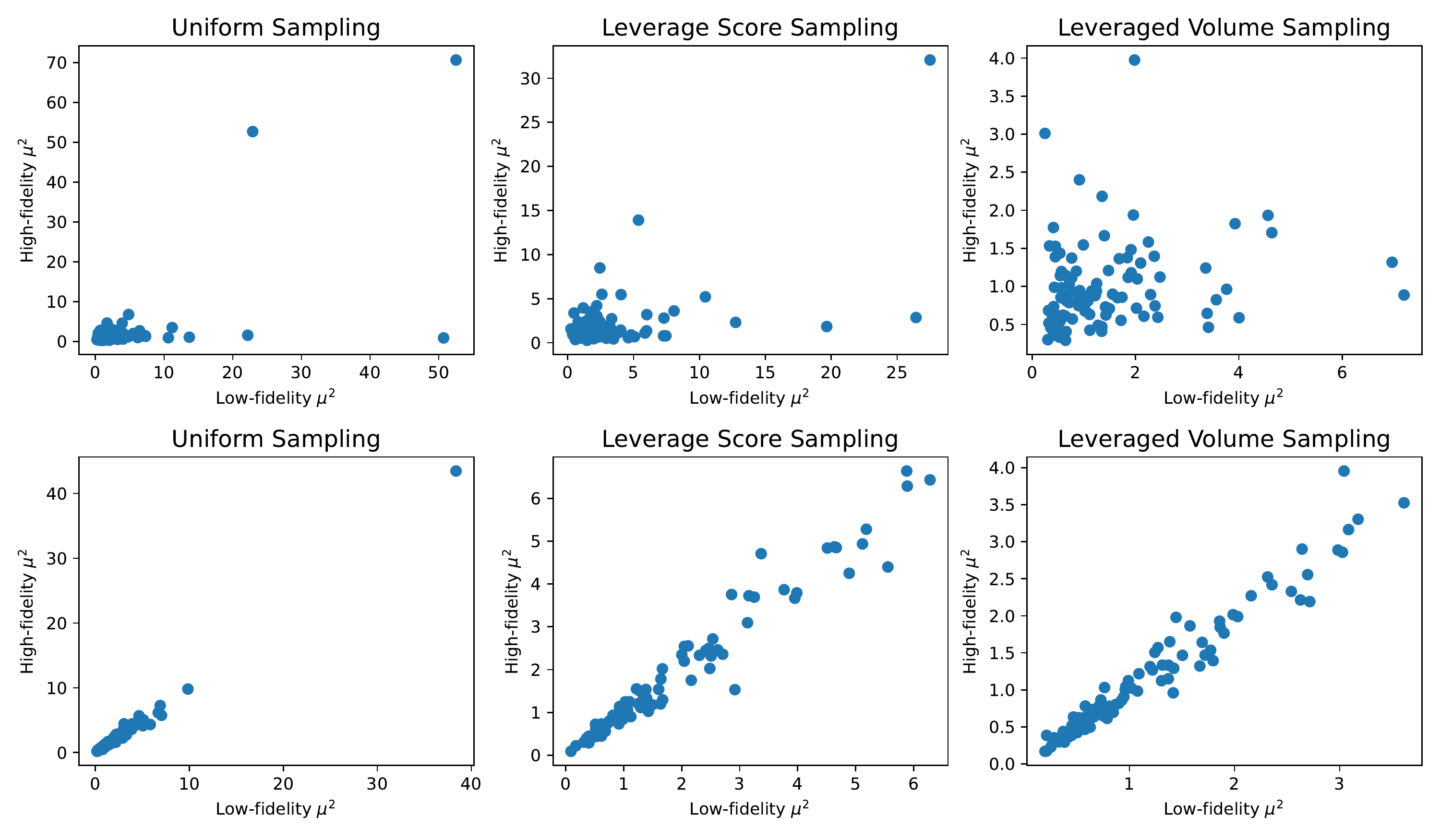}
    \caption{Scatter plots of the square of the optimality coefficient for high- and low-fidelity data from the cavity fluid flow problem for different polynomial spaces (top: total degree; bottom: hyperbolic cross) and types of sampling.
    	Each point is equal to $(\mu^2(\tilde{\bm b}, \bm S), \mu^2(\bm b, \bm S))$ for one realization of the sketch $\bs{S}$, and each subplot contains 100 points (i.e., is based on 100 sketch realizations).
    	For the total degree space $m=30$ samples are used and for the hyperbolic cross space $m=20$ samples are used.
    	The corresponding correlation coefficients are presented in Table \ref{table:cavity-flow}.}
    \label{fig:corr_plots}
\end{figure}

Next, we run Algorithm~\ref{alg:BFB} with $L=10$ sketches and the number of samples $m=1.2d$ and $m=2d$.
Figure~\ref{fig:cavity_flow} shows the relative error $E$ in (\ref{eqn:rel_err}) from running the algorithm 1000 times for each of the different choices of polynomial space, sketch size $m$, and random sampling approach. We observe that in all cases the BFB approach improves the error as compared to the non-boosted case.  In particular, the improvement is more considerable in the case of the hyperbolic cross basis, which is explained by the higher correlation between $\mu^2(\bm A, \bm b)$ and $\mu^2(\bm A, \tilde{\bm b})$, as reported in Table \ref{table:cavity-flow}. Additionally, for the case of hyperbolic space, the BFB results is comparable or better performance as compared to the column-pivoted QR decomposition (blue line in Figure \ref{fig:cavity_flow}). Note that the computational cost of column-pivoted QR is higher than the BFB as it requires the QR decomposition of the entire matrix $\bm A$. 

\begin{table}[ht]
	\centering
	\caption{
		Correlation coefficients between $\mu^2(\bm A, \bm b)$ and $\mu^2(\bm A, \tilde{\bm b})$ for different sampling methods under total degree or hyperbolic cross space.
		The correlation is computed based on the points shown in Figure~\ref{fig:corr_plots}.
	}
	\begin{tabular}{lccc}
	\toprule
	  Polynomial Space      & Uniform Sampling & Leverage Score Sampling & Leveraged Volume Sampling  \\
	\midrule
	Total Degree 	    & 0.66 	& 0.57 		& 0.18 \\
	Hyperbolic Cross 	& 0.99 	& 0.98 		& 0.98 \\
	\bottomrule
	\end{tabular}
	\label{table:cavity-flow}
\end{table}

\begin{figure}[H]
    \centering
    \includegraphics[width=1.0\textwidth]{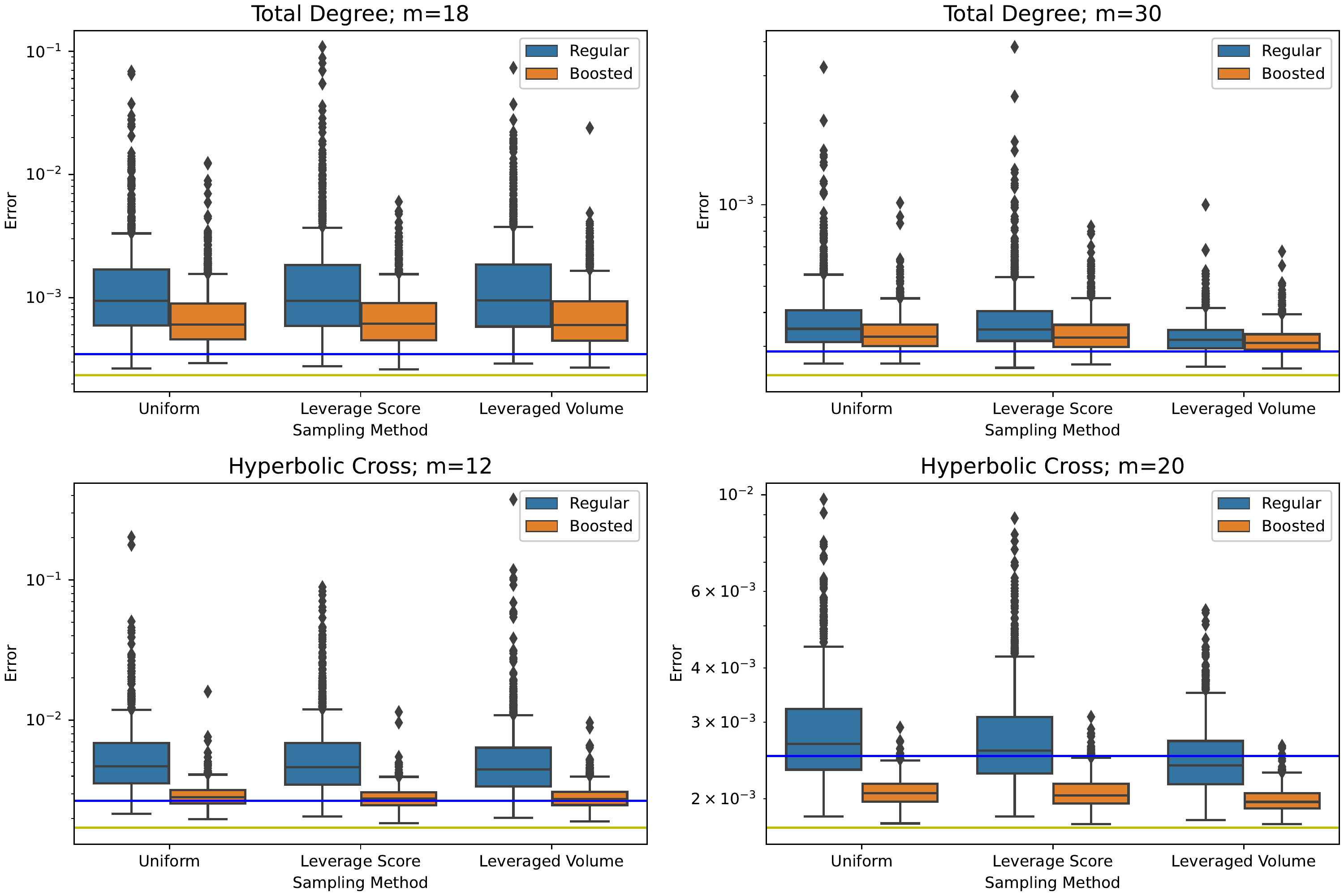}
    \caption{Relative error for different sampling methods and polynomial spaces when fitting the surrogate model to the cavity fluid flow data.
    	Yellow lines show the relative error $E$ in (\ref{eqn:rel_err}) for the unsketched solution in \eqref{myleastsquares}. 
    	Blue lines show $E$ when the coefficients $\bm x$ are computed via the QR decomposition-based method in Section~\ref{sssec:qr_sampling}.   
		The blue box plots shows the distribution of $E$ based on 1000 trials when $\bm x$ is computed as in \eqref{eq:xast-def}.
		The orange box plots shows the same things, but for the solution $\hat{\bs{x}}_{\bfqs}$ computed via Algorithm~\ref{alg:BFB}.
    }
    \label{fig:cavity_flow}
\end{figure}

\subsubsection{Composite beam} \label{sec:composite-beam}

Following \cite{hfnd18,de2020transfer,de2022neural}, we consider a plane-stress, cantilever beam with composite cross section and hollow web  as shown in Figure \ref{fig:cantilever}. 
The quantity of interest in this case is the maximum displacement of the top cord. 
The uncertain parameters of the model are $E_1, E_2, E_3, f$, where $E_1$, $E_2$ and $E_3$ are the Young's moduli of the three components of the cross section and $f$ is the intensity of the applied distributed force on the beam; see Figure~\ref{fig:cantilever}. These are assumed to be statistically independent and uniformly distributed. The dimension of the input parameter is therefore $q = 4$. Table \ref{table:beam} shows the range of the input parameters as well as the other deterministic parameters.

\begin{figure}[H]
	\centering
	\includegraphics[trim = 62mm 80mm 65mm 75mm, clip, width = 0.8\textwidth]{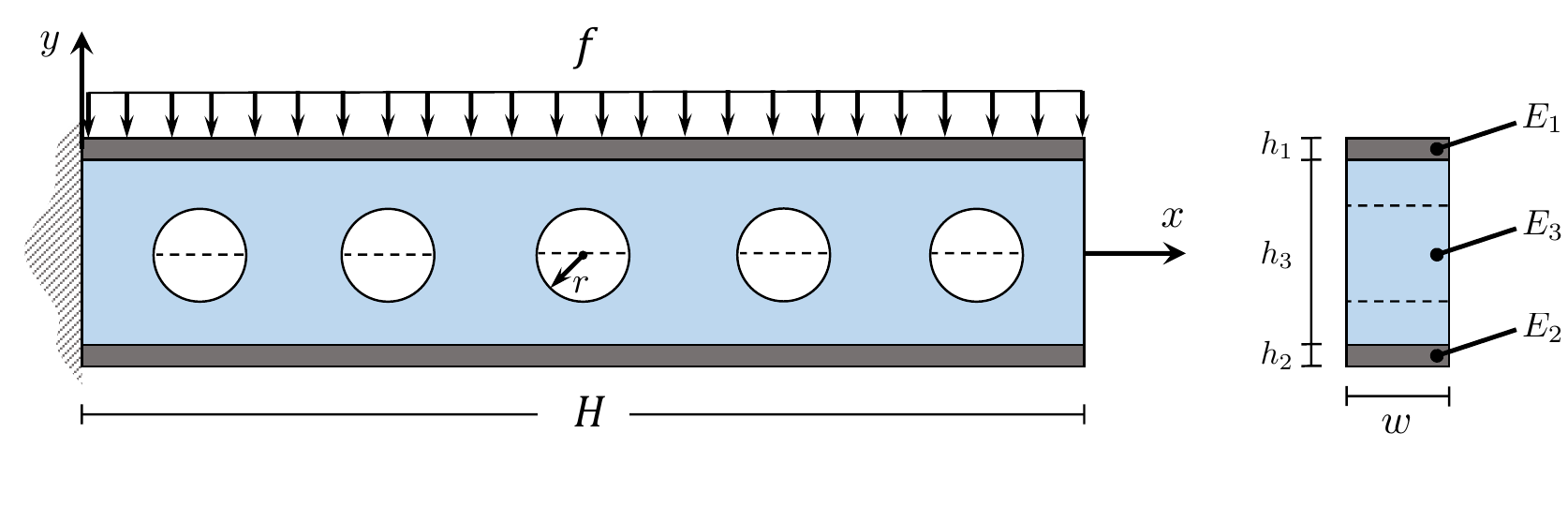}
	\caption{Cantilever beam (left) and the composite cross section (right) adapted from \cite{hampton2018PracticalError}. }
	\label{fig:cantilever}
\end{figure}

\begin{table}[htb]
	\centering
	\caption{The values of the parameters in the composite cantilever beam model. 
		The center of the holes are at $x=\{5,15,25,35,45\}$.
		The parameters $f$, $E_1$, $E_2$ and $E_3$ are drawn independently and uniformly at random from the specified intervals.
		\label{table:beam}} 
	\centering
	\begin{tabular}{ c c c c c c c c c c}   
		\toprule 
			$H$ & $h_1$ & $h_2$ & $h_3$ & $w$ & $r$ & $f$ & $E_1$ & $E_2$ & $E_3$ \\
		\midrule
			50  &  0.1 & 0.1 & 5 &  1 &  1.5 & $[9,11]$ & $[0.9\text{e6}, 1.1\text{e6}]$  &  $[0.9\text{e6}, 1.1\text{e6}]$ & $[0.9\text{e4}, 1.1\text{e4}]$  \\ 
		\bottomrule
	\end{tabular} 
	\label{table:parameters} 
\end{table}

For the cavity fluid flow problem in Section~\ref{sec:cavity-fluid-flow}, we created high- and low-fidelity solutions by changing the resolution of the grid used in the numerical solver.
For the present problem, we instead use two different models.
The high-fidelity model is based on a finite element discretization of the beam using a triangle mesh, as Figure~\ref{fig:mesh} shows. 
The low-fidelity model is derived from Euler--Bernoulli beam theory in which the vertical cross sections are assumed to remain planes throughout the deformation. 
The low-fidelity model ignores the shear deformation of the web and does not take the circular holes into account. 
Considering the Euler-Bernoulli theorem, the vertical displacement $u$ is 
\begin{equation}
	\label{eq:Euler-Bernoulli}
	EI\frac{d^4u(x)}{dx^4}=-f,
\end{equation}
where $E$ and $I$ are, respectively, the Young's modulus and the moment of inertia of an equivalent cross section consisting of a single material. We let $E=E_3$, and the width of the top and bottom sections are $w_1=(E_1/E_3)w$ and $w_2=(E_2/E_3)w$, while all other dimensions are the same, as Figure \ref{fig:cantilever} shows. The solution of \eqref{eq:Euler-Bernoulli} is 
\begin{equation}
	\label{eq:Euler_Bernoulli_soln}
	u(x)=-\frac{qH^4}{24EI}\left(\left(\frac{x}{H}\right)^4-4\left(\frac{x}{H}\right)^3+6\left(\frac{x}{H}\right)^2\right).
\end{equation}
\begin{figure}[H]
	\centering
	\includegraphics[trim = 0mm 80mm 0mm 70mm, clip, width = 0.8\textwidth]{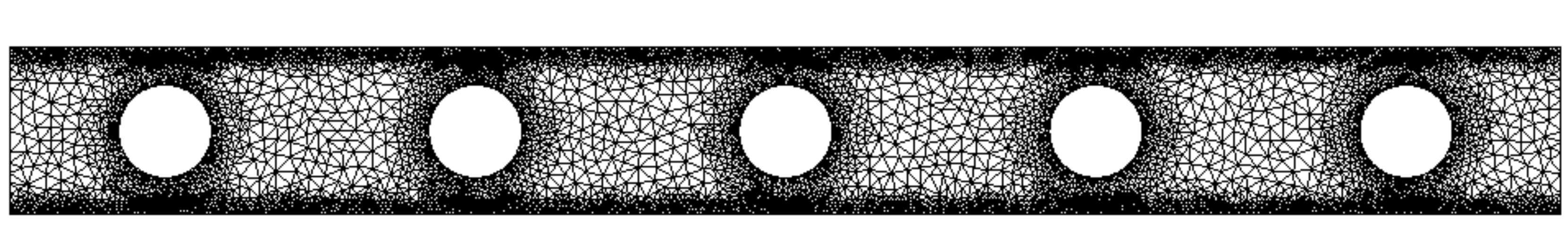}
	\caption{Finite element mesh used to generate high-fidelity solutions.  }
	\label{fig:mesh}
\end{figure}

The surrogate model is based on multivariate Legendre polynomials of maximum degree $\zeta=2$ with total degree and hyperbolic cross truncation. The corresponding spaces have $d=15$ and $d=9$ basis functions, respectively.
As in the case of the cavity flow problem, the quadrature pairs $(\bs{p}_n, w_n)$ used to construct $\bs{A}$, $\bs{b}$ and $\tilde{\bs{b}}$ are based on the nodes and weights from 10-point Gauss--Legendre rule appropriately mapped into the ranges given in Table~\ref{table:parameters}.

Figure~\ref{fig:corr_plots_beam} shows the scatter plots of $(\mu^2(\tilde{\bm b}, \bm S), \mu^2(\bm b, \bm S))$ when repeating the experiment in Section~\ref{sec:synthetic-experiments} for the two different polynomial spaces and three different random sampling approaches.
Each plot is based on 100 sketches with $m=2d$, i.e., $m=30$ and $m=18$ samples used for the total degree and hyperbolic cross spaces, respectively. Table \ref{table:beam-corr} reports the correlation coefficient between $\mu^2(\bm b,\bm S)$ and $\mu^2(\tilde{\bm b},\bm S)$, indicating an overall high correlation in all cases.

\begin{figure}[H]
	\centering
	\includegraphics[width=1.0\textwidth]{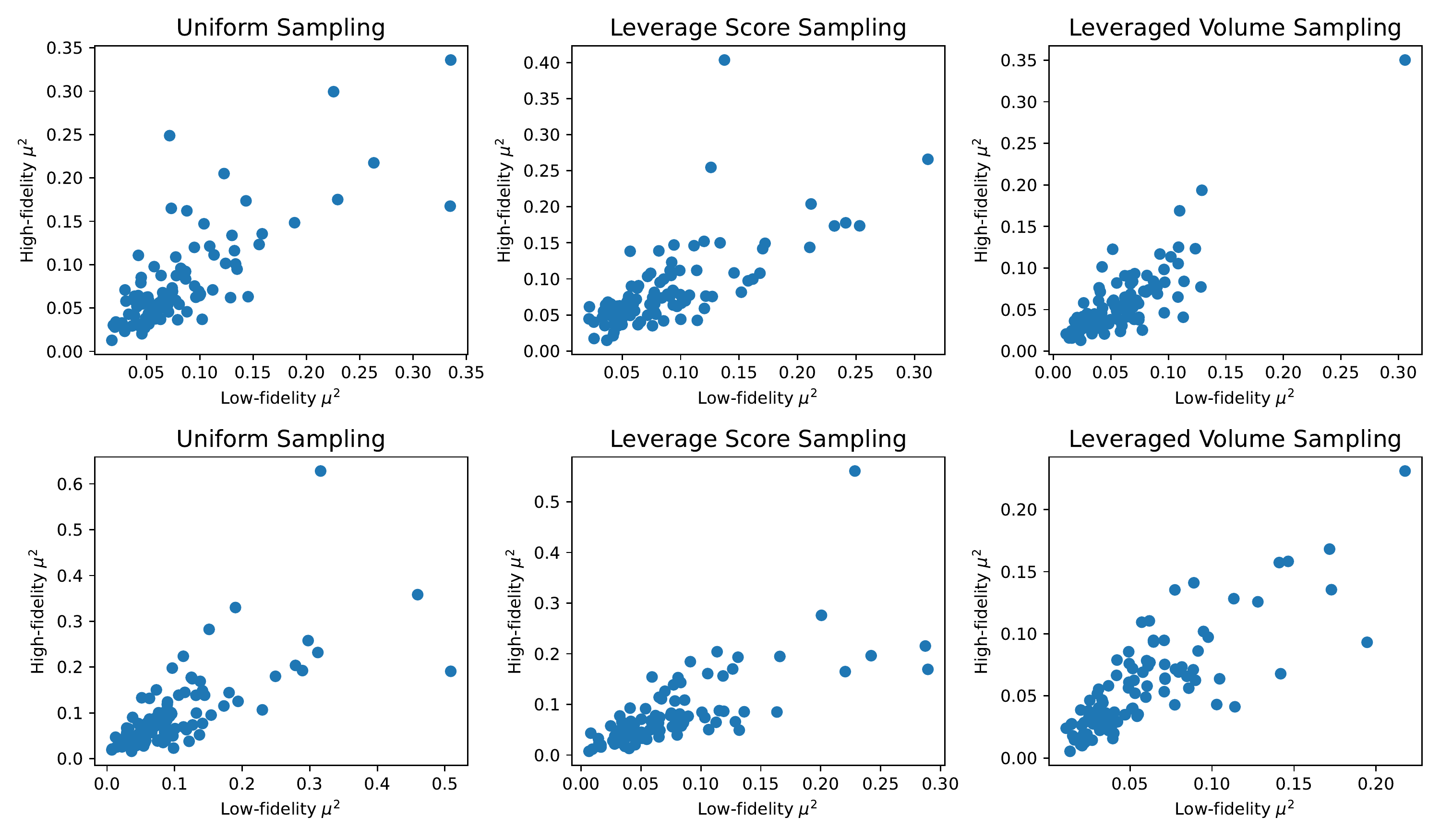}
	\caption{
		Scatter plots of the square of the optimality coefficient for high- and low-fidelity data from the composite beam problem for different polynomial spaces (top: total degree; bottom: hyperbolic cross) and types of sampling.
		Each point is equal to $(\mu^2(\tilde{\bm b}, \bm S), \mu^2(\bm b, \bm S))$ for one realization of the sketch $\bs{S}$, and each subplot contains 100 points (i.e., is based on 100 sketch realizations).
		For the total degree space $m=30$ samples are used and for the hyperbolic cross space $m=18$ samples are used.
		The corresponding correlation coefficients are presented in Table \ref{table:beam-corr}.
	}
	\label{fig:corr_plots_beam}
\end{figure}

\begin{table}[ht]
	\centering
	\caption{
		Correlation coefficient between $\mu^2(\bm A, \bm b)$ and $\mu^2(\bm A, \tilde{\bm b})$ for different sampling methods under total degree or hyperbolic cross space.
		The correlation is computed based on the points shown in Figure~\ref{fig:corr_plots_beam}.
	}
	\begin{tabular}{lccc}
	\toprule
	  Polynomial Space  & Uniform Sampling & Leverage Score Sampling & Leveraged Volume Sampling  \\
	\midrule
	Total Degree 	    & 0.77 	& 0.69 		& 0.84 \\
	Hyperbolic Cross 	& 0.72 	& 0.73 		& 0.82 \\
	\bottomrule
	\end{tabular}
	\label{table:beam-corr}
\end{table}

Next, we run Algorithm~\ref{alg:BFB} with $L=10$ sketches and $m$ chosen to be $m=1.2d$ and $m=2d$.
Figure~\ref{fig:beam} shows the results from running the algorithm 1000 times for each of the different choices of polynomial space, number of samples $m$, and random sampling approach. We observe that the BFB performance is superior to that of the non-boosted implementation as it leads to smaller variance of the error and fewer outliers with smaller deviation from the mean performance. In this example, the BFB leads to comparable accuracy as the column-pivoted QR sketch, but with smaller sketching cost. As in the case of the cavity flow, the results corroborate the discussion below Theorem~\ref{thm:Gaussian}, in that the BFB improves the regression accuracy when $\cor(\mu^2(\bm b,\bm S),\mu(\tilde{\bm b},\bm S))$ is large. 

\begin{figure}[H]
	\centering
	\includegraphics[width=1.0\textwidth]{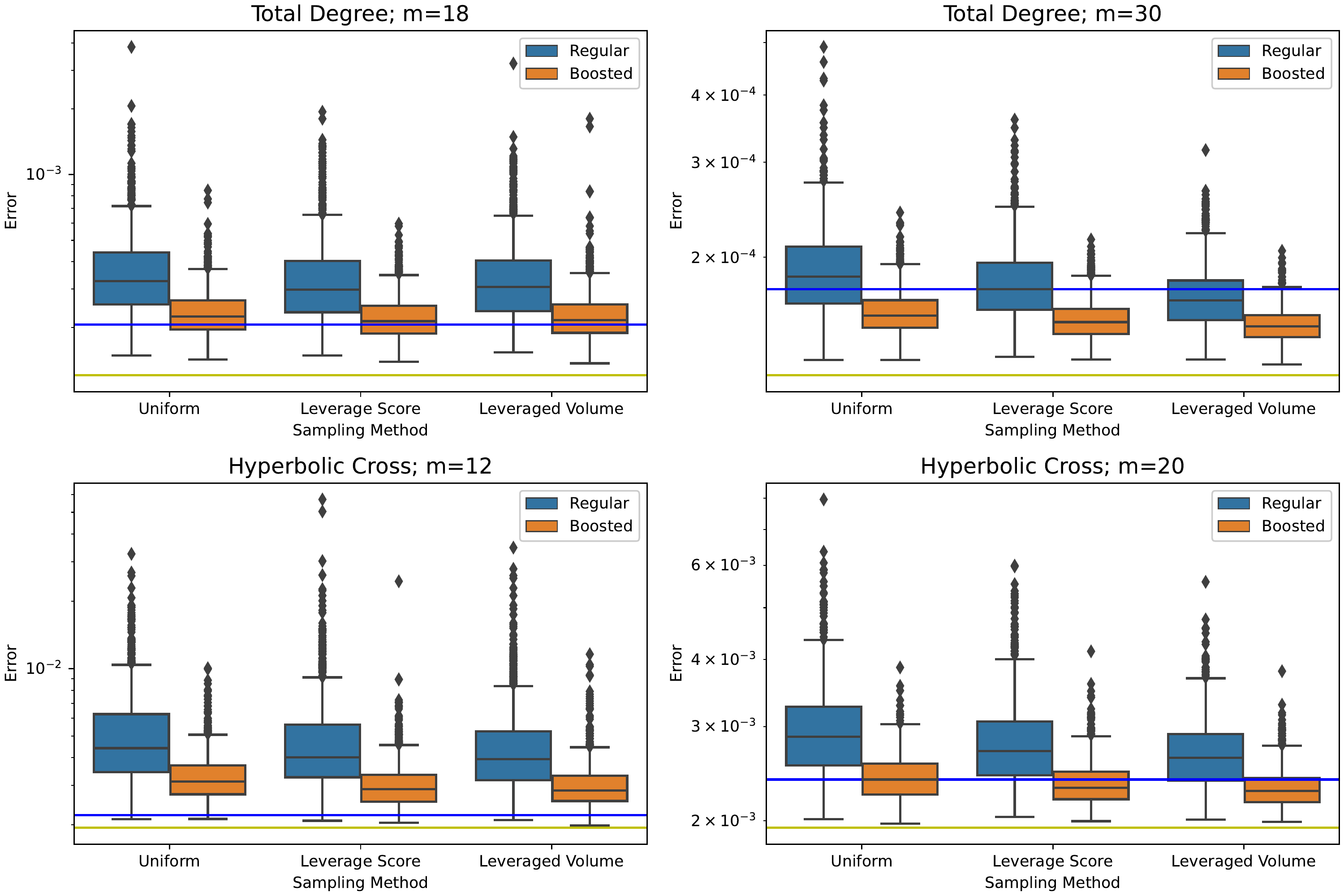}
	\caption{Relative error for different sampling methods and polynomial spaces when fitting the surrogate model to the beam problem data.
    	Yellow lines show the relative error $E$ in (\ref{eqn:rel_err}) for the unsketched solution in \eqref{myleastsquares}. 
    	Blue lines show $E$ when the coefficients $\bm x$ are computed via the QR decomposition-based method in Section~\ref{sssec:qr_sampling}.   
		The blue box plots shows the distribution of $E$ based on 1000 trials when $\bm x$ is computed as in \eqref{eq:xast-def}.
		The orange box plots shows the same things, but for the solution $\hat{\bs{x}}_{\bfqs}$ computed via Algorithm~\ref{alg:BFB}.}
	\label{fig:beam}
\end{figure}

\section{Conclusion}
\label{sec:conclusion}

This work was concerned with the construction of (polynomial) emulators of parameter-to-solution maps of PDE problems via sketched least-squares regression. Sketching is a design of experiments approach that aims to improve the cost of building a least squares solution in terms of reducing the number of samples needed --- when the cost of generating data is high --- or the cost of generating a least squares solution –-- when data size is substantial. Focusing on the former case, we have proposed a new boosting algorithm to compute a sketched least squares solution. 

The procedure consisted in identifying the best sketch from a set of candidates used to construct least squares regression of the low-fidelity data and applying this {\it optimal} sketch to the regression of high-fidelity data. The bi-fidelity boosting (BFB) approach limits the required sample complexity to $\sim d \log d$ high-fidelity data, where $d$ is the size of the (polynomial) basis. We have provided theoretical analysis of the BFB approach identifying assumptions on the low- and high-fidelity data under which the BFB leads to improvement of the solution relative to non-boosted regression of the high-fidelity data. We have also provided quantitative bounds on the residual of the BFB solution relative to the full, computationally expensive solution. We have investigated the performance of BFB on manufactured and PDE data from fluid and solid mechanics.  These cover sketching strategies based on leverage score and leveraged volume sampling, for truncated Legendre polynomials of both total degree and hyperbolic cross type. All tests illustrated the efficacy of BFB in reducing the residual --- as compared to the non-boosted implementation –-- and validate the theoretical results.

The present study was focused on the case of (weighted) least squares polynomial regression. When the regression coefficients are sparse, methods based on compressive sampling have proven efficient in reducing the sample complexity below the size of the polynomial basis; see, e.g., \cite{doostan2011non,adcock2022sparse}. As interesting future research direction is to extend the BFB strategy to such under-determined cases, for instance, using the approach of \cite{diaz2018sparse}.

\section*{Acknowledgments} 

This work was supported by the AFOSR awards FA9550-20-1-0138 and FA9550-20-1-0188 with Dr. Fariba Fahroo as the program manager. The views expressed in the article do not necessarily represent the views of the AFOSR or the U.S. Government.


\newpage
\appendix

\section{Efficient leverage score sampling of certain design matrices} \label{sec:lev-score-sampling-alg}

In this section, we describe the key elements of the sampling approach developed in \citep{malik2022FastAlgorithms} as it applies to the problems we consider in this paper.
The discussion here will consider the design matrices discussed in Section~\ref{sec:experiments-pdf-datasets}.
Using the same notation as in that section, define the matrices $\bs{A}_k$ for $k \in [q]$ elementwise via
\begin{equation}
    \bm A_k(n_k, j_k) = \sqrt{w_{k, n_k}} \psi_{j_k}(p_{k, n_k}), \;\;\;\; n_k \in [N_k], \; j_k \in [\zeta].
\end{equation}
Next, define
\begin{equation}
    \bs{A}_\text{TP} \defeq \bs{A}_1 \otimes \cdots \otimes \bs{A}_q,
\end{equation}
where $\otimes$ denotes the Kronecker product; see Section~12.3 of \citep{golub2013MatrixComputations} for a definition.
The design matrices corresponding to total degree and hyperbolic cross polynomial spaces discussed in Section~\ref{sec:experiments-pdf-datasets} are made up of a subset of the columns of $\bs{A}_\text{TP}$.
In particular, using Matlab indexing notation, they can be written as
\begin{equation}
    \bs{A} = \bs{A}_\text{TP}(:, \bs{v}),
\end{equation}
where $\bs{v}$ is a vector containing distinct column indices of $\bs{A}_\text{TP}$.
The sampling scheme we discuss requires the additional assumption that the entries in $\bs{v}$ are arranged in increasing order.
The columns of $\bs{A}$ can always be permuted to ensure that this is possible when $\bs{A}$ corresponds to a total degree or hyperbolic cross space.
Such a permutation will not change the least squares problem since  
it will only permute the order of the entries in the solution vector, and is therefore something that can always be done.

Note that a column index $c$ of $\bs{A}_\text{TP}$ corresponds to a multi-index $(c_1, \ldots, c_q)$ such that
\begin{equation}
    \bs{A}_{\text{TP}}(:, c) = \bs{A}_1(:, c_1) \otimes \cdots \otimes \bs{A}_{q}(:, c_q).
\end{equation}
Each row index $r$ of $\bs{A}_\text{TP}$ corresponds to a multi-index $(r_1, \ldots, r_q)$ in a similar fashion.

Algorithm~\ref{alg:lev-score-samp} outlines the sampling algorithm.
We provide some intuition for why the algorithm works and refer the reader to \citep{malik2022FastAlgorithms} for a rigorous treatment.
Note that $\bs{A}$ is full rank and therefore $\rank(\bs{A}) = d$. 
Let $\bs{Q} \bs{R} = \bs{A}$ be a compact QR decomposition (i.e., such that $\bs{Q}$ has $d$ columns and $\bs{R}$ has $d$ rows).
Recall that the leverage score sampling distribution satisfies
\begin{equation} \label{eq:lev-score-dist}
    \bs{p}(i) = \frac{\| \bs{Q}(i, :) \|_2^2}{d}.
\end{equation}
Instead of drawing a sample according to the distribution above, we may instead draw a single column $\bs{Q}(:,j)$ of $\bs{Q}$ uniformly at random and instead draw a sample according to the probability distribution defined by $\tilde{\bs{p}}_j(i) = (\bs{Q}(i,j))^2$.
To see this, let $\tilde{I}$ be a random row index drawn according to this alternate strategy.
Moreover, let $J \sim \Uniform([d])$ be the random column index, and let $\tilde{I}_j$ be a random row index drawn according to $\tilde{\bs{p}}_j$.
Then we have
\begin{equation}
    \Pb(\tilde{I} = i) 
    = \sum_{j = 1}^d \Pb(\tilde{I} = i \mid J = j) \, \Pb(J=j) 
    = \sum_{j = 1}^d \Pb(\tilde{I}_j = i) \, \Pb(J=j) 
    = \sum_{j = 1}^d (\bs{Q}(i,j))^2 \, \frac{1}{d} 
    = \frac{\|\bs{Q}(i, :)\|_2^2}{d}
    = \bs{p}(i).
\end{equation}
This shows that the alternate sampling strategy indeed draws samples according to the leverage score sampling distribution.
This is the sampling strategy that our algorithm uses.
Moreover, it uses two additional fact:
\begin{enumerate}[(i)]
    \item When $\bs{A}$ has the particular structure assumed in this section, then the $c$th column of $\bs{Q}$ satisfies
    \begin{equation} \label{eq:kronecker-trick}
        \bs{Q}(:,c) = \bs{Q}_1(:, c_1) \otimes \cdots \otimes \bs{Q}_q(:, c_q),
    \end{equation}
    where $\bs{Q}_1, \ldots, \bs{Q}_{q}$ are defined in line~\ref{line:small-qr} in Algorithm~\ref{alg:lev-score-samp}.

    \item Due to \eqref{eq:kronecker-trick}, drawing a row index $r$ according to $\tilde{\bs{p}}_j$ is equivalent to drawing a multi-index $(r_1,\ldots,r_q)$ according to a product distribution with each $r_k$ drawn independently according to the distribution $((\bs{Q}_k(r_k, j_k))^2)_{r_k}$ where $(j_1, \ldots, j_q)$ is the column multi-index corresponding to $j$.
\end{enumerate}
Fact (i) makes it possible to sample according to the alternate sampling strategy without every needing to compute the QR decomposition of the large matrix $\bs{A}$.
A more general version of this fact appears in Proposition~4.4 of \citep{malik2022FastAlgorithms}.
Fact (ii) further makes it possible to sample according to $\tilde{\bs{p}}_j$ without needing to form that probability vector which is of length $\prod_k N_k$.

\begin{algorithm}[ht]
		\DontPrintSemicolon
		\caption{Efficient leverage score sampling of total degree and hyperbolic cross design matrices\label{alg:lev-score-samp}}
		\KwIn{Matrices $\bs{A}_1, \ldots, \bs{A}_q$, index vector $\bs{v}$, number of samples $m$}
		\KwOut{Vector $\bs{s} \in [\prod_k N_k]^m$ of $m$ samples drawn from row indices of $\bs{A}$}
		\begin{algorithmic}[1]
			\FOR{$k \in [q]$}{
			    \STATE Compute compact QR decomposition $\bs{Q}_k \bs{R}_k = \bs{A}_k$ \label{line:small-qr}
			}
			\ENDFOR
			\FOR{$i \in [m]$}{
			    \STATE Draw an entry $j$ from $\bs{v}$ uniformly at random
			    \STATE Compute the multi-index $(j_1, \ldots, j_q)$ corresponding to $j$
			    \FOR{$k \in [q]$}{
			        \STATE Construct the probability distribution $\bs{p} = (( \bs{Q}_{k}(r_k, j_k) )^2)_{r_k} \in \Rb^{N_k}$
			        \STATE Draw an index $r_k \in [N_k]$ according to the distribution $\bs{p}$
			    }
			    \ENDFOR
			    \STATE Set the $i$th sample $\bs{s}(i)$ equal the row index corresponding to the row multi-index $(r_1,\ldots,r_q)$
			}
			\ENDFOR
		\RETURN Vector of samples $\bs{s}$
		\end{algorithmic}
\end{algorithm}

\section{Proof of Theorem \ref{thm:Gaussian}}\label{app:a}

The proof of Theorem \ref{thm:Gaussian} relies on the following lemmas:

\begin{lemma}\label{lem:cor}
Let $X$ and $Y$ be two (nonconstant) random variables defined on the same probability space. 
The correlation coefficient between $X$ and $Y$, $\cor(X,Y)$, is bounded from below as
\begin{equation}
	\cor(X, Y) \geq \sqrt{\frac{\V[X]}{\V[Y]}} - \sqrt{\frac{\V[Y-X]}{\V[Y]}}.\label{corcorb}
\end{equation}
\end{lemma}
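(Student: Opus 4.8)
The plan is to reduce the claimed bound to the triangle inequality for the standard-deviation seminorm. First I would eliminate the covariance using the polarization-type identity
\begin{equation*}
  \V[Y-X] = \V[X] + \V[Y] - 2\,\E\big[(X-\E[X])(Y-\E[Y])\big],
\end{equation*}
which rearranges to
\begin{equation*}
  \E\big[(X-\E[X])(Y-\E[Y])\big] = \tfrac{1}{2}\big(\V[X] + \V[Y] - \V[Y-X]\big).
\end{equation*}
Introducing the shorthand $a = \sqrt{\V[X]}$, $b = \sqrt{\V[Y]}$, $c = \sqrt{\V[Y-X]}$ --- all well-defined, with $a,b>0$ since $X$ and $Y$ are nonconstant --- the definition of $\cor$ then gives the closed form $\cor(X,Y) = (a^2 + b^2 - c^2)/(2ab)$.

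Next I would note that the right-hand side of \eqref{corcorb} is exactly $a/b - c/b = (a-c)/b$, so the assertion becomes $(a^2+b^2-c^2)/(2ab) \geq (a-c)/b$. Because $2ab>0$, I may clear denominators; the inequality is then equivalent, after cancelling $2a^2$ and regrouping, to
\begin{equation*}
  b^2 \geq a^2 - 2ac + c^2 = (a-c)^2,
\end{equation*}
i.e. to $b \geq |a-c|$.

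Finally I would establish $b \geq |a-c|$ via the triangle inequality. Writing $\widetilde{Z} \defeq Z - \E[Z]$ for the centering of a square-integrable random variable $Z$, the map $Z \mapsto \widetilde{Z}$ is linear and $\sqrt{\V[Z]} = \|\widetilde{Z}\|_{L^2}$, so standard deviation is a seminorm. Applying this to the decompositions $Y = X + (Y-X)$ and $X = Y - (Y-X)$ yields both $b \leq a + c$ and $a \leq b + c$, that is $c - a \leq b$ and $a - c \leq b$, hence $|a-c| \leq b$. Substituting back through the chain of equivalences gives \eqref{corcorb}.

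The computation is elementary, and the only point requiring care is the reduction step: dividing by $2ab$ is legitimate precisely because the nonconstant hypotheses force $a,b>0$. The sole conceptual obstacle is recognizing that the stated bound is the $L^2$ triangle inequality in disguise --- once the covariance is removed by the polarization identity, the whole claim collapses to $b \geq |a-c|$.
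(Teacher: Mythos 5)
Your proof is correct in substance but takes a genuinely different route from the paper's, and it contains one small mis-step worth fixing. The paper argues directly: it splits the covariance bilinearly as $\mathrm{Cov}(X,Y) = \V[X] + \mathrm{Cov}(X, Y-X)$, divides by $\sqrt{\V[X]\V[Y]}$, and invokes $\cor(X, Y-X) \geq -1$ (Cauchy--Schwarz) on the cross term, which yields \eqref{corcorb} in a few lines with no case analysis. You instead eliminate the covariance via polarization, obtain the closed form $\cor(X,Y) = (a^2+b^2-c^2)/(2ab)$, and reduce the claim algebraically to $b \geq |a-c|$, which you then deduce from the triangle inequality for the standard-deviation seminorm. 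Both arguments rest on the same underlying $L^2$ geometry, but yours has one concrete advantage: it never divides by $\sqrt{\V[Y-X]}$, so the degenerate case where $Y-X$ is constant (in which $\cor(X,Y-X)$ in the paper's display is undefined) is handled automatically, whereas the paper's computation strictly speaking needs a separate remark there. The paper's proof, in exchange, is shorter and avoids the two-sided bound with the absolute value.

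The mis-step is in the final triangle-inequality stage. To get $|a-c| \leq b$ you need both $a - c \leq b$ and $c - a \leq b$. Your decomposition $X = Y - (Y-X)$ correctly gives $a \leq b + c$, i.e.\ $a - c \leq b$. But your other decomposition, $Y = X + (Y-X)$, gives $b \leq a + c$, and this is \emph{not} equivalent to $c - a \leq b$: as a purely logical implication it fails (take $a=1$, $b=5$, $c=10$; then $b \leq a+c$ holds while $c - a \leq b$ does not). What you actually need is $c \leq a + b$, which follows from the triangle inequality applied to the third decomposition, $\widetilde{Y-X} = \widetilde{Y} - \widetilde{X}$, giving $c = \|\widetilde{Y}-\widetilde{X}\|_{L^2} \leq b + a$. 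With that one substitution your chain of equivalences closes and the proof is complete; the needed inequality comes from the same tool, just applied to the correct decomposition.
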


\begin{proof}
It follows from direct computation that 
\begin{equation}
\begin{aligned}
\cor(X, Y) 
&= \frac{\E[XY]-\E[X]\E[Y]}{\sqrt{\V[X]\V[Y]}} \\
& = \frac{\E[X^2]-\E[X]^2}{\sqrt{\V[X]\V[Y]}}+\frac{\E[X(Y-X)]-\E[X]\E[Y-X]}{\sqrt{\V[X]\V[Y]}}\\
& = \sqrt{\frac{\V[X]}{\V[Y]}} + \cor(X, Y-X)\sqrt{\frac{\V[Y-X]}{\V[Y]}}\\
&\geq \sqrt{\frac{\V[X]}{\V[Y]}} - \sqrt{\frac{\V[Y-X]}{\V[Y]}},
\end{aligned}
\end{equation}
where the last inequality uses $\cor(X, Y-X)\geq -1$. 
\end{proof}

\begin{lemma}\label{lem:gaussian}
Let $\bm\xi\sim\mathcal N(\bm 0, \bm I_n)$ be a standard Gaussian vector in $\R^n$. 
For any $\bm w, \bm z\in\R^n$, 
\begin{equation}
	\E[\langle\bm w, \bm\xi\rangle^2\langle\bm z, \bm\xi\rangle^2] = 2\langle\bm w, \bm z\rangle^2+\|\bm w\|_2^2\|\bm z\|_2^2.\label{mycal}
\end{equation}
\end{lemma}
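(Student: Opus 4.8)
The plan is to reduce the $n$-dimensional fourth moment to a statement about the pair of scalar Gaussians $U \defeq \langle \bm w, \bm\xi\rangle$ and $V \defeq \langle \bm z, \bm\xi\rangle$. Since $\bm\xi \sim \mathcal N(\bm 0, \bm I_n)$, the vector $(U,V)$ is centered jointly Gaussian with $\E[U^2] = \|\bm w\|_2^2$, $\E[V^2] = \|\bm z\|_2^2$, and $\E[UV] = \langle \bm w, \bm z\rangle$. The left-hand side of \eqref{mycal} is exactly $\E[U^2 V^2]$, so the identity is the standard Gaussian fourth-moment (Isserlis/Wick) formula $\E[U^2V^2] = \E[U^2]\E[V^2] + 2\E[UV]^2$.

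To keep the argument self-contained I would derive this via an orthogonal decomposition rather than quoting Wick's theorem. Assuming $\bm w \neq \bm 0$ (the case $\bm w = \bm 0$ makes both sides vanish), write $\bm z = c\,\bm w + \bm z^\perp$ with $c \defeq \langle \bm w, \bm z\rangle / \|\bm w\|_2^2$ and $\langle \bm w, \bm z^\perp\rangle = 0$, so that $V = cU + W$ where $W \defeq \langle \bm z^\perp, \bm\xi\rangle$. Because $U$ and $W$ are uncorrelated and jointly Gaussian, they are independent; this is the structural fact that kills the cross terms.

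Expanding $\E[U^2 V^2] = \E[U^2(cU+W)^2] = c^2\E[U^4] + 2c\,\E[U^3]\E[W] + \E[U^2]\E[W^2]$ and using $\E[W] = 0$ together with the one-dimensional Gaussian moments $\E[U^2] = \|\bm w\|_2^2$ and $\E[U^4] = 3\|\bm w\|_2^4$ gives $\E[U^2V^2] = 3c^2\|\bm w\|_2^4 + \|\bm w\|_2^2\,\E[W^2]$. Finally $\E[W^2] = \|\bm z^\perp\|_2^2 = \|\bm z\|_2^2 - c^2\|\bm w\|_2^2$ and $c^2\|\bm w\|_2^4 = \langle \bm w, \bm z\rangle^2$, so substituting collapses the expression to $\|\bm w\|_2^2\|\bm z\|_2^2 + 2\langle \bm w, \bm z\rangle^2$, as claimed.

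There is no serious obstacle here; the only point needing care is the independence of $U$ and $W$ (used to discard the $\E[U^3 W]$ term), which rests on the fact that uncorrelated jointly Gaussian variables are independent. An equally short alternative would be to expand $\langle \bm w,\bm\xi\rangle^2\langle \bm z,\bm\xi\rangle^2 = \sum_{i,i',j,j'} w_i w_{i'} z_j z_{j'}\,\xi_i\xi_{i'}\xi_j\xi_{j'}$ and evaluate $\E[\xi_i\xi_{i'}\xi_j\xi_{j'}] = \delta_{ii'}\delta_{jj'} + \delta_{ij}\delta_{i'j'} + \delta_{ij'}\delta_{i'j}$ termwise; the three index-pairings reproduce $\|\bm w\|_2^2\|\bm z\|_2^2$ and two copies of $\langle \bm w, \bm z\rangle^2$, recovering the same result.
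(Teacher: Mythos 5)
Your proof is correct, and it takes a more self-contained route than the paper. Both arguments begin with the same reduction: the pair $(U,V) = (\langle \bm w,\bm\xi\rangle, \langle \bm z,\bm\xi\rangle)$ is centered jointly Gaussian with $\E[U^2]=\|\bm w\|_2^2$, $\E[V^2]=\|\bm z\|_2^2$, $\E[UV]=\langle\bm w,\bm z\rangle$, so the claim is the bivariate fourth-moment identity $\E[U^2V^2]=2\E[UV]^2+\E[U^2]\E[V^2]$. The paper stops there and concludes by directly citing Wick's formula for jointly Gaussian variables; you instead \emph{derive} that identity from scratch, decomposing $\bm z = c\,\bm w + \bm z^\perp$ so that $V = cU + W$ with $U$ and $W$ independent (uncorrelated jointly Gaussian), then expanding and using only $\E[U^4]=3\|\bm w\|_2^4$ and the Pythagorean relation $\E[W^2]=\|\bm z\|_2^2 - c^2\|\bm w\|_2^2$. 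Your algebra checks out, including the degenerate case $\bm w=\bm 0$ and the justification for discarding the cross term $\E[U^3W]$. What each approach buys: the paper's citation of Wick is shorter and generalizes immediately to higher moments and more factors, whereas your argument is elementary and self-contained, needing nothing beyond univariate Gaussian moments and the uncorrelated-implies-independent fact; your sketched alternative via the index-pairing identity $\E[\xi_i\xi_{i'}\xi_j\xi_{j'}]=\delta_{ii'}\delta_{jj'}+\delta_{ij}\delta_{i'j'}+\delta_{ij'}\delta_{i'j}$ is essentially a hands-on proof of the same Wick formula the paper invokes, so any of the three routes would serve.
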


\begin{proof}
The proof follows from a direct application of Wick's formula \cite{wick1950evaluation}. 
Denote $X_1 = \langle\bm w, \bm\xi\rangle$ and $X_2 = \langle\bm z, \bm\xi\rangle$. 
It is easy to verify that 
\begin{equation}
\begin{pmatrix}
X_1\\
X_2
\end{pmatrix}\sim\mathcal N(\bm 0, \bm K),
\;\;\;\; \bm K = \begin{pmatrix}
\|\bm w\|_2^2 & \langle \bm w, \bm z\rangle\\
 \langle \bm w, \bm z\rangle & \|\bm z\|_2^2
\end{pmatrix}.
\end{equation}
By Wick's formula, 
\begin{equation}
	\E[\langle\bm w, \bm\xi\rangle^2\langle\bm z, \bm\xi\rangle^2] = \E[X_1^2X_2^2] =  2\E[X_1X_2]^2 + \E[X_1^2]\E[X_2^2] = 2\langle\bm w, \bm z\rangle^2+\|\bm w\|_2^2\|\bm z\|_2^2.
\end{equation} 
\end{proof}

\begin{proof}[Proof of Theorem \ref{thm:Gaussian}]
Since correlation coefficients are scale-invariant, and both $\bm b$ and $\tilde{\bm b}$ are fixed, 
\begin{equation}
	\cor(\mu^2( \bm b, \bm S),\mu^2(\tilde{\bm b}, \bm S)) = \cor\left(\frac{r_{\bm S}^2(\bm A, \bm b)-r^2(\bm A, \bm b)}{\|\bm b\|_2^2}, \frac{r_{\bm S}^2(\bm A, \tilde{\bm b})-r^2(\bm A, \tilde{\bm b})}{\|\tilde{\bm b}\|_2^2}\right).
\end{equation}
Without loss of generality, we assume $\|\bm b\|_2 = \|\tilde{\bm b}\|_2 = 1$, so that $\bm b_\PP = \bm b$, $\tilde{\bm b}_\PP = \tilde{\bm b}$.  

Let 
\begin{equation}
\begin{aligned}
	&X = r_{\bm S}^2(\bm A, \bm b)-r^2(\bm A, \bm b) = \|(\bm S\bm Q)^\dagger\bm S\bm Q_\perp\bm Q^T_\perp\bm b\|_2^2\\
	& Y = r_{\bm S}^2(\bm A, \tilde{\bm b})-r^2(\bm A, \tilde{\bm b}) = \|(\bm S\bm Q)^\dagger\bm S\bm Q_\perp\bm Q^T_\perp\tilde{\bm b}\|_2^2.
\end{aligned}
\end{equation}
To apply Lemma \ref{lem:cor}, it suffices to estimate $\V[X]/\V[Y]$ and $\V[Y-X]/\V[Y]$.

First of all, due to the rotation invariance of joint Gaussians, 
\begin{equation}
\begin{aligned}
	\bm G_1 &\defeq \sqrt{m}\bm S\bm Q\in\R^{m\times d}, \\
	\bm G_2 &\defeq \sqrt{m}\bm S\bm Q_\perp\in\R^{m\times (N-d)}
\end{aligned}
\end{equation}
are independent Gaussian random matrices, i.e., $(\bm S\bm Q)^\dagger\bm S\bm Q_\perp\bm Q^T_\perp = \bm G_1^\dagger\bm G_2\bm Q^T_\perp$, and 
\begin{equation}
\begin{aligned}
	\E[X] 
	&= \E\left[\tr(\bm G_1^\dagger\bm G_2\bm Q^T_\perp\bm b\bm b^T\bm Q_\perp\bm G_2^T{\bm G_1^\dagger}^T)\right] \\
	&= \E\left[\tr(\bm G_1^\dagger\E[\bm G_2\bm Q^T_\perp\bm b\bm b^T\bm Q_\perp\bm G_2^T]{\bm G_1^\dagger}^T)\right] \\
	&= \|\bm Q^T_\perp\bm b\|_2^2\E\left[\tr(\bm G_1^\dagger{\bm G_1^\dagger}^T)\right]\\
	&=\|\bm Q^T_\perp\bm b\|_2^2\E\left[\tr((\bm G_1^T\bm G_1)^{-1})\right],
\end{aligned}
\end{equation}
where we have used that $\E[\bm G_2\bm Q^T_\perp\bm b\bm b^T\bm Q_\perp\bm G_2^T] = \|\bm Q^T_\perp\bm b\|_2^2\bm I_m$. 

Note $\bm G_1^T\bm G_1$ is a Wishart matrix with dimension $d$ and degrees of freedom $m$, i.e. $\bm W=\bm G_1^T\bm G_1\sim W_d(\bm I_d, m)$.
Consequently, $\E[\bm W^{-1}] = \frac{1}{m-d-1}\bm I_d$ if $m>d+1$, and 
\begin{equation} \label{eX}
	\E[X] = \|\bm Q^T_\perp\bm b\|_2^2\frac{d}{m-d-1}.
\end{equation}
Similarly,
\begin{equation} \label{eY}
	\E[Y] = \|\bm Q^T_\perp\tilde{\bm b}\|_2^2\frac{d}{m-d-1}.
\end{equation}
Note $\bm G_1^\dagger\bm G_2\bm Q^T_\perp\bm a \stackrel{\mathcal D}{=} \|\bm Q^T_\perp\bm a\|_2(\bm G_1^T\bm G_1)^{-1}\bm G_1^T\bm\xi$ for every $\bm a\in\R^N$, where $\bm\xi\sim\mathcal N(\bm 0, \bm I_m)$ is independent of $\bm G_1$. 
If we denote $\bm G = (\bm G_1^T\bm G_1)^{-1}\bm G_1^T$, with rows denoted by $\bm g_i, i\in [d]$, then
\begin{equation} \label{X4}
\begin{aligned}
	\E[X^2] 
	&= \E[\|\|\bm Q^T_\perp\bm b\|_2\bm G\bm\xi\|_2^4] \\
	&= \|\bm Q^T_\perp\bm b\|_2^4\E\left[\left(\sum_{i=1}^d\langle\bm g_i, \bm\xi\rangle^2\right)^2\right]\\
	&=  \|\bm Q^T_\perp\bm b\|_2^4\left(\sum_{i=1}^d\E[\langle\bm g_i, \bm\xi\rangle^4] + \sum_{i\neq j}\E[\langle\bm g_i, \bm\xi\rangle^2\langle\bm g_j, \bm\xi\rangle^2]\right)\\
	& \stackrel{\eqref{mycal}}{=} \|\bm Q^T_\perp\bm b\|_2^4\left(3\sum_{i=1}^d\E[\|\bm g_i\|_2^4] + \sum_{i\neq j} (2\E[\langle\bm g_i, \bm g_j\rangle^2]+\E[\|\bm g_i\|_2^2\|\bm g_j\|_2^2])\right)\\
	& = \|\bm Q^T_\perp\bm b\|_2^4\left(2\E[\|\bm G\bm G^T\|_F^2] + \E[\tr(\bm G\bm G^T)^2]\right)\\
	& = \|\bm Q^T_\perp\bm b\|_2^4\left(2\E[\|(\bm G_1^T\bm G_1)^{-1}\|_F^2] + \E[\tr((\bm G_1^T\bm G_1)^{-1})^2]\right). 
\end{aligned}
\end{equation}
To explicitly compute \eqref{X4}, we use the following moments formulas of inverse Wishart distributions \cite[Theorem 2.4.14]{kollo2005advanced}:
\begin{equation}
\begin{aligned}
	\E[\bm W^{-1}\bm W^{-1}] &= \left(\frac{d}{(m-d)(m-d-3)}+\frac{d}{(m-d)(m-d-1)(m-d-3)}\right)\bm I_d\\
	\text{Cov}(\bm W^{-1}_{ii}, \bm W^{-1}_{jj}) &= \frac{2+2(m-d-1)\delta_{ij}}{(m-d)(m-d-1)^2(m-d-3)}.
\end{aligned}
\end{equation}
Therefore,
\begin{equation}
	\E[\|(\bm G_1^T\bm G_1)^{-1}\|_F^2] 
	= \tr(\E[\bm W^{-1}\bm W^{-1}]) = \frac{d^2}{(m-d-1)(m-d-3)}\simeq\frac{d^2}{(m-d-1)^2}\\
\end{equation}
and
\begin{equation}
\begin{aligned}
	\E[\tr((\bm G_1^T\bm G_1)^{-1})^2] 
	&= \sum_{i,j\in [d]}\E[\bm W^{-1}_{ii}\bm W^{-1}_{jj}]\\
	& = \sum_{i,j\in [d]}(\text{Cov}(\bm W^{-1}_{ii}, \bm W^{-1}_{jj}) +\E[\bm W^{-1}_{ii}]\E[\bm W^{-1}_{jj}])\\
	& = \frac{d^2}{(m-d-1)^2} + \frac{2d}{(m-d-1)^2(m-d-3)} + \frac{2(d^2-d)}{(m-d)(m-d-1)^2(m-d-3)}\\
	&\simeq \frac{d^2}{(m-d-1)^2},
\end{aligned}
\end{equation}
where $a_m\simeq b_m$ if $\lim_{m\to\infty}a_m/b_m = 1$.
Substituting these back into \eqref{X4} yields
\begin{equation}
	\E[X^2]\simeq \|\bm Q^T_\perp\bm b\|_2^4\frac{3d^2}{(m-d-1)^2}.\label{X}
\end{equation}
Replacing $\bm b$ by $\tilde{\bm b}$ in the above computation gives a similar estimate for $\E[Y^2]$: 
\begin{equation}
	\E[Y^2]\simeq \|\bm Q^T_\perp\tilde{\bm b}\|_2^4\frac{3d^2}{(m-d-1)^2}.\label{Y}
\end{equation}
Combining \eqref{X}, \eqref{Y} with \eqref{eX} and \eqref{eY} produces
\begin{equation}
\begin{aligned}
	&\V[X]\simeq\|\bm Q^T_\perp\bm b\|_2^4\frac{2d^2}{(m-d-1)^2}, \\
	&\V[Y]\simeq \|\bm Q^T_\perp\tilde{\bm b}\|_2^4\frac{2d^2}{(m-d-1)^2},
\end{aligned}
\end{equation}
which implies
\begin{equation}	
	\frac{\V[X]}{\V[Y]}\simeq \frac{\|\bm Q^T_\perp\bm b\|_2^4}{\|\bm Q^T_\perp\tilde{\bm b}\|_2^4}.\label{ratio}
\end{equation}
On the other hand, using Cauchy--Schwarz inequality, 
Moreover, we have
\begin{equation} \label{X-Y}
\begin{aligned}
	\V[Y-X] 
	&\leq \E[(Y-X)^2] \\
	&= \E\big[ (X^{\frac{1}{2}} + Y^{\frac{1}{2}})^2 \cdot (X^{\frac{1}{2}} - Y^{\frac{1}{2}})^2 \big] \\
	&= \E\big[ (X^{\frac{1}{2}} + Y^{\frac{1}{2}})^2  \cdot (\|\bm G_1^\dagger\bm G_2\bm Q^T_\perp\bm b\|_2 - \|\bm G_1^\dagger\bm G_2\bm Q^T_\perp\tilde{\bm b}\|_2)^2 \big] \\
	&\leq  \E[(X^{\frac{1}{2}} + Y^{\frac{1}{2}})^2 \cdot \|\bm G_1^\dagger\bm G_2\bm Q^T_\perp(\bm b\pm\tilde{\bm b})\|_2^2],
\end{aligned}
\end{equation}
where the last inequality follows from the reverse triangle inequality.
Furthermore, using the inequality of arithmetic and geometric means followed by the Cauchy--Schwarz inequality, we have
\begin{equation} \label{X-Y-2}
\begin{aligned}
	&\E[(X^{\frac{1}{2}} + Y^{\frac{1}{2}})^2 \cdot \|\bm G_1^\dagger\bm G_2\bm Q^T_\perp(\bm b\pm\tilde{\bm b})\|_2^2 \\
	&\leq  2\E[(X+Y) \cdot \|\bm G_1^\dagger\bm G_2\bm Q^T_\perp(\bm b\pm\tilde{\bm b})\|_2^2]\\
	&= 2 \E[X \cdot \|\bm G_1^\dagger\bm G_2\bm Q^T_\perp(\bm b\pm\tilde{\bm b})\|_2^2]  + 2 \E[Y \cdot \|\bm G_1^\dagger\bm G_2\bm Q^T_\perp(\bm b\pm\tilde{\bm b})\|_2^2] \\
	&\leq 2\sqrt{\E[X^2] \cdot \E[\|\bm G_1^\dagger\bm G_2\bm Q^T_\perp(\bm b\pm\tilde{\bm b})\|_2^4]} + 2\sqrt{\E[Y^2] \cdot \E[\|\bm G_1^\dagger\bm G_2\bm Q^T_\perp(\bm b\pm\tilde{\bm b})\|_2^4]}.
\end{aligned}
\end{equation}
Combining \eqref{X-Y} and \eqref{X-Y-2} yields
\begin{equation} \label{eq:V-bound}
	\V[Y-X] 
	\leq 2\sqrt{\E[X^2] \cdot \E[\|\bm G_1^\dagger\bm G_2\bm Q^T_\perp(\bm b\pm\tilde{\bm b})\|_2^4]} 
	+ 2\sqrt{\E[Y^2] \cdot \E[\|\bm G_1^\dagger\bm G_2\bm Q^T_\perp(\bm b\pm\tilde{\bm b})\|_2^4]}.
\end{equation}
A similar argument as \eqref{X} shows that
\begin{equation}
	\E[\|\bm G_1^\dagger\bm G_2\bm Q^T_\perp(\bm b\pm\tilde{\bm b})\|_2^4]\simeq \|\bm Q^T_\perp(\bm b\pm\tilde{\bm b})\|_2^4\frac{3d^2}{(m-d-1)^2}.\label{Sim}
\end{equation}
Plugging \eqref{Sim} into \eqref{eq:V-bound} together with the previous estimates yields that, asymptotically, 
\begin{equation}
	\frac{\V[Y-X]}{\V[Y]} \leq \frac{2\left(\|\bm Q^T_\perp\bm b\|_2^2+\|\bm Q^T_\perp\tilde{\bm b}\|_2^2\right)\|\bm Q^T_\perp(\bm b\pm\tilde{\bm b})\|_2^2}{\|\bm Q^T_\perp\tilde{\bm b}\|_2^4}\cdot\frac{3d^2}{2d^2}\leq \frac{6\|\bm Q^T_\perp(\bm b\pm\tilde{\bm b})\|_2^2}{\|\bm Q^T_\perp\tilde{\bm b}\|_2^4}\label{ratio2},
\end{equation}
where the last inequality follows from $\|\bm b\|_2 = \|\tilde{\bm b}\|_2 = 1$.
Appealing to Lemma \ref{lem:cor}, 
\begin{equation}
	\liminf_{m\to\infty}\cor(X, Y)\geq \frac{\|\bm Q^T_\perp\bm b\|_2^2-\sqrt{6}\min\{\|\bm Q^T_\perp(\bm b\pm\tilde{\bm b})\|_2\}}{\|\bm Q^T_\perp\tilde{\bm b}\|_2^2}.
\end{equation}
\eqref{mybound} follows by noting $\|\bm Q_\perp^T\bm a\|_2 = \|\bm P_{\bm Q_\perp}\bm a\|_2$ for $\bm a\in\R^N$.

To prove \eqref{mybound1}, we use Proposition \ref{prop:cor} (i.e. \eqref{bbc}) to lower bound $\|\bm Q^T_\perp\tilde{\bm b}\|_2^2$:
\begin{equation}
	\varphi \leq \|\bm Q^T_\perp\tilde{\bm b}\|_2 +  \kappa\Longrightarrow (\varphi - \kappa)^2\leq\|\bm Q^T_\perp\tilde{\bm b}\|_2^2\leq\|\tilde{\bm b}\|^2_2=1.
\end{equation}
Also, $\|\bm Q^T_\perp\bm b\|_2^2= 1-\kappa^2$ and
\begin{equation}
	\min\{\|\bm Q^T_\perp(\bm b\pm\tilde{\bm b})\|_2\}\leq\min\{\|\bm b\pm\tilde{\bm b}\|_2\} = \sqrt{2-2\varphi}.
\end{equation}
Hence, 
\begin{equation}
	\liminf_{m\to\infty}\cor(X, Y)\geq (1-\kappa^2)-\frac{\sqrt{12(1-\varphi)}}{(\varphi-\kappa)^2},
\end{equation}
completing the proof. 
\end{proof}

\section{Proof of Theorem \ref{thm:sketches}}\label{app:a0}

We first prove the case of the sub-Gaussian sketches.  
According to Lemma \ref{lem:Drineas2011}, it suffices to verify the conditions \eqref{mycond}.

Note $\sqrt{m}\bm S\bm Q\in\R^{m\times d}$ is a random matrix whose rows are i.i.d.\ isotropic random vectors in $\R^d$, with the sub-Gaussian norm $\lesssim K$ (this follows from Definition~3.4.1 and Proposition~2.6.1 in \cite{vershynin2018HighdimensionalProbability}).
Applying \citep[Theorem 4.6.1]{vershynin2018HighdimensionalProbability} to the matrix $\sqrt{m} \bs{S} \bs{Q}$ and using the fact that $\sigma_{\min}(\sqrt{m} \bs{S} \bs{Q}) = \sqrt{m} \sigma_{\min}(\bs{S} \bs{Q})$, we find that if $m\gtrsim K^4d\log\left(4L/\delta\right)$, then with probability at least $1-\delta/(2L)$, the first condition in \eqref{mycond} is satisfied.

For the second condition in \eqref{mycond}, we write the $i$-th component of $\bm Q^T\bm S^T\bm S\bm h$ as
\begin{equation} \label{eq:sum-of-subexponentials}
	\bm q_i^T\bm S^T\bm S\bm h 
	= \frac{1}{m}\sum_{j\in [m]} \langle\sqrt{m}\bm s_j, \bm q_i\rangle \langle\sqrt{m}\bm s_j, \bm h\rangle, \;\;\;\; i\in [d].
\end{equation}
Both $\langle\sqrt{m}\bm s_j, \bm q_i\rangle$ and $\langle\sqrt{m}\bm s_j, \bm h\rangle$ are sub-Gaussian random variables \citep[Proposition~2.6.1]{vershynin2018HighdimensionalProbability}.
Therefore, 
\begin{equation}
		\|\langle\sqrt{m}\bm s_j, \bm q_i\rangle\langle\sqrt{m}\bm s_j, \bm h\rangle\|_{\psi_1} 
		\leq \|\langle\sqrt{m}\bm s_j, \bm q_i\rangle\|_{\psi_2}\|\langle\sqrt{m}\bm s_j, \bm h\rangle\|_{\psi_2}
		\leq \|\sqrt{m}\bm s_j\|_{\psi_2}^2
		\lesssim K^2, 
\end{equation}
where the first inequality follows from \citep[Lemma~2.7.7]{vershynin2018HighdimensionalProbability}, 
the second inequality follows from \citep[Definition~3.4.1]{vershynin2018HighdimensionalProbability},
and the final inequality follows from an application of \citep[Proposition~2.6.1]{vershynin2018HighdimensionalProbability}.
Moreover, since $\bs{h} \perp \range(\bs{Q})$ it is easy to verify that the summands in \eqref{eq:sum-of-subexponentials} are all zero-mean.
By Bernstein's inequality \cite[Corollary 2.8.3]{vershynin2018HighdimensionalProbability}, if $m\gtrsim K^4d\log (4dL/\delta)/\e$, with probability at least $1-\delta/(2dL)$, $|\bm q_i^T\bm S^T\bm S\bm h|\leq\sqrt{\e/(2d)}$.
Taking a union bound over $i \in [d]$ yields that, with probability at least $1-\delta/(2L)$,
\begin{equation} \label{eq:bound-per-term}
	\max_{i\in [d]}|\bm q_i^T\bm S^T\bm S\bm h|\leq\sqrt{\frac{\e}{2d}}.
\end{equation}
Note that \eqref{eq:bound-per-term} implies $\|\bm Q^T\bm S^T\bm S\bm h\|_2^2\leq\frac{\e}{2}$.
Consequently, combining the results we have that there exists an absolute constant $C$, such that if $m\geq CK^4d\log (4dL/\delta)/\e$,
then with probability at least $1-\delta/L$, 
\begin{equation}
	\sigma_{\min}^2 (\bm S\bm Q) \geq \frac{\sqrt{2}}{2} \;\;\;\; \text{and} \;\;\;\; \|\bm Q^T\bm S^T\bm S\bm h\|_2^2\leq\frac{\e}{2},
\end{equation}  
which are the conditions in \eqref{mycond}.
This completes the proof for the sub-Gaussian sketch. 

We next prove the case for the leverage score sampling matrices, and the proof is again based on Lemma \ref{lem:Drineas2011}. 
Note that leverage score sampling can be viewed as a special case of induced measure sampling.
The first condition in \eqref{mycond} is implied by $\|\bm Q^T\bm S^T\bm S\bm Q-\bm I\|_2\leq 1-\frac{\sqrt{2}}{2}$, which, according to  \cite[Lemma A.1]{malik2022FastAlgorithms}, is satisfied with probability at least $1-\delta/2L$ if $m\geq 35 d\log(4dL/\delta)$. 
For the second condition in \eqref{mycond}, the only difference is that one uses Markov's inequality in place of Bernstein's inequality due to the lack of information on the tail of $\bm q_i^T\bm S^T\bm S\bm h$, and the details are omitted.
Under additional assumptions in \eqref{pang}, Markov's inequality can be replaced by Hoeffding's inequality to yield an improved bound \eqref{lop_more}:
\begin{equation}
	\bm q_i^T\bm S^T\bm S\bm h = \frac{1}{m}\sum_{j\in [m]}\langle\sqrt{m}\bm s_j, \bm q_i\rangle\langle\sqrt{m}\bm s_j, \bm h\rangle, \;\;\;\; i\in [d],
\end{equation} 
with each summand $\langle\sqrt{m}\bm s_j, \bm q_i\rangle\langle\sqrt{m}\bm s_j, \bm h\rangle$ centered and bounded as
\begin{equation}
	|\langle\sqrt{m}\bm s_j, \bm q_i\rangle\langle\sqrt{m}\bm s_j, \bm h\rangle|\leq \max_{i\in [d]}\max_{j\in [N]: \ell_j>0}\frac{r| q_{ij} h_j|}{\ell_j} \leq \max_{i\in [d]}\max_{j\in [N]: \ell_j>0}\frac{d| q_{ij} h_j|}{\ell_j}\leq C,
\end{equation}
where $r$ is the rank of $\bm A$. 
By Hoeffding's inequality, for $t>0$, 
\begin{equation}
	\P\left(|\bm q_i^T\bm S^T\bm S\bm h|\leq t\right)\geq 1- 2\exp (-\frac{mt^2}{2C^2}).
\end{equation}
Setting $t = \sqrt{\e/2d}$ and taking a union bound over $i$ yields that, for $m\geq 4C^2d\log (4dL/\delta)/\e$, with probability at least $1-\delta/2L$, $\|\bm Q^T\bm S^T\bm S\bm h\|_2^2\leq\e/2$.

\printbibliography

\end{document}